\DeclareMathOperator{\Li}{Li}
\newcommand{\Zb}{\mathbb{Z}}
\newcommand{\Cb}{\mathbb{C}}
\newcommand{\z}{\zeta}
\newcommand{\onen}[1]{\{1\}^{#1}}
\numberwithin{equation}{section}
\newtheorem{theorem}{Theorem}
\newtheorem{proposition}{Proposition}
\newtheorem{corollary}{Corollary}
\newtheorem{lemma}{Lemma}
\theoremstyle{remark}
\newtheorem{example}{Example}
\dedicatory{}
\begin{document}
	\initclock
	\title[]{Mordell-Tornheim multiple zeta-functions, their integral analogues, and relations among multiple polylogarithms}
	\author[Kohji Matsumoto]{Kohji Matsumoto}
	\address[Kohji Matsumoto]{Graduate School of Mathematics, Nagoya University, Furocho, Chikusa-ku, Nagoya 464-8602, Japan and Center for General Education, Aichi Institute of Technology, 1247 Yachigusa,
	Yakusa-cho, Toyota 470-0392, Japan}
	\email{kohjimat@math.nagoya-u.ac.jp}
	\author{Kazuhiro Onodera}
	\address[Kazuhiro Onodera]{Division of Mathematics, Chiba Institute of Technology,
	2-1-1 Shibazono, Narashino, Chiba 275-0023, Japan}
	\email{onodera@p.chibakoudai.jp}
	\author[Dilip K. Sahoo]{Dilip K. Sahoo}
	\address[Dilip K. Sahoo] {Harish-Chandra Research Institute Chhatnag Road, Jhunsi, Prayagraj(Allahabad) 211019, India}
	\email{mzfdilipmath@gmail.com}
	\subjclass[2020]{11M32.}
	\keywords{Mordell-Tornheim type multiple series, asymptotic behavior, Abel's
	summation formula, multiple polylogarithms}
	\begin{abstract}
We study the asymptotic behavior of a multiple series of Mordell-Tornheim type and its integral analogue. We show a relation between the multiple series and its integral analogue and to deeply investigate the behavior of the integral analogue. In the process of study, we obtain interesting nontrivial relations among multiple polylogarithms. Particularly we express certain finite linear combination of multiple polylogarithms in terms of logarithms and Riemann zeta values.
	\end{abstract}
	\maketitle
\section{Introduction and statement of results}

Let $r$ be a positive integer, $x$ be a real variable,
$a$ be a non-negative real number, 
$\bm{\omega}=(\omega_1,\omega_2,\ldots,\omega_r)$ be an $r$-tuple of positive real numbers,
$\bm{k}=(k_1,k_2,\ldots,k_r)$ be an $r$-tuple of positive integers,
and $\bm{z}=(z_1,z_2,\ldots,z_r)$ be an $r$-tuple of complex numbers with $|z_i|<1$.

The main objects of the present article are the multiple series
\begin{equation}
\label{function M}
M_r(x;\bm{\omega},a)
=\sum_{n_1,n_2,\ldots,n_r\ge 1}
\dfrac{1}{n_1n_2\cdots n_r(\omega_1n_1+\omega_2n_2+\cdots+\omega_r n_r+a)^x},
\end{equation}
its integral analogue
\begin{equation}
\label{def of I}
I_r(x;\bm{\omega}, a)
=\int_1^\infty \int_1^\infty \cdots \int_1^\infty
\dfrac{dt_1dt_2\cdots dt_r}{t_1t_2\cdots t_r(\omega_1 t_1+\omega_2t_2+\cdots +\omega_r t_r+a)^x},
\end{equation}
and the multiple polylogarithm defined as
\begin{equation}\label{def of MPL}
\Li_{\bm{k}} (\bm{z})
=\sum_{1\le  n_1<n_2<\cdots <n_r}
\dfrac{z_1^{n_1}z_2^{n_2}\cdots z_r^{n_r}}
{n_1^{k_1}n_2^{k_2}\cdots n_r^{k_r}}.
\end{equation}

We first study the behavior of
\eqref{function M} and \eqref{def of I} around $x=0$, and then discuss applications to
\eqref{def of MPL}.
As we will see later (Propositions \ref{convergence for I} and \ref{convergence}),
 \eqref{function M} and \eqref{def of I} are convergent only when $x>0$.
Therefore hereafter we assume $x>0$.

Historically, Mordell \cite{Mord58}
first introduced and studied the multiple series in the case when $x=1$ and $\bm{\omega}=(1,1,\ldots,1)$, 
and then Hoffman \cite{Hoff92} treated the case when $x$ is any positive integer, $\bm{\omega}=(1,1,\ldots,1)$, and $a=0$, under
the name of multiple harmonic series.    Lastly, combining this type of series by
Mordell and Hoffman with the double series
$\sum_{m,n\geq 1}m^{-u}n^{-v}(m+n)^{-w}$
($u,v,w$ are positive integers) studied by Tornheim \cite{Torn50}, 
the first-named author \cite{Mats03} introduced the multi-variable multiple zeta-function
\begin{align}
\label{MT-series}
\zeta_{MT,r}(s_1,\ldots,s_r,s_{r+1})=\sum_{n_1,\ldots,n_r\geq 1}
\frac{1}{n_1^{s_1}\cdots n_r^{s_r}(n_1+\cdots +n_r)^{s_{r+1}}},
\end{align}
where $s_1,\ldots,s_r,s_{r+1}$ are complex variables,
and called it the Mordell-Tornheim $r$-fold zeta-function.    Obviously,
$M_r(x;(1,1,\ldots,1),0)=\zeta_{MT,r}(1,\ldots,1,x)$.

In \cite{Mats03}, it has been proved that \eqref{MT-series} can be continued
meromorphically to the whole complex space $\mathbb{C}^{r+1}$ of dimension $r+1$.
Then various analytic properties of \eqref{MT-series} were established by several
subsequent papers, such as functional relations (Tsumura \cite{Tsum07},
Nakamura \cite{Naka06} and the second-named author \cite{Onod14}), the functional equation
(Okamoto and Onozuka \cite{OkOn16}), evaluation of special values at
non-positive integer points (Komori \cite{Komo08}, the second-named author \cite{Onod13}, \cite{Onod18},
\cite{Onod21}), and so on.

In \cite{Mats03}, possible singularities of \eqref{MT-series} have also
been listed up.    Moreover in \cite{MNOT08} it has been shown that all of those possible
singularities are indeed true singularities.    In particular, 
$s_1+s_2+\cdots+s_{r+1}=r$ is known to be a singular hyperplane.
Since the point $(1,1,\ldots,1,0)$ is on this hyperplane, the behavior of $M_r(x;(1,1,\ldots,1),0)$ 
should be singular when $x$ is close to $0$.
For example, in the simplest case,
\begin{equation}
\label{classical case}
M_1(x;(1),0)=\sum_{n=1}^\infty \dfrac{1}{n^{1+x}}=\dfrac{1}{x}+\gamma+O(x),
\end{equation}
where $\gamma$ is the Euler constant.
This is a classical result that can be shown by the Abel summation formula.
The main purpose of this article is to reveal the behavior of $M_r(x;\bm{\omega},a)$ around $x=0$ in the general case. As a special case of our result, we provide an asymptotic formula for $M_r(x;(1,1,\ldots,1),0)$ around $x=0$.

In this article, in the same way as the proof of \eqref{classical case}, 
we use the Abel summation formula to clarify the behavior of $M_r(x;\bm{\omega},a)$.
This naturally leads to integrals of the form \eqref{def of I}
and reduce our problem to understanding the behavior of $I_r(x;\bm{\omega},a)$ around $x=0$.
The first main result reveals this behavior up to the constant term.

\begin{theorem}
\label{asymptotic for I}
\begin{equation}
\label{eq:asymptotic for I}
I_r(x;\bm{\omega},a)
=\dfrac{e^{-\gamma x}}{\Gamma(x+1)}
\sum_{k=0}^r
(-1)^k\Lambda_k(\bm{\omega})
\dfrac{(r-k)!}{x^{r-k}}
+O(x),
\end{equation}
where
$\Gamma(x)$ is the gamma function,
$\Lambda_0(\bm{\omega})=1$, and
\[
\Lambda_k(\bm{\omega})=\sum_{1\le i_1<\cdots <i_k\le r}
\log \omega_{i_1} \cdots \log \omega_{i_k}
\]
for any positive integer $k$.
\end{theorem}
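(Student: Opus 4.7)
The plan is to convert the multiple integral defining $I_r$ into a one-dimensional Mellin-type integral and then read off the pole structure at $x=0$ from the behavior of the integrand near zero. I apply the identity $b^{-x}=\Gamma(x)^{-1}\int_0^\infty v^{x-1}e^{-vb}\,dv$ to the factor $(\omega_1 t_1+\cdots+\omega_r t_r+a)^{-x}$ in \eqref{def of I}, exchange integration orders (justified by absolute convergence for $x>0$), and recognize the inner $t_i$-integrals as exponential integrals, obtaining
\[
\Gamma(x)\,I_r(x;\bm{\omega},a) \;=\; \int_0^\infty v^{x-1} e^{-va} \prod_{i=1}^r E_1(\omega_i v)\,dv, \qquad E_1(y) := \int_1^\infty \frac{e^{-yt}}{t}\,dt.
\]

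I split this integral at $v=1$. The tail $\int_1^\infty$ is entire in $x$ since $E_1$ decays exponentially at infinity, hence it contributes only an analytic function at $x=0$. On $(0,1]$ I use the classical expansion $E_1(y)=-\gamma-\log y+h(y)$ with $h$ entire and $h(y)=O(y)$, so that, on setting $L(v):=-\gamma-\log v$ and $\ell_i:=\log\omega_i$,
\[
\prod_{i=1}^r E_1(\omega_i v) = P(v) + \mathcal{E}(v), \qquad P(v) := \prod_{i=1}^r \bigl(L(v)-\ell_i\bigr) = \sum_{k=0}^r (-1)^k \Lambda_k(\bm{\omega})\, L(v)^{r-k},
\]
with $\mathcal{E}(v)=O(v(\log v)^{r-1})$ as $v\to 0^+$. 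Combined with $e^{-va}=1+O(v)$, the contributions of $\mathcal{E}(v)$ and of $(e^{-va}-1)P(v)$ produce integrals of the form $\int_0^1 v^{x-1}\cdot O(v(\log v)^r)\,dv$, which are analytic in $x$ for $x>-1$.

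The crux is the identity
\[
\int_0^1 v^{x-1} L(v)^m\,dv \;=\; \frac{m!\,e^{-\gamma x}}{x^{m+1}} + \tilde{R}_m(x),
\]
with $\tilde{R}_m$ entire. I would establish it by expanding $L(v)^m=\sum_{j=0}^m\binom{m}{j}(-\gamma)^{m-j}(-\log v)^j$, using the standard $\int_0^1 v^{x-1}(-\log v)^j\,dv=j!/x^{j+1}$, and identifying the resulting expression $(m!/x^{m+1})\sum_{i=0}^m(-\gamma x)^i/i!$ as the degree-$m$ truncation of $m!\,e^{-\gamma x}/x^{m+1}$; the discarded tail is $O(x^{m+1})$, so upon division by $x^{m+1}$ the remainder is entire. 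Summing over $k$ with weights $(-1)^k\Lambda_k(\bm{\omega})$ and dividing by $\Gamma(x)=\Gamma(x+1)/x$ produces
\[
I_r(x;\bm{\omega},a) \;=\; \frac{e^{-\gamma x}}{\Gamma(x+1)}\sum_{k=0}^r (-1)^k \Lambda_k(\bm{\omega})\,\frac{(r-k)!}{x^{r-k}} + \frac{B(x)}{\Gamma(x)},
\]
with $B$ analytic at $x=0$; since $1/\Gamma(x)=x+O(x^2)$, the last term is $O(x)$, as claimed.

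The main obstacle is the key integral identity and the bookkeeping needed to confirm that every ``error'' term produces only analytic-in-$x$ contributions. Once the Mellin representation and the splitting at $v=1$ are in place, the rest of the argument is essentially algebraic; in particular, the logarithmic growth of $P(v)$ at infinity is harmless because $P$ is only used on the bounded interval $(0,1]$, where the small-argument expansion of $E_1$ is valid.
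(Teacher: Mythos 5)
Your proposal is correct and follows essentially the same route as the paper's proof: the Mellin representation $\Gamma(x)I_r=\int_0^\infty v^{x-1}e^{-av}\prod_i\Gamma(0,\omega_i v)\,dv$, the split at $v=1$ with exponential decay handling the tail, the expansion $\Gamma(0,y)=-\gamma-\log y+O(y)$ on $(0,1]$, and the evaluation of $\int_0^1 v^{x-1}(\gamma+\log v)^m\,dv$ as $(-1)^m m!\,e^{-\gamma x}/x^{m+1}+O(1)$. Your direct binomial expansion of $L(v)^m$ together with $\int_0^1 v^{x-1}(-\log v)^j\,dv=j!/x^{j+1}$ is just an explicit version of the paper's differentiation-under-the-integral computation of the same quantity, so there is nothing substantively different to compare.
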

Since the factor $e^{-\gamma x}/\Gamma(x+1)$ can be explicitly expanded as a Taylor series around  $x=0$, the main term on the right-hand side of \eqref{eq:asymptotic for I} can be expressed
as a polynomial in $1/x$.
For instance,
\begin{align*}
I_1(x;\omega,a)
&=\dfrac1x-\log \omega +O(x),\\
I_2(x;\bm{\omega},a)
&=\dfrac{2}{x^2}-\dfrac{\log (\omega_1\omega_2)}{x}
+\log \omega_1 \log \omega_2 -\dfrac{\pi^2}{6}+O(x).
\end{align*}
Remark that $\gamma$ does not appear in these asymptotic formulas.
This  phenomenon also occurs in the general case,
as the coefficients of the Taylor expansion of $e^{-\gamma x}/\Gamma(x+1)$ do not contain $\gamma$ (see \eqref{power series for Gamma 1}).

An asymptotic formula of $M_r(x;\bm{\omega},a)$ can be derived from Theorem \ref{asymptotic for I}. 

\begin{theorem}
\label{asymptotic for M}
\begin{equation}
\label{eq:asymptotic for M}
M_r(x;\bm{\omega},a)
=\dfrac{1}{\Gamma(x+1)}
\sum_{k=0}^r (-1)^k\Lambda_k(\bm{\omega}) \dfrac{(r-k)!}{x^{r-k}}+O(x).
\end{equation}
In particular,
\[
M_r(x;(1,1,\ldots,1),0)
=\dfrac{1}{\Gamma(x+1)} \dfrac{r!}{x^r}+O(x).
\]
\end{theorem}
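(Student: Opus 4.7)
The plan is to connect $M_r$ to $I_r$ via Abel's summation formula and then invoke Theorem~\ref{asymptotic for I}. The driving observation is that $e^{\gamma x}\cdot\bigl(e^{-\gamma x}/\Gamma(x+1)\bigr)=1/\Gamma(x+1)$, so multiplying the right-hand side of \eqref{eq:asymptotic for I} by $e^{\gamma x}$ already reproduces the right-hand side of \eqref{eq:asymptotic for M}. It therefore suffices to establish the relation
\[
M_r(x;\bm{\omega},a)=e^{\gamma x}\,I_r(x;\bm{\omega},a)+O(x)\qquad (x\to 0^{+}),
\]
which in the case $r=1$ is just a reformulation of the classical identity \eqref{classical case}.

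To prove this for general $r$, I would apply Abel's summation formula successively in each of the $r$ discrete variables. Using the classical asymptotic $\sum_{n\le t}1/n=\log t+\gamma+O(1/t)$ together with one integration by parts, the inner sum on $n_1$ (with $C=\sum_{j\ge 2}\omega_j n_j+a$ held fixed) becomes
\[
\sum_{n_1=1}^{\infty}\frac{1}{n_1(\omega_1 n_1+C)^x}=\int_{1}^{\infty}\frac{dt_1}{t_1(\omega_1 t_1+C)^x}+\frac{\gamma}{(\omega_1+C)^x}+\text{Err}.
\]
Iterating over all $r$ variables decomposes $M_r$ into a sum indexed by subsets $S\subseteq\{1,\ldots,r\}$: the indices in $S$ are frozen at $n_j=1$ (each contributing a factor $\gamma$ and shifting $a$ by $\omega_j$), while the remaining indices are replaced by integrals, producing an $(r-|S|)$-dimensional integral $J_S$ of the same shape as $I_{r-|S|}$. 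Substituting the asymptotics from Theorem~\ref{asymptotic for I} for each $J_S$ and regrouping, the weights $\gamma^{|S|}$ combine with the main parts of the $J_S$'s to reproduce $\sum_{k=0}^{r}(\gamma x)^{k}/k!\cdot I_r$ modulo $O(x)$, which is the truncated Taylor expansion of $e^{\gamma x}I_r$. Together with Theorem~\ref{asymptotic for I}, this yields \eqref{eq:asymptotic for M}; the special case $\bm{\omega}=(1,1,\ldots,1)$, $a=0$ is immediate because $\log 1=0$ forces $\Lambda_k(\bm{\omega})=0$ for every $k\ge 1$, leaving only the $k=0$ contribution $r!/x^r$.

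The main obstacle will be the uniform control of the errors accumulated by the iterated Abel summation. Each step produces a remainder arising from the $O(1/t)$ tail of $H(t)-(\log t+\gamma)$, and one must verify that after all subsequent summations and integrations this remainder stays $O(x)$ uniformly as $x\to 0^{+}$, while the boundary terms $H(N)(\omega_1 N+C)^{-x}$ vanish as $N\to\infty$ (which uses the positivity of $x$). A clean route is likely an induction on $r$, invoking the $(r-1)$-dimensional version of the asymptotic to absorb the error at each Abel step, with careful bookkeeping for the shifted parameters introduced by freezing variables at $n_j=1$ so that all implied constants remain uniform in $\bm{\omega}$ and $a$.
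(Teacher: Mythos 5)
Your proposal follows essentially the same route as the paper's first proof: iterated Abel summation yielding a decomposition of $M_r$ over subsets (the paper's Proposition~\ref{relation between M and I}, where the ``frozen'' indices contribute $\gamma$ and shift $a$ by the corresponding $\omega_j$, and the error is controlled by bounding integrals of the form $\omega_i x\, I_{|J|+1}(x+1;\cdot,\cdot)=O(x)$), followed by substituting Theorem~\ref{asymptotic for I} and recombining the $\gamma^{|S|}$ weights into the truncated exponential $\sum_{k}(\gamma x)^k/k!=e^{\gamma x}+O(x^{r-l+1})$ that cancels the $e^{-\gamma x}$ factor. The plan is sound and matches the paper's argument in all essential respects.
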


For example,
\begin{align*}
M_1(x;\omega,a)
&=\dfrac{1}{x}+\gamma-\log \omega+O(x),\\
M_2(x;\bm{\omega},a)
&=\dfrac{2}{x^2}+\dfrac{2\gamma -\log (\omega_1\omega_2)}{x}
+(\gamma-\log \omega_1)(\gamma-\log \omega_2)-\dfrac{\pi^2}{6}+O(x).
\end{align*}

The behavior of Mordell-Tornheim multiple zeta-functions around singularities has not yet been well
studied.
When $r=2$, Komori \cite[Theorem 3]{Komo08} proved such a result by using 
a certain contour integral argument, and another approach was developed by
the second named author \cite[Corollary 4.1]{Onod14}.
But it seems that both of their results do not cover the case $r=2$ of our result.
The known special cases of our result appear to be only the asymptotic formulas
for $M_1(x;\omega,a)$ and $M_2(x;\bm{\omega},0)$,
which have been essentially proved by Dixit, Sathyanarayana, and Sharan very recently
(see Theorem 1.1 and its proof in \cite{DSS24}). 
Theorem \ref{asymptotic for M} gives a kind of Laurent type expansion of
$\zeta_{MT,r}(1,\ldots,1,x)$ around the point $x=0$.   It is desirable to study the
behavior of Mordell-Tornheim (general $r$-fold) multiple zeta-functions around 
other singularity points.

Here we mention
another important class of multiple series, that is the Euler-Zagier multiple zeta-functions
defined by
\begin{align}
\label{EZ-series}
\zeta_{EZ,r}(s_1,s_2,\ldots,s_r)=\sum_{1\leq n_1<n_2<\cdots < n_r}\frac{1}
{n_1^{s_1}n_2^{s_2}\cdots n_r^{s_r}},
\end{align}
where $s_1,\ldots,s_r$ are complex variables.   This type of series has been studied 
more extensively.    It can be continued meromorphically to the whole space 
$\mathbb{C}^r$, and the list of all singular hyperplanes is known
(see Akiyama et al. \cite{AET01}).   
Almost all integer points are on one of those hyperplanes.
The behavior of $\zeta_{EZ,r}(s_1,\ldots,s_r)$ around such integer points was first
already studied in \cite{AET01}, and then discussed by many subsequent papers.
In particular, Laurent type expansions of $\zeta_{EZ,r}(s_1,\ldots,s_r)$ around
integer points have been investigated by Matsumoto et al. \cite{MOW20} and by Saha 
\cite{Saha22}.
In addition, here is just one of many results on $\zeta_{EZ,r}(s_1,\ldots,s_r)$.
In \cite[Proposition 4 (ii)]{AK99}, Arakawa and Kaneko proved the asymptotic formula
\begin{equation}
\label{eq:asymptotic for MZF}
\zeta_{EZ,r}(\underbrace{1,\ldots,1}_{r-1},x+1)
=\dfrac{1}{\Gamma(x+1)}\dfrac{1}{x^r}+O(x).
\end{equation}
This can also be shown from Theorem \ref{asymptotic for M}
by investigating the relation between $M_r(x;(1,\ldots,1),0)$ and $\zeta_{EZ,r}(1,\ldots,1,x+1)$
(see Sec.\ \ref{sec:asymptotic for M}).

Thanks to Theorems \ref{asymptotic for I} and \ref{asymptotic for M}, the behavior of $I_r(x;\bm{\omega},a)$ and $M_r(x;\bm{\omega},a)$ around $x=0$ can be described up to the constant term.
As a further study, it is natural to consider their complete asymptotic expansions.
In this paper, we address this problem for $I_r(x;\bm{\omega},a)$ using two methods.
To state the details, we introduce some notation.
Let $\Zb_{\ge 1}$ (resp.\ $\Zb_{\ge 0}$) denote the set of the positive (resp.\ non-negative) integers and let $R=\{1,2,\ldots,r\}$.
For $J=\{j_1,j_2,\ldots,j_s\}\subset R$ with $j_1<j_2<\cdots <j_s$, we set
\[
\bm{\omega}_J=(\omega_{j_1},\omega_{j_2},\ldots,\omega_{j_s})
\qquad
|\bm{\omega}_J|=\omega_{j_1}+\omega_{j_2}+\cdots +\omega_{j_s}
=\sum_{j\in J} \omega_j,
\]
where $\bm{\omega}_\emptyset$ and $|\bm{\omega}_\emptyset|$ are interpreted as $\emptyset$ and $0$, respectively.
Also, put $|\bm{\omega}|=|\bm{\omega}_R|$ for simplicity.
Further, we define  the numbers $a_{k,m}$ ($k,m=0,1,2,\ldots$) by
\begin{align}
	\label{v9 a(k,m) sequence}
	\dfrac{e^{-\gamma x}}{\Gamma(x+1)} \biggl(\dfrac{d}{d x} \biggr)^{k}
	\dfrac{\Gamma(x+1)}{e^{-\gamma x}}
	=\sum_{m=0}^\infty a_{k,m} x^m.
\end{align}
The above Taylor series is valid for $ 0<x<1 $.

Our first method refines the argument used in the proof of Theorem \ref{asymptotic for I},
and thereby yields the following more detailed result.

\begin{theorem}
	\label{v9 first complete asymptotic expansion of integral}
	For $ 0<x<1 $ and $ |\bm{\omega}|<a $, we have
	\begin{align*}
		a^x I_r(x;\bm{\omega},a)
		=\sum_{m=0}^\infty d_{r,m}(\bm{\omega},a) x^{m-r},
	\end{align*}
where  for $0\le m\le r$, 
\[
d_{r,m}(\bm{\omega},a)
=\sum_{j=0}^{m}
(r-j)!
\Lambda_j(\bm{\omega}/a)
\sum_{k=0}^{m-j}
(-1)^{j+k}\dfrac{a_{k,m-j-k}}{k!}
\]
and for $m>r$, 
\begin{align*}
	d_{r,m}(\bm{\omega},a)
	&=\sum_{j=0}^{r}
	(r-j)!
	\Lambda_j(\bm{\omega}/a)
	\sum_{k=0}^{r-j}
	(-1)^{j+k}\dfrac{a_{k,m-j-k}}{k!}\\
	& \quad +
	(-1)^r \sum_{\substack{A,B,C \\ C\ne \emptyset}}
	\biggl(\prod_{i\in A} \log \dfrac{\omega_i}{a}\biggr)
	\sum_{j=1}^{m-r+|B|}
	T_{|C|,j}\biggl(-\dfrac{\bm{\omega}_C}{a}\biggr)\\
	&\quad \times
	\sum_{k=\max\{0,j-m+r\}}^{\min\{|B|,j-1\}}
	\binom{|B|}{k} (j-k)_k a_{|B|-k,m-r-j+k}\,,
\end{align*}
where $A$, $B$, and $C$ are disjoint subsets of $R$ satisfying $A\sqcup B\sqcup C=R$, and $ T_{r,l}(\bm{\omega}) $ is defined in Proposition \ref{power series of S}.
	\end{theorem}
In Section \ref{sec:refinement}, we study these numbers $  a_{k,m} $ in detail and also compute the coefficients $ d_{r,m}(\bm{\omega},a) $ for particular instances (see Example \ref{v9 example for coefficients d}).
As an illustration, the following  complete asymptotic expansion of $a^x I_1(x;\omega,a)$ holds: 
If $0<\omega<a$ and $0<x<1$, then
\begin{equation}
\label{eq:I1-2}
a^x I_1(x;\omega,a)
=\dfrac1x+\log \dfrac{a}{\omega}
-\sum_{k=1}^\infty \Bigl\{
\Li _{\underbrace{\mbox{\scriptsize $1,\ldots,1$}}_{k-1},2} \Bigl(-\dfrac{\omega}{a}\Bigr)
+(-1)^{k+1}\zeta(k+1)
\Bigr\} x^k,
\end{equation}
where $\Li_{k_1,k_2,\ldots,k_l}(z)$ is the one-variable multiple polylogarithm defined as
\begin{equation}
\label{eq:def of one-variable MPL}
\Li_{k_1,k_2,\ldots,k_l}(z)
=\Li_{k_1,k_2,\ldots,k_l}(\underbrace{1,\ldots,1}_{l-1},z)
=\sum_{1\le n_1<n_2<\cdots <n_l} \dfrac{z^{n_l}}{n_1^{k_1}n_2^{k_2}\cdots n_l^{k_l}}
\end{equation}
for $z\in \Cb$ with $|z|<1$.
\par
Our second method of searching for complete asymptotic expansions is entirely different, using the multiple polylogarithm defined by \eqref{def of MPL}.
The result can be stated as follows.

\begin{theorem}
\label{power series for I}
If $0<x<1$, then
\begin{equation}
\label{eq:power series for I}
(a+|\bm{\omega}|)^x I_r(x;\bm{\omega},a)
=r! x^{-r}+\sum_{m=1}^\infty c_{r,m}(\bm{\omega},a) x^{m-r},
\end{equation}
where
\begin{align}
\label{eq:def of c}
&c_{r,m}(\bm{\omega},a)\\
&=\sum_{t=1}^{\min\{m,r\}}
(-1)^{m-t}
\sum_{s=1}^t
\sum_{\substack{\bm{k}\in (\Zb_{\ge 1})^s \\ k_1+\cdots+k_s=t}}
\sum_{\substack{\bm{l}\in (\Zb_{\ge 0})^s \\ l_1+\cdots+l_s=m-t}}
(r-t)!k_1!\cdots k_s!
\prod_{i=1}^s \binom{k_i+l_i-1}{l_i} \notag\\
&\quad
\times \sum_{K_1,\ldots,K_s}
\Li_{\bm{k}+\bm{l}}
\biggl(
\dfrac{a+|\bm{\omega}_{K_0}|}{a+|\bm{\omega}_{K_0\sqcup K_1}|},
\dfrac{a+|\bm{\omega}_{K_0\sqcup K_1}|}{a+|\bm{\omega}_{K_0\sqcup K_1\sqcup K_2}|},
\ldots,\dfrac{a+|\bm{\omega}_{K_0\sqcup \cdots \sqcup K_{s-1}}|}{a+|\bm{\omega}|}
\biggr) \notag
\end{align}
and $K_1,\ldots,K_s$ are disjoint subsets of $R$
satisfying $|K_i|=k_i$ $(i=1,\ldots,s)$
and  $K_0=R\setminus \bigsqcup_{i=1}^s K_i$.
\end{theorem}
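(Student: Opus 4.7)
The plan is to prove Theorem~\ref{power series for I} by iterating a one-variable integration identity $r$ times and then matching the resulting nested sum with the stated polylog expansion. The key base identity is
\[
\int_1^\infty \frac{dt}{t(\omega t + C)^x}
= \frac{1}{(\omega + C)^x}\left(\frac{1}{x} + \sum_{n=1}^\infty \frac{\beta^n}{x+n}\right), \qquad \beta = \frac{C}{\omega+C},
\]
obtained by the substitution $u = C/(\omega t + C)$ followed by geometric expansion of $1/(1-u)$. Equivalently the right-hand side is $G(x,\beta)/(\omega+C)^x$ with $G(x,\beta) := \int_0^1 y^{x-1}/(1-\beta y)\,dy$, and the expansion $1/(x+n) = \sum_{l\geq 0}(-x)^l/n^{l+1}$ gives $G(x,\beta) = 1/x + \sum_{m\geq 1}(-x)^{m-1}\Li_m(\beta)$.

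Next I would apply this identity successively to $t_1,\ldots,t_r$. At each step the integration offers either a \emph{polar branch} (the $1/x$ summand of $G$), which telescopes the base constant $a \mapsto a + \omega_j$, or a \emph{polylog branch} (the tail $\sum_n \beta^n/(x+n)$). For the polylog branch, the binomial identity $\beta_j^n = \sum_{k=0}^n \binom{n}{k}(-\omega_j)^k(\omega_j+C_j)^{-k}$ converts $\beta_j^n/(\omega_j+C_j)^x$ into a finite sum of $(\omega_j+C_j)^{-(k+x)}$ factors; the base identity then applies to the next variable with shifted argument $k+x$, and crucially, the $\beta$ of the next $G$-factor depends only on the current base constant (not on $k$), so the $k$-summations collect cleanly within each cluster.

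After unfolding all $r$ iterations, each variable in $R = \{1,\ldots,r\}$ is classified as polar or polylog, and the polylog variables accumulate into an ordered sequence of clusters $K_1,\ldots,K_s$ in order of introduction, with $K_0 = R \setminus \bigsqcup_i K_i$ the polar group. The $(r-t)!$ orderings of $K_0$ all produce the same $(1/x)^{r-t}$ contribution via the telescoping shift $a \mapsto a+|\bm{\omega}_{K_0}|$, accounting for the $(r-t)!$ prefactor; inside each cluster $K_i$, the $k_i$ orderings of members produce $k_i!$. The $i$-th polylog argument emerges as the ratio $(a+|\bm{\omega}_{K_0 \sqcup \cdots \sqcup K_{i-1}}|)/(a+|\bm{\omega}_{K_0 \sqcup \cdots \sqcup K_i}|)$, which is the value of $\beta$ at the moment $K_i$ is introduced. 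The expansion $1/(x+n)^{k_i} = \sum_{l_i \geq 0}(-x)^{l_i}\binom{k_i+l_i-1}{l_i}/n^{k_i+l_i}$, arising from combining the nested $G$-factors and binomial contributions within a cluster, supplies the weights $\binom{k_i+l_i-1}{l_i}$ and promotes the $i$-th polylog index from $k_i$ to $k_i + l_i$; tallying the signs from $(-x)^{l_i}$ and $(-\omega_j)^k$ gives $(-1)^{m-t}$, with total $x$-power $x^{m-r}$ for $m = t + \sum_i l_i$.

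The main obstacle will be the final combinatorial step in which the $s$ a priori independent summations $\sum_{n_i \geq 1}$ from the polylog clusters, together with the intra-cluster binomial bookkeeping, must consolidate into the single multiple polylogarithm $\Li_{\bm{k}+\bm{l}}$ with its strictly nested summation $1 \leq n_1 < \cdots < n_s$. I would handle this by retaining the integral form $G(x,\beta) = \int_0^1 y^{x-1}/(1-\beta y)\,dy$ throughout the iteration and recognizing the final $s$-fold iterated integral as the iterated-integral representation of $\Li_{\bm{k}+\bm{l}}$ under the cascading change of variables on the base constants; a quasi-shuffle identity for multiple polylogs provides an alternative coefficient-matching route. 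Convergence of the rearranged series for $0 < x < 1$ follows from $\beta < 1$ guaranteed by $\omega_j > 0$.
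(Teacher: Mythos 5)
Your base identity is correct --- it is precisely the $r=1$ case of the paper's Proposition \ref{relation between I and Li0}, namely $(\omega+C)^x I_1(x;\omega,C)=\Li^0_{(1)}(x;C/(\omega+C))$ --- and resolving the variables one at a time is a legitimate alternative starting point. But the proof breaks down at exactly the step you flag as ``the main obstacle,'' and the mechanisms you propose there do not work. First, the rearrangement needed for the $k$-summations to ``collect cleanly'' is illegitimate: already for $r=2$, after writing $\beta_1^{n_1}(\omega_1+C_1)^{-x}=\sum_{k=0}^{n_1}\binom{n_1}{k}(-\omega_1)^k(\omega_1+C_1)^{-(x+k)}$ and integrating over $t_2$, the $k=0$ sub-branch alone contributes a multiple of $\frac1x\sum_{n_1\ge1}\frac{1}{n_1+x}$, which diverges (and for fixed $k\ge1$ the factor $\binom{n_1}{k}\sim n_1^k/k!$ makes the $n_1$-sum diverge even faster). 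The $k$-sum must therefore stay inside the $n_1$-sum, where it evaluates to $\int_0^1 y^{x-1}(1-wy)^{n_1}(1-\delta y)^{-1}\,dy$ with $w=\omega_1/(a+|\bm{\omega}|)$ and $\delta=(a+\omega_1)/(a+|\bm{\omega}|)$; extracting from $\sum_{n_1\ge1}(x+n_1)^{-1}$ times this integral the combination $2\Li^0_{(2)}+\Li^0_{(1,1)}$ demanded by the theorem is the entire difficulty, and it is asserted rather than proved. Second, your combinatorial accounting is not consistent with the target formula: with a fixed integration order $t_1,\ldots,t_r$, clusters ``in order of introduction'' can only appear in one order, so your mechanism produces neither the sum over \emph{all} ordered tuples of disjoint subsets $(K_1,\ldots,K_s)$ (for $r=2$ both $(\{1\},\{2\})$ and $(\{2\},\{1\})$ must occur) nor the factors $(r-t)!\,k_1!\cdots k_s!$, which you attribute to orderings of $K_0$ and of the $K_i$ that simply do not exist in a scheme with a fixed integration order.

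For contrast, the paper sidesteps both issues by never fixing an order on the variables: it proves the symmetric recursion $I_r(x;\bm{\omega},a)=\frac1x\sum_{i}I_{r-1}(x;(\omega_1,\ldots,\hat{\omega_i},\ldots,\omega_r),a+\omega_i)+aI_r(x+1;\bm{\omega},a)$, iterates it $N$ times to shift $x\mapsto x+N$, and shows the leftover terms vanish as $N\to\infty$. The levels $0\le n_1<\cdots<n_s<N$ at which nonempty groups of variables are consumed then produce the nested summation of the Hurwitz polylogarithms, the sum over ordered tuples $(K_1,\ldots,K_s)$, and the factorials automatically, and the final passage to ordinary $\Li_{\bm{k}+\bm{l}}$ is the one-line binomial expansion of $(n+x)^{-k}$ (Lemma \ref{Li1-Li}), which is the only place your expansion $1/(x+n)=\sum_l(-x)^l/n^{l+1}$ is actually needed. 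To salvage your route you would have to keep the $t_2,\ldots,t_r$ integrations unexpanded as an $r$-fold iterated integral and prove a separate lemma identifying it with the right-hand side of \eqref{eq:relation between I and Li0}; as written, that identification is missing.
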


For example, we have
\begin{equation}
\label{eq:I1-1}
(a+\omega)^x I_1(x;\omega,a)
=\dfrac1x+\sum_{k=0}^\infty (-1)^k \Li_{k+1}\Bigl(\dfrac{a}{a+\omega}\Bigr)
x^k
\end{equation}
and
\begin{align*}
(a+\omega_1+\omega_2)^x I_2(x;\bm{\omega},a)
&= \dfrac{2}{x^2}
+\Bigl\{\Li_1 \Bigl(\dfrac{a+\omega_1}{a+\omega_1+\omega_2}\Bigr)
+\Li_1 \Bigl(\dfrac{a+\omega_2}{a+\omega_1+\omega_2}\Bigr)\Bigr\}
\dfrac1x\\
&\quad +\sum_{m=2}^\infty c_{2,m}(\bm{\omega},a) x^{m-2},
\end{align*}
where
\begin{align*}
c_{2,m}(\bm{\omega},a)
&=(-1)^{m-1}\sum_{j=1,2} \Li_m\biggl(\dfrac{a+\omega_j}{a+\omega_1+\omega_2}\biggr)
+(-1)^m 2(m-1) \Li_m\biggl(\dfrac{a}{a+\omega_1+\omega_2}\biggr)\\
&\quad
+(-1)^m \sum_{l=1}^{m-1}
\sum_{j=1,2} \Li_{l,m-l} \biggl(\dfrac{a}{a+\omega_j},\dfrac{a+\omega_j}{a+\omega_1+\omega_2}\biggr).
\end{align*}

Our results concerning the behavior of $I_r(x;\bm{\omega},a)$ have the following
interesting application to multiple polylogarithms.
Theorems \ref{asymptotic for I} and \ref{power series for I} give two asymptotic formulas of $(a+|\bm{\omega}|)^x I_r(x;\bm{\omega},a)$ which are apparently different.
Hence, by comparing the coefficients of $x^{m-r}$ for $m\ge 1$,
we obtain some nontrivial relations among multiple polylogarithms.
For instance, when $r=2$ and $m=2$, we have
\begin{align}
\label{eq:example for r=m=2}
&-\Li_2 \Bigl(\dfrac{a+\omega_1}{a+\omega_1+\omega_2}\Bigr)
-\Li_2 \Bigl(\dfrac{a+\omega_2}{a+\omega_1+\omega_2}\Bigr)
+2\Li_2 \Bigl(\dfrac{a}{a+\omega_1+\omega_2}\Bigr)\\
&
+\Li_{1,1}\Bigl(\dfrac{a}{a+\omega_1},\dfrac{a+\omega_1}{a+\omega_1+\omega_2}\Bigr)
+\Li_{1,1}\Bigl(\dfrac{a}{a+\omega_2},\dfrac{a+\omega_2}{a+\omega_1+\omega_2}\Bigr) \notag\\
&=\log \dfrac{\omega_1}{a+\omega_1+\omega_2} \cdot \log \dfrac{\omega_2}{a+\omega_1+\omega_2} -\dfrac{\pi^2}{6}. \notag
\end{align}
Further examples of relations among multiple polylogarithms will be presented
in Corollary \ref{cor:1,...,1,2} below and in Section \ref{sec:application to MPL}.

Those examples can be shown as special cases of
the following Theorem \ref{relation among MPL}, which express 
$c_{r,m}(\bm{\omega},a)$
as a finite sum of products of logarithms and values of Bell polynomials
$B_k(x_1,\ldots,x_k)$ (whose
definition will be given in Section \ref{sec:refinement}).

\begin{theorem}
\label{relation among MPL}
Let $c_{r,m}(\bm{\omega},a)$ be defined as in \eqref{eq:def of c}.
If $1\le m \le r$, then
\begin{align}
\label{eq:relation among MPL}
c_{r,m}(\bm{\omega},a)
&=
\sum_{k=0}^m (-1)^{m-k}\dfrac{(r-m+k)!}{k!}
\Lambda_{m-k}\left(\dfrac{\bm{\omega}}{a+|\bm{\omega}|}\right)\\
&\quad \times
B_k(0,-1!\zeta(2),2!\zeta(3),\ldots,(-1)^{k-1}(k-1)!\zeta(k)).
\notag
\end{align}
\end{theorem}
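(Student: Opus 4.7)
The plan is to compare the two expansions of $(a+|\bm{\omega}|)^xI_r(x;\bm{\omega},a)$ already at our disposal: one obtained from Theorem \ref{asymptotic for I} by multiplying through by $(a+|\bm{\omega}|)^x$, the other being \eqref{eq:power series for I}. Because the $O(x)$ remainder in Theorem \ref{asymptotic for I} contributes only to the nonnegative-power part, the main term determines every coefficient of $x^{m-r}$ for $0\le m\le r$, so it suffices to extract these coefficients from the first expansion and match them against $c_{r,m}(\bm{\omega},a)$ from Theorem \ref{power series for I}.

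Set $L=\log(a+|\bm{\omega}|)$ and $\ell_i=\log\omega_i$, so that $\Lambda_k(\bm{\omega})=e_k(\ell_1,\ldots,\ell_r)$ while $\Lambda_j(\bm{\omega}/(a+|\bm{\omega}|))=e_j(\ell_1-L,\ldots,\ell_r-L)$ with $e_k$ denoting the elementary symmetric polynomial. Applying the standard translation identity $e_k(\ell_1,\ldots,\ell_r)=\sum_{j=0}^{k}L^{k-j}\binom{r-j}{k-j}e_j(\ell_1-L,\ldots,\ell_r-L)$ and interchanging the order of summation rewrites the Laurent part of the first expansion as
\[
\sum_{k=0}^{r}(-1)^k\Lambda_k(\bm{\omega})\frac{(r-k)!}{x^{r-k}}
=\sum_{j=0}^{r}(-1)^j(r-j)!\,\Lambda_j\!\left(\frac{\bm{\omega}}{a+|\bm{\omega}|}\right)\frac{1}{x^{r-j}}\sum_{p=0}^{r-j}\frac{(-Lx)^p}{p!}.
\]
The key observation is that multiplying by $(a+|\bm{\omega}|)^x=e^{Lx}$ trivialises the inner partial exponential up to high order: using the combinatorial identity $\sum_{p=0}^{m}(-1)^p\binom{n}{p}=(-1)^m\binom{n-1}{m}$, one verifies that $e^{Lx}\sum_{p=0}^{m}(-Lx)^p/p!=1+O(x^{m+1})$. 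Applying this with $m=r-j$ in each summand yields
\[
(a+|\bm{\omega}|)^x\sum_{k=0}^{r}(-1)^k\Lambda_k(\bm{\omega})\frac{(r-k)!}{x^{r-k}}
=\sum_{j=0}^{r}(-1)^j(r-j)!\,\Lambda_j\!\left(\frac{\bm{\omega}}{a+|\bm{\omega}|}\right)x^{j-r}+O(x).
\]
This cancellation, which is what causes the explicit factor $\log(a+|\bm{\omega}|)$ to disappear from the final identity, is the main technical step.

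To finish, multiply through by the Taylor expansion $e^{-\gamma x}/\Gamma(x+1)=\sum_{n\ge 0}b_n x^n/n!$ from \eqref{power series for Gamma 1}, where $b_n=B_n(0,-1!\zeta(2),\ldots,(-1)^{n-1}(n-1)!\zeta(n))$, and collect the coefficient of $x^{m-r}$ for $1\le m\le r$. The only pairs contributing are those with $n=m-j$ and $0\le j\le m$; substituting $k=m-j$ recovers exactly the right-hand side of \eqref{eq:relation among MPL}. Equating this coefficient with $c_{r,m}(\bm{\omega},a)$ from Theorem \ref{power series for I} completes the proof.
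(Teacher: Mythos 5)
Your proof is correct, and it follows the same overall strategy as the paper---equating the coefficient of $x^{m-r}$ in the two expansions of $(a+|\bm{\omega}|)^x I_r(x;\bm{\omega},a)$ supplied by Theorems \ref{asymptotic for I} and \ref{power series for I}---but the pivotal simplification is carried out differently. The paper disposes of the factor $(a+|\bm{\omega}|)^x$ in one line via the homogeneity relation
\[
(a+|\bm{\omega}|)^x I_r(x;\bm{\omega},a)=I_r\biggl(x;\dfrac{\bm{\omega}}{a+|\bm{\omega}|},\dfrac{a}{a+|\bm{\omega}|}\biggr),
\]
which is immediate from the definition \eqref{def of I}, and then applies Theorem \ref{asymptotic for I} to the rescaled integral, so the quantities $\Lambda_l(\bm{\omega}/(a+|\bm{\omega}|))$ appear at once with no algebra. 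You instead multiply the asymptotic formula of Theorem \ref{asymptotic for I} by $(a+|\bm{\omega}|)^x$ directly and verify the cancellation by hand, using the translation identity for elementary symmetric polynomials together with the observation that $e^{Lx}\sum_{p=0}^{m}(-Lx)^p/p!=1+O(x^{m+1})$. I checked the bookkeeping: the identity $(r-j-p)!\binom{r-j}{p}=(r-j)!/p!$ does turn the inner sum into the truncated exponential, the product with $e^{Lx}$ is $1+O(x^{r-j+1})$ termwise, and collecting the coefficient of $x^{m-r}$ with $k=m-j$ reproduces the right-hand side of \eqref{eq:relation among MPL} exactly. The paper's route is shorter and hides the cancellation inside the rescaling; yours is longer but makes explicit the combinatorial mechanism by which $\log(a+|\bm{\omega}|)$ drops out of the final relation.
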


Here, the values of Bell polynomials appear as polynomials in Riemann zeta values with integer coefficients.
Since $c_{r,m}(\bm{\omega},a)$ is a linear combination of multiple polylogarithms with integer coefficients by \eqref{eq:def of c}, the above theorem gives nontrivial relations
among multiple polylogarithms.
\par
Theorems \refeq{v9 first complete asymptotic expansion of integral} and \ref{power series for I} provide the complete asymptotic expansions of the integral $I_r(x;\bm{\omega},a)$.  Consequently, they establish the following specific relationships among the expansion coefficients $c_{r,m}(\bm{\omega},a),d_{r,m}(\bm{\omega},a)$. Alternatively, these results yield nontrivial identities involving multiple polylogarithms.
\begin{theorem}
	\label{v9 relation between coefficients c and d}
	For  integer $ m\geq 0 $ and  $ |\bm{\omega}|<a $, we have\\
	\begin{align}
		\label{v9 d in terms c}
		d_{r,m}(\bm{\omega},a)
	=\dfrac{r!}{m!} \biggl(\log \dfrac{a}{a+|\bm{\omega}|}\biggr)^m
	+\sum_{k=1}^m \biggl(\log \dfrac{a}{a+|\bm{\omega}|}\biggr)^{m-k}
	\dfrac{c_{r,k}(\bm{\omega},a)}{(m-k)!}
\end{align}
and 
	\begin{align}
		\label{v9 c in terms d}
		c_{r,m}(\bm{\omega},a)
	=\dfrac{r!}{m!} \biggl(\log \dfrac{a+|\bm{\omega}|}{a}\biggr)^m
	+\sum_{k=1}^m \biggl(\log \dfrac{a+|\bm{\omega}|}{a}\biggr)^{m-k}
	\dfrac{d_{r,k}(\bm{\omega},a)}{(m-k)!}.
\end{align}
	\end{theorem}

For instance, by taking $ r=1 $ and $ \frac{\omega}{a}=x $  in Theorem \ref{v9 relation between coefficients c and d}, we obtain the following relations, which imply that each $\Li_{1,\ldots,1,2}(x)$ can be expressed in terms of polylogarithms, and vice versa.
\begin{corollary}
	\label{cor:1,...,1,2}
	For $ 0<x<1 $ and any positive integer $ k $, we have
	\begin{align}\label{cor-1-1}
		&\Li_{\underbrace{\mbox{\scriptsize $1,\ldots,1$}}_{k-1},2}\left(-x\right)
		+(-1)^{k+1}\zeta(k+1)\\
		&\hspace{4mm}=-\dfrac{1}{(k+1)!} \biggl(\log \dfrac{1}{x+1}\biggr)^{k+1}
		+\sum_{j=0}^k \dfrac{(-1)^{j+1}}{(k-j)!}
		\biggl(\log \dfrac{1}{x+1}\biggr)^{k-j}
		\Li_{j+1}\biggl(\dfrac{1}{x+1}\biggr)\notag
	\end{align}
	and
	\begin{align}\label{cor-1-2}
		&\Li_{k+1}\biggl(\dfrac{1}{x+1}\biggr)\\
		&\hspace{4mm}=
		-\dfrac{1}{(k+1)!} \biggl(\log \dfrac{1}{x+1}\biggr)^{k+1}
		+\dfrac{1}{k!} \log \dfrac{1}{x} \cdot \biggl(\log \dfrac{1}{x+1}\biggr)^k\notag\\
		&\hspace{9mm}
		+\sum_{j=1}^k \dfrac{(-1)^{j+1}}{(k-j)!}
		\biggl(\log \dfrac{1}{x+1}\biggr)^{k-j}
		\biggl(\Li_{\underbrace{\mbox{\scriptsize $1,\ldots,1$}}_{j-1},2}
		\left(-x \right)
		+(-1)^{j+1} \zeta(j+1)\biggr).\notag
	\end{align}
\end{corollary}

We will prove Theorems \ref{asymptotic for I}, \ref{asymptotic for M}, \ref{v9 first complete asymptotic expansion of integral} and \ref{power series for I}
 in Sections \ref{sec:asymptotic for I}, \ref{sec:asymptotic for M}, 
\ref{sec:refinement}, and \ref{sec:power series for I}, respectively. 
Section \ref{sec:asymptotic for M} also contains the proof of \eqref{eq:asymptotic for MZF} as an application of Theorem \ref{asymptotic for M}.  
 In Section \ref{sec:application to MPL}, we will present the proofs of Theorems \ref{relation among MPL}, \ref{v9 relation between coefficients c and d}
and also provide some examples.


\section{Proof of Theorem \ref{asymptotic for I}}
\label{sec:asymptotic for I}
Before the proof of Theorem \ref{asymptotic for I}, we first notice:

\begin{proposition}
\label{convergence for I}
The integral \eqref{def of I} is convergent if and only if $x>0$.
\end{proposition}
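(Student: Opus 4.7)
The plan is to separate the two directions and use elementary bounds on the integrand together with Fubini--Tonelli.

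For sufficiency ($x>0$), I would invoke the arithmetic--geometric mean inequality
\[
\omega_1 t_1+\omega_2 t_2+\cdots+\omega_r t_r \ge r(\omega_1\omega_2\cdots\omega_r)^{1/r}(t_1 t_2\cdots t_r)^{1/r},
\]
so that, since $a\ge 0$, one has
\[
(\omega_1 t_1+\cdots+\omega_r t_r+a)^x \ge r^x (\omega_1\cdots\omega_r)^{x/r}(t_1\cdots t_r)^{x/r}.
\]
Consequently the integrand in \eqref{def of I} is majorized by a positive constant multiple of $t_1^{-1-x/r}\cdots t_r^{-1-x/r}$. Applying Tonelli and using $\int_1^\infty t^{-1-x/r}\,dt=r/x<\infty$, I conclude that $I_r(x;\bm{\omega},a)$ converges.

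For necessity ($x\le 0$), I would use the crude but sufficient estimate
\[
\omega_1 t_1+\cdots+\omega_r t_r+a \ge \omega_1+\cdots+\omega_r+a=:c>0
\qquad (t_i\ge 1),
\]
which is strictly positive because the $\omega_i$ are positive (even when $a=0$). Since $-x\ge 0$, raising both sides to the power $-x$ preserves the inequality, giving $(\omega_1 t_1+\cdots+\omega_r t_r+a)^{-x}\ge c^{-x}>0$. Hence the integrand is bounded below by $c^{-x}/(t_1\cdots t_r)$, and by Tonelli the multiple integral dominates $c^{-x}\bigl(\int_1^\infty dt/t\bigr)^r=+\infty$, proving divergence.

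No step in this argument appears to present a genuine obstacle: the proof reduces to elementary inequalities combined with the well-known convergence threshold of $\int_1^\infty t^{-1-\alpha}\,dt$. The only mild subtlety is to verify that the AM--GM bound produces a \emph{positive} constant, and that the lower bound $c$ in the divergence case remains strictly positive in the potentially degenerate case $a=0$; both are immediate from the standing assumption $\omega_i>0$.
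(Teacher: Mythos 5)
Your argument is correct and coincides with the paper's own proof: both directions use exactly the same bounds (the AM--GM lower bound $\omega_1t_1+\cdots+\omega_rt_r+a\ge r(\omega_1\cdots\omega_r)^{1/r}(t_1\cdots t_r)^{1/r}$ for convergence when $x>0$, and the constant lower bound $(\omega_1t_1+\cdots+\omega_rt_r+a)^{-x}\ge(|\bm{\omega}|+a)^{-x}$ for divergence when $x\le 0$), reducing everything to the product of one-dimensional integrals. No gaps; the explicit appeal to Tonelli is a minor point the paper leaves implicit.
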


\begin{proof}
For $x>0$, since
\begin{equation}
\label{eq:AGM}
\omega_1t_1+\cdots+\omega_rt_r+a
\geq \omega_1t_1+\cdots+\omega_rt_r\\
\geq r (\omega_1\cdots \omega_r)^{\frac{1}{r}}
(t_1\cdots t_r)^{\frac{1}{r}},
\end{equation}
we have
\[
I_r(x;\bm{\omega},a)
\ll  \int_1^\infty \cdots \int_1^\infty
\dfrac{dt_1\cdots dt_r}{t_1\cdots t_r (t_1\cdots t_r)^{\frac{x}{r}}}
=\prod_{i=1}^{r}\left(\int_1^\infty \frac{dt_i}{t_i^{1+\frac{x}{r}}}\right)<\infty\,.
\]
The converse part follows from the fact that, for $x\leq 0$, 
\begin{align*}
I_r(x;\bm{\omega},a)
&=\int_1^\infty \cdots \int_1^\infty
\frac{(\omega_1t_1+\cdots+\omega_r t_r+a)^{-x}}{t_1\cdots t_r}\, dt_1\cdots dt_r\\
&\ge
\int_1^\infty \cdots \int_1^\infty
\frac{(|\bm{\omega}|+a)^{-x}}{t_1\cdots t_r}\, dt_1\cdots dt_r
=(|\bm{\omega}|+a)^{-x}\prod_{i=1}^{r}\left(\int_1^\infty \frac{1}{t_i}\, dt_i\right),
\end{align*}
which is divergent.
	\end{proof}

To prove Theorem \ref{asymptotic for I}
and Theorem \ref{expression by S} below,
we need the following asymptotic behavior of  the incomplete gamma function
\[
\Gamma(s,u)=\int_u^\infty e^{-t} t^{s-1} \, dt,
\]
where $s$ is a complex number and $u>0$.

\begin{lemma}
For $u>0$, we have
\begin{equation}
\label{incomplete gamma}
\Gamma(0,u)=-\log u-\gamma-\sum_{n=1}^\infty \dfrac{(-u)^n}{n\cdot n!}.
\end{equation}
\end{lemma}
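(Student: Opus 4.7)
The plan is to establish \eqref{incomplete gamma} by showing both sides define the same function of $u>0$, using the classical strategy of matching derivatives and then fixing the additive constant via a limit. First I would observe that the proposed right-hand side is well-defined: the power series $\sum_{n\ge 1}(-u)^n/(n\cdot n!)$ has infinite radius of convergence (the ratio test gives $|-u|\cdot n/(n+1)^2 \to 0$), so termwise differentiation is legitimate.

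The key step is to check that both sides have the same derivative with respect to $u$. Differentiating $\Gamma(0,u)=\int_u^\infty e^{-t}t^{-1}\,dt$ under the integral sign yields $-e^{-u}/u$. On the other hand, termwise differentiation of the right-hand side gives
$$-\frac{1}{u}-\sum_{n=1}^\infty \frac{(-1)^n u^{n-1}}{n!}=-\frac{1}{u}-\frac{e^{-u}-1}{u}=-\frac{e^{-u}}{u},$$
using $\sum_{n\ge 1}(-1)^n u^{n-1}/n!=(e^{-u}-1)/u$. Hence the two sides of \eqref{incomplete gamma} differ by an additive constant.

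To pin down this constant, I would take $u\to 0^+$. The power series vanishes in the limit, so only the $-\log u-\gamma$ part survives on the right. For the left, splitting $\Gamma(0,u)=\int_u^1 e^{-t}t^{-1}\,dt+\int_1^\infty e^{-t}t^{-1}\,dt$ and writing $\int_u^1 e^{-t}t^{-1}\,dt=-\log u+\int_u^1 (e^{-t}-1)t^{-1}\,dt$ (the integrand being bounded near $0$ since $(e^{-t}-1)/t\to -1$), I obtain
$$\lim_{u\to 0^+}\bigl(\Gamma(0,u)+\log u\bigr)=\int_0^1 \frac{e^{-t}-1}{t}\,dt+\int_1^\infty\frac{e^{-t}}{t}\,dt.$$
Matching constants then reduces the lemma to the identity that this right-hand side equals $-\gamma$.

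This final identity is a classical representation of Euler's constant and is the only non-routine input. It can be derived from $\Gamma'(1)=-\gamma$ by integrating $\int_\varepsilon^N e^{-t}\log t\,dt$ by parts with $dv=e^{-t}dt$ and $u=\log t$, then applying the same splitting of $\int_\varepsilon^N e^{-t}t^{-1}\,dt$ at $t=1$; the boundary term $(e^{-\varepsilon}-1)\log\varepsilon$ vanishes as $\varepsilon\to 0^+$ and $-e^{-N}\log N$ vanishes as $N\to\infty$, leaving exactly the desired equality. The main (and only mild) obstacle is this bookkeeping of divergent boundary and endpoint terms in the integration-by-parts step; alternatively, one may simply cite the formula as a standard representation of $\gamma$.
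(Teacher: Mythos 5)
Your proof is correct, but it follows a genuinely different route from the paper's. The paper keeps $u$ fixed and works with the parameter $s$: for $s>0$ it writes $\Gamma(s,u)=\Gamma(s)-\int_0^u e^{-t}t^{s-1}\,dt$, expands the lower incomplete gamma termwise into $\sum_{n\ge 0}\frac{(-1)^n u^{n+s}}{(n+s)\,n!}$, and then lets $s\to 0^+$, absorbing the two divergent pieces into the single limit $\lim_{s\to 0^+}\bigl(\Gamma(s)-u^s/s\bigr)=\Gamma'(1)-\log u=-\gamma-\log u$. You instead fix the identity up to a constant by matching $u$-derivatives (both sides differentiate to $-e^{-u}/u$) and then determine the constant by letting $u\to 0^+$, which reduces the lemma to the classical representation $-\gamma=\int_0^1\frac{e^{-t}-1}{t}\,dt+\int_1^\infty\frac{e^{-t}}{t}\,dt$. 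Both arguments ultimately rest on $\Gamma'(1)=-\gamma$; the paper's version gets the full expansion (logarithmic term, constant, and power series) in one stroke and avoids any separate limit computation at $u=0$, while yours is more elementary in that it needs no analytic continuation in $s$ and isolates exactly which classical fact about $\gamma$ is being used. Your bookkeeping of the boundary terms $(e^{-\varepsilon}-1)\log\varepsilon\to 0$ and $-e^{-N}\log N\to 0$ in the integration-by-parts derivation of that fact is correct, so there is no gap.
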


\begin{proof}
If  $s>0$, then we can write $\Gamma(s,u)$ as
\[
\Gamma(s,u)
=\Gamma(s)-\int_0^u e^{-t}t^{s-1}\, dt
\]
and expand the integral into a power series in $u$ as follows:
\[
\int_0^u e^{-t}t^{s-1}\, dt
=\int_0^u \biggl(\sum_{n=0}^\infty \dfrac{(-t)^n}{n!}\biggr) t^{s-1}\, dt
=\sum_{n=0}^\infty \dfrac{(-1)^n u^{n+s}}{(n+s)\cdot n!}.
\]
Hence, we have
\[
\Gamma(0,u)=\lim_{s\to 0^+} \biggl(\Gamma(s)-\dfrac{u^s}{s}\biggr)
-\sum_{n=1}^\infty \dfrac{(-u)^n}{n\cdot n!}.
\]
Since the limit coincides with $\Gamma'(1)-\log u=-\gamma-\log u$,
we obtain \eqref{incomplete gamma}.
\end{proof}

We now prove Theorem \ref{asymptotic for I}.
By using the integral expression of $\Gamma(x)$, we see
\begin{align}
\label{expression by incomplete Gamma}
I_r(x;\bm{\omega},a)
&=
\int_1^\infty \cdots \int_1^\infty
\dfrac{1}{t_1\cdots t_r}
\cdot 
\dfrac{1}{\Gamma(x)}
\int_0^\infty
e^{-(\omega_1t_1+\cdots +\omega_rt_r+a)u}u^{x-1}\, du dt_1\cdots dt_r\\[5pt]
&=\dfrac{1}{\Gamma(x)}
\int_0^\infty
\biggl(\int_1^\infty \dfrac{e^{-\omega_1 t_1u}}{t_1}\, dt_1\biggr)
\cdots 
\biggl(\int_1^\infty \dfrac{e^{-\omega_r t_ru}}{t_r}\, dt_r\biggr)
\cdot e^{-au} u^{x-1}\, du \notag\\[5pt]
&=\dfrac{1}{\Gamma(x)}
\int_0^\infty \prod_{i=1}^r \Gamma(0,\omega_i u)
\cdot e^{-au} u^{x-1}\, du. \notag
\end{align}
We apply the inequality $\Gamma(0,u)\ll e^{-u/2}$ as $u\to\infty$
to obtain 
\[
I_r(x;\bm{\omega},a)
=\dfrac{1}{\Gamma(x)}
\int_0^1 \prod_{i=1}^r \Gamma(0,\omega_i u)
\cdot e^{-au} u^{x-1}\, du+O(x).
\]
Note that
$\Gamma(0,\omega_iu)=-\log \omega_i-\log u-\gamma +O(u)$
by \eqref{incomplete gamma}
and that $e^{-au}=1+O(u)$.
Hence we get
\begin{align*}
I_r(x;\bm{\omega},a)
&=\dfrac{(-1)^r}{\Gamma(x)}
\int_0^1 \prod_{i=1}^r (\log \omega_i+\log u+\gamma) \cdot u^{x-1}\, du
+O(x)\\
&=\dfrac{(-1)^r}{\Gamma(x)}
\int_0^1 \biggl(\sum_{k=0}^r \Lambda_k(\bm{\omega}) (\log u+\gamma)^{r-k} \biggr) u^{x-1}\, du
+O(x)\\
&=\dfrac{(-1)^r}{\Gamma(x)}
\sum_{k=0}^r \Lambda_k(\bm{\omega}) \int_0^1 (\log u+\gamma)^{r-k} u^{x-1}\, du
+O(x).
\end{align*}
As for the integral in the above, we see that
\begin{align*}
\int_0^1 (\log u+\gamma)^{r-k} u^{x-1}\, du
&=e^{-\gamma x} \int_0^1 \biggl(\dfrac{\partial}{\partial x}\biggr)^{r-k}
e^{\gamma x} u^{x-1}\, du
=
e^{-\gamma x} \biggl(\dfrac{\partial}{\partial x}\biggr)^{r-k} \dfrac{e^{\gamma x}}{x}\\
&= e^{-\gamma x} \cdot \dfrac{(-1)^{r-k} (r-k)!}{x^{r-k+1}}+O(1),
\end{align*}
where the last equality can be shown by considering the power series expansion of 
$e^{\gamma x}$ and differentiating termwisely. 
Thus we obtain Theorem \ref{asymptotic for I}.

\section{Proof of Theorem \ref{asymptotic for M}}
\label{sec:asymptotic for M}
Before proving Theorem \ref{asymptotic for M}, we remark the following proposition,
which can be easily derived using a method similar to the proof of Proposition \ref{convergence for I}.

\begin{proposition}
\label{convergence}
The series \eqref{function M} is convergent if and only if $x>0$.
\end{proposition}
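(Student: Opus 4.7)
The plan is to mirror the proof of Proposition \ref{convergence for I} line by line, replacing each integral over $[1,\infty)^r$ by the corresponding sum over $(\Zb_{\ge 1})^r$. The two ingredients used there — the AM–GM inequality on $\omega_1 t_1 + \cdots + \omega_r t_r + a$ and a trivial constant lower bound on the same expression — remain valid at the level of summation, so the proof should carry over essentially verbatim.

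For the sufficiency ($x>0$), the key step is to invoke the inequality \eqref{eq:AGM} with $t_i$ replaced by $n_i$, which yields
\[
\omega_1 n_1 + \cdots + \omega_r n_r + a \;\geq\; r(\omega_1\cdots\omega_r)^{1/r}(n_1\cdots n_r)^{1/r}.
\]
Substituting this into \eqref{function M} allows the $r$-fold sum to factor as a constant multiple of $\prod_{i=1}^r \sum_{n_i=1}^\infty n_i^{-1-x/r}$, which converges since $1+x/r>1$.

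For the necessity ($x\le 0$), the plan is to produce a divergent lower bound. Since $n_i\geq 1$ and $\omega_i>0$, one has $\omega_1 n_1 + \cdots + \omega_r n_r + a \geq |\bm{\omega}|+a$, and since $-x\ge 0$ this gives
\[
M_r(x;\bm{\omega},a) \;\geq\; \min\{1,(|\bm{\omega}|+a)^{-x}\} \prod_{i=1}^r \sum_{n_i=1}^\infty \frac{1}{n_i},
\]
which diverges because each harmonic series diverges. (A minor care is needed only to handle the cases $|\bm{\omega}|+a<1$ versus $|\bm{\omega}|+a\ge 1$, which is why the minimum appears; alternatively one can split off the case $x=0$ where the factor is simply $1$.)

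There is really no major obstacle: both halves are immediate transcriptions of the corresponding steps for $I_r$, the AM–GM argument applying equally well to integer tuples, and the constant lower bound being even easier in the discrete setting since $n_i\ge 1$ automatically.
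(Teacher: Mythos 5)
Your proposal is correct and is exactly what the paper intends: the paper gives no separate proof of this proposition, remarking only that it follows "using a method similar to the proof of Proposition \ref{convergence for I}," which is precisely your line-by-line transcription of the AM--GM upper bound and the constant lower bound to the discrete setting. (The $\min\{1,(|\bm{\omega}|+a)^{-x}\}$ is unnecessary, since for $-x\ge 0$ monotonicity of $t\mapsto t^{-x}$ already gives $(\omega_1n_1+\cdots+\omega_rn_r+a)^{-x}\ge(|\bm{\omega}|+a)^{-x}$ regardless of whether $|\bm{\omega}|+a$ exceeds $1$, but this extra caution does no harm.)
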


In this section, we present two proofs of Theorem \ref{asymptotic for M}.
In the first proof, we apply Theorem  \ref{asymptotic for I},
and so it is essential to reduce the estimate of $M_r(x;\bm{\omega},a)$ to that of $I_r(x;\bm{\omega},a)$.
This reduction can be achieved using Proposition \ref{relation between M and I} below.
We begin with preparing the following several lemmas necessary for the proof of Proposition \ref{relation between M and I}.

\begin{lemma}
\label{Abel}
The multiple harmonic series \eqref{function M} can be expressed in  terms of multiple integrals in the form
\begin{align}
\label{eq:Abel}
M_r(x;\bm{\omega},a)
&=\omega_1\cdots \omega_r (x)_r
\int_1^\infty \cdots \int_1^\infty
\biggl(\sum_{n_1\le t_1} \dfrac{1}{n_1}\biggr)
\cdots
\biggl(\sum_{n_r\le t_r} \dfrac{1}{n_r}\biggr)\\
&\hskip35mm \times 
\dfrac{1}{(\omega_1t_1+\cdots+\omega_rt_r+a)^{x+r}}
dt_1\cdots dt_r, \notag
\end{align}
where $ (x)_r $ is the Pochhammer symbol, defined by, $ (x)_r=x(x+1)\cdots (x+r-1) $.
\end{lemma}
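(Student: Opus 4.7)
The plan is to prove the identity by applying the classical Abel summation formula iteratively in each of the variables $n_1,n_2,\ldots,n_r$, one at a time, which converts each discrete sum into an integral, incrementing the exponent in the denominator and producing a factor of $\omega_i(x+i-1)$ at the $i$-th step.

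First I would record the one-dimensional Abel summation formula: for a sequence $(a_n)$ with partial sums $A(t)=\sum_{n\le t}a_n$ and a $C^1$ function $f$,
\[
\sum_{n=1}^{N} a_n f(n)=A(N)f(N)-\int_1^N A(t)f'(t)\,dt.
\]
Applied with $a_n=1/n$ and $f(t)=(\omega_1 t+C)^{-x}$, where $C=\omega_2 n_2+\cdots+\omega_r n_r+a>0$, the boundary term $A(N)f(N)$ is $O(N^{-x}\log N)\to 0$ as $N\to\infty$ (using $x>0$). Hence
\[
\sum_{n_1\ge 1}\frac{1}{n_1(\omega_1 n_1+C)^x}
=\omega_1 x\int_1^{\infty}\Bigl(\sum_{n_1\le t_1}\tfrac{1}{n_1}\Bigr)
\frac{dt_1}{(\omega_1 t_1+C)^{x+1}}.
\]

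Next I would iterate this procedure. After the $i$-th application, the remaining inner sum has the form $\sum_{n_{i+1}\ge 1}\frac{1}{n_{i+1}(\omega_{i+1}n_{i+1}+C_i)^{x+i}}$ where $C_i=\omega_1 t_1+\cdots+\omega_i t_i+\omega_{i+2}n_{i+2}+\cdots+\omega_r n_r+a$, and the same Abel argument (with the larger exponent $x+i$, so the boundary term vanishes even more easily) yields the factor $\omega_{i+1}(x+i)$. Multiplying the factors over $i=0,1,\ldots,r-1$ gives
\[
\prod_{i=0}^{r-1}\omega_{i+1}(x+i)=\omega_1\cdots\omega_r\cdot x(x+1)\cdots(x+r-1)=\omega_1\cdots\omega_r\,(x)_r,
\]
and the exponent in the denominator becomes $x+r$, matching \eqref{eq:Abel}.

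The only real obstacle is a bookkeeping one: justifying that at each step the sum and integral may be exchanged with the remaining summations over $n_{i+2},\ldots,n_r$. This will be handled by absolute convergence, since after the $i$-th step the integrand is a positive function and the iterated integral-sum is bounded above (using the AM--GM estimate \eqref{eq:AGM} exactly as in the proof of Proposition \ref{convergence for I}) by a convergent quantity; Tonelli's theorem then licenses all interchanges. With the exchanges justified and the boundary terms shown to vanish at each stage, the iteration terminates after $r$ steps and produces precisely \eqref{eq:Abel}.
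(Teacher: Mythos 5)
Your proposal is correct and follows essentially the same route as the paper: iterated one-variable Abel summation converting each sum $\sum_{n_i} 1/(n_i(\cdots)^{x+i-1})$ into an integral with factor $\omega_i(x+i-1)$, with the boundary term killed by $\sum_{n\le N}n^{-1}=O(N^{\epsilon})$ and $x>0$, and convergence of the resulting multiple integral checked via the AM--GM bound \eqref{eq:AGM}. The only (immaterial) differences are the order in which the variables are processed and that you invoke Tonelli for the interchanges where the paper works with finite partial sums $n_i\le N_i$ and passes to the limit one variable at a time.
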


\begin{proof}
	First we have to check the existence of the multiple integral on the
	right-hand side of \eqref{eq:Abel}.
	As $ \displaystyle \sum_{n\leq t}n^{-1}\ll t^{\frac{x}{2r}} $ and \eqref{eq:AGM},
	we have
	\begin{align*}
	&\int_{1}^{\infty}\cdots \int_{1}^{\infty} \left(\sum_{n_1\leq t_1}\frac{1}{n_1}\right)	\cdots \left(\sum_{n_r\leq t_r}\frac{1}{n_r}\right)\frac{1}{(\omega_1t_1+\cdots+\omega_rt_r+a)^{x+r}}dt_1\cdots dt_r\\
	&\ll \int_{1}^{\infty}\cdots \int_{1}^{\infty}\frac{t_1^{\frac{x}{2r}}\cdots t_r^{\frac{x}{2r}}}{ (t_1\cdots t_r)^{1+\frac{x}{r}}}dt_1 \cdots dt_r=\left(\int_{1}^{\infty}\frac{1}{t^{1+\frac{x}{2r}}}dt\right)^r=\left(\frac{2r}{x}\right)^r < \infty\,.
	\end{align*}
	
Let $N_1,\ldots,N_r$ be positive integers, and we start with
	\begin{align}
		\label{Lemma1,exp1}
	&\sum_{n_i\leq N_i,\,1\leq i \leq r}\frac{1}{n_1n_2\cdots n_r(\omega_1n_1+\cdots+\omega_rn_r+a)^x}\\&\qquad=\sum_{n_i\leq N_i,\,1\leq i \leq r-1}\frac{1}{n_1n_2\cdots n_{r-1}}\sum_{n_r\leq N_r}\frac{1}{n_r(\omega_1n_1+\cdots+\omega_rn_r+a)^x}.\notag
	\end{align}
Applying Abel's summation formula, we find that the inner sum 
is equal to
\begin{align}
	\label{lemma1,exp2}
&\frac{\sum_{n_r\leq N_r}\frac{1}{n_r}}{(\omega_1 n_1+\cdots+\omega_{r-1}n_{r-1}+\omega_r N_r+a)^x}\\
&\quad
+\omega_rx\int_{1}^{N_r}\left(\sum_{n_r\leq t_r}\frac{1}{n_r}\right)\frac{1}{(\omega_1n_1+\cdots+\omega_{r-1}n_{r-1}+\omega_rt_r+a)^{x+1}}dt_r\,. \notag
\end{align}
Since $ \sum_{n\leq N}n^{-1}=O\left(N^\epsilon\right) $ for any $ \epsilon>0 $, we get from \eqref{Lemma1,exp1} and \eqref{lemma1,exp2} by taking $ N_r \to\infty $ that
\begin{align*}
	&\sum_{\substack{n_i\leq N_i,\,1\leq i \leq r-1\\n_r\geq 1}}	\frac{1}{n_1n_2\cdots n_r(\omega_1n_1+\cdots+\omega_rn_r+a)^x}\\
	&=\omega_rx\sum_{n_i\leq N_i,\,1\leq i \leq r-1}\frac{1}{n_1n_2\cdots n_{r-1}}\int_{1}^{\infty}\left(\sum_{n_r\leq t_r}\frac{1}{n_r}\right)\\
	&\qquad\qquad\times \frac{1}{(\omega_1n_1+\cdots+\omega_{r-1}n_{r-1}+\omega_rt_r+a)^{x+1}}dt_r\\
	&=\omega_rx\sum_{n_i\leq N_i,\,1\leq i \leq r-2}\frac{1}{n_1n_2\cdots n_{r-2}}\int_{1}^{\infty}\left(\sum_{n_r\leq t_r}\frac{1}{n_r}\right)\\
	&\qquad\qquad\times\sum_{n_{r-1}\leq N_{r-1}}\frac{1}{n_{r-1}(\omega_1n_1+\cdots+\omega_{r-1}n_{r-1}+\omega_rt_r+a)^{x+1}}dt_r\,.
\end{align*}
Again applying the Abel summation formula to 
the innermost sum similarly and then taking $ N_{r-1} $ tends to $ \infty $, we get that
\begin{align*}
&\sum_{\substack{n_i\leq N_i,\,1\leq i \leq r-2\\n_{r-1},n_r\geq 1}}	\frac{1}{n_1n_2\cdots n_r(\omega_1n_1+\cdots+\omega_rn_r+a)^x}\\
&=\omega_1\omega_2(x)_2	\sum_{n_i\leq N_i,\,1\leq i \leq r-2}\frac{1}{n_1n_2\cdots n_{r-2}}\int_{1}^{\infty}\int_{1}^{\infty}\left(\sum_{n_{r-1}\leq t_{r-1}}
\frac{1}{n_{r-1}}\right)\left(\sum_{n_r\leq t_r}\frac{1}{n_r}\right)\\
&\qquad\qquad\times\frac{1}{(\omega_1n_1+\cdots+\omega_{r-2}n_{r-2}+\omega_{r-1}t_{r-1}+\omega_r t_r+a)^{x+2}}dt_{r-1}dt_r\,.
\end{align*}
After continuing this process $ r $-times, we reach at the multiple integral \eqref{eq:Abel}.
	\end{proof}

\begin{lemma}
\label{integration by parts}
For $J\subset R$, we have
\begin{equation}
\label{eq:integration by parts}
\omega_1\cdots \omega_r (x)_r
\int_1^\infty \cdots \int_1^\infty
\dfrac{(\prod_{j\in J} \log t_j )\,dt_1\cdots dt_r}{(\omega_1t_1+\cdots+\omega_r t_r+a)^{x+r}}
=I_{|J|}(x;\bm{\omega}_J,a+|\bm{\omega}_{R\setminus J}|),
\end{equation}
where $I_0(x;\emptyset,a)$ is interpreted as $a^{-x}$.
\end{lemma}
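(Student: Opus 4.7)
The strategy is to split the $r$-fold integral on the left-hand side of \eqref{eq:integration by parts} into two groups of variables: those indexed by $R\setminus J$, whose portion of the integrand carries no logarithm and can be eliminated by an elementary antiderivative, and those indexed by $J$, whose logarithms are removed by successive integration by parts.

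First I would verify absolute integrability so that Fubini applies and all iterated integrations may be freely rearranged: the elementary bound $\log t_j\ll t_j^{\varepsilon}$ together with the AM--GM estimate used in Proposition \ref{convergence for I} puts the integrand in $L^1([1,\infty)^r)$. Next I would iteratively apply the elementary identity
\begin{equation*}
\int_1^\infty \frac{dt_i}{(A+\omega_i t_i)^\nu}=\frac{1}{\omega_i(\nu-1)(A+\omega_i)^{\nu-1}}\qquad (\nu>1,\ A>0)
\end{equation*}
to integrate out each $t_i$ with $i\in R\setminus J$. Running $\nu$ down from $x+r$ to $x+|J|+1$ produces a factor $\prod_{i\in R\setminus J}\omega_i^{-1}\cdot (x)_{|J|}/(x)_r$ and shifts the constant from $a$ to $a+|\bm{\omega}_{R\setminus J}|$. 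Combining with the prefactor $\omega_1\cdots\omega_r(x)_r$ (and using $\prod_{i\in R}\omega_i\cdot \prod_{i\in R\setminus J}\omega_i^{-1}=\prod_{j\in J}\omega_j$), the left-hand side collapses to
\begin{equation*}
\Bigl(\prod_{j\in J}\omega_j\Bigr)(x)_{|J|}\int_{[1,\infty)^{|J|}}\frac{\prod_{j\in J}\log t_j\,dt_j}{\bigl(a+|\bm{\omega}_{R\setminus J}|+\sum_{j\in J}\omega_j t_j\bigr)^{x+|J|}}.
\end{equation*}

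Finally I would remove the logarithms one at a time by integration by parts in each variable $t_j$, $j\in J$, with $u=\log t_j$ and $dv$ the remaining polar factor. The boundary term at $t_j=1$ vanishes because $\log 1=0$, and at $t_j=\infty$ because $x>0$ makes $\log t_j/(\cdots)^{x+k-1}\to 0$. Each such step contributes a factor $(\omega_j(x+k-1))^{-1}$ and drops the exponent by one. After $|J|$ iterations the residual prefactor $(\prod_{j\in J}\omega_j)(x)_{|J|}$ is exactly cancelled and the pole exponent drops from $x+|J|$ back to $x$, giving precisely $I_{|J|}(x;\bm{\omega}_J,a+|\bm{\omega}_{R\setminus J}|)$. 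The degenerate case $J=\emptyset$ is handled separately: integrating out all variables returns $(a+|\bm{\omega}|)^{-x}$, consistent with the stated convention $I_0(x;\emptyset,a)=a^{-x}$.

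\textbf{Main obstacle.} The argument is computational rather than conceptual. The two points requiring care are (i) justifying the vanishing of boundary terms at $t_j=\infty$, which rests on the assumption $x>0$, and (ii) tracking the Pochhammer cancellations through the two multi-step reductions; the $L^1$ estimate that underwrites Fubini is routine.
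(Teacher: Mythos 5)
Your proof is correct and uses essentially the same two ingredients as the paper — the integration-by-parts identity that trades $\log t_j$ for a factor $1/(\omega_j t_j)$ while lowering the exponent, and the elementary integration of the log-free variables $t_i$, $i\in R\setminus J$ — the only difference being that you perform the two reductions in the opposite order (eliminating the $R\setminus J$ variables first, then the logarithms), whereas the paper does the integrations by parts first and evaluates the inner integral over $R\setminus J$ at the end. The Pochhammer bookkeeping and the boundary-term/Fubini justifications you flag all check out.
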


\begin{proof}
Let $L$ denote the left-hand side of \eqref{eq:integration by parts}.
In  the case $J=\emptyset$,
we can easily see $L=(a+|\bm{\omega}|)^{-x}$,
which coincides with $I_0(x;\bm{\omega}_{\emptyset},a+|\bm{\omega}_R|)$.
Thus \eqref{eq:integration by parts} holds in this case.

We next consider the case $J\neq \emptyset$.
By applying integration by parts, one can get that for any fixed $k\in J$,
\begin{align*}
&\int_{1}^{\infty} \frac{ \log t_k}{(\omega_1t_1+\cdots +\omega_rt_r+a)^{x+r}}dt_{k}\\
&=
\frac{1}{\omega_k(x+r-1)}\int_{1}^{\infty}\frac{1}{t_k(\omega_1t_1+\cdots+\omega_rt_r+a)^{x+r-1}}dt_k.	
\end{align*}
Hence we have
\begin{align*}
L&=\biggl(\prod_{j\in R\setminus\{k\}} \omega_j\biggr) \cdot 
(x)_{r-1}\int_{1}^{\infty}\cdots \int_{1}^{\infty}
\frac{\prod_{j\in J\setminus \{k\}} \log t_j}{t_k(\omega_1t_1+\cdots+\omega_rt_r+a)^{x+r-1}}dt_1\cdots dt_r.
\end{align*}
After continuing this process $|J|$-times, we get that
\begin{align*}
L&=\int_{1}^{\infty}\cdots \int_{1}^{\infty} \frac{(\prod_{i\in R\setminus J} \omega_i)\cdot (x)_{r-|J|}}{(\prod_{j\in J} t_j)(\omega_1t_1+\cdots+\omega_rt_r+a)^{x+r-|J|}}dt_1 \cdots dt_r\\
&=\int_{1}^{\infty}\cdots \int_{1}^{\infty}\left(\int_{1}^{\infty}\cdots \int_{1}^{\infty}
\frac{(\prod_{i\in R\setminus J} \omega_i)\cdot (x)_{r-|J|}}{(\omega_1t_1+\cdots+\omega_rt_r+a)^{x+r-|J|}}\prod_{i\in R\setminus J} dt_i\right)
\prod_{j\in J} \dfrac{dt_j}{t_j},
\end{align*}
where the inner multiple integral should be interpreted as $(\omega_1t_1+\cdots +\omega_rt_r+a)^{-x}$ if $J=R$.
One can see that
the inner multiple integral equals
$(\sum_{j\in J} \omega_j t_j+a+|\bm{\omega}_{R\setminus J}|)^{-x}$
and hence \eqref{eq:integration by parts} follows.
\end{proof}

\begin{lemma}
\label{estimate for I}
If $r$ is a positive integer, then
\[
I_r(x;\bm{\omega},a)
\le \dfrac{r^{r-x}}{(\omega_1\cdots \omega_r)^{\frac{x}{r}} x^r}.
\]
\end{lemma}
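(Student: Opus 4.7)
The plan is to bound the denominator below using AM--GM exactly as in the proof of Proposition \ref{convergence for I}, but this time tracking the constants carefully rather than discarding them. Specifically, I would start from the chain
\[
\omega_1 t_1 + \cdots + \omega_r t_r + a \ge \omega_1 t_1 + \cdots + \omega_r t_r \ge r(\omega_1\cdots\omega_r)^{1/r}(t_1\cdots t_r)^{1/r},
\]
valid on $[1,\infty)^r$. Since $x>0$, raising to the $x$-th power preserves the inequality, giving
\[
(\omega_1 t_1+\cdots+\omega_r t_r+a)^{-x} \le r^{-x}(\omega_1\cdots\omega_r)^{-x/r}(t_1\cdots t_r)^{-x/r}.
\]

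Substituting this pointwise bound into the definition \eqref{def of I} of $I_r(x;\bm{\omega},a)$ yields
\[
I_r(x;\bm{\omega},a) \le \frac{1}{r^x(\omega_1\cdots\omega_r)^{x/r}}
\int_1^\infty\!\!\cdots\!\int_1^\infty \frac{dt_1\cdots dt_r}{t_1\cdots t_r\,(t_1\cdots t_r)^{x/r}}.
\]
The integrand now factors, and the $r$-fold integral becomes a product of $r$ identical one-dimensional integrals, each equal to
\[
\int_1^\infty \frac{dt}{t^{1+x/r}} = \frac{r}{x}.
\]
Multiplying gives $(r/x)^r = r^r/x^r$, and combining with the prefactor $r^{-x}(\omega_1\cdots\omega_r)^{-x/r}$ produces exactly
\[
I_r(x;\bm{\omega},a) \le \frac{r^{r-x}}{(\omega_1\cdots\omega_r)^{x/r}\,x^r},
\]
as desired.

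There is no real obstacle here; the proof is essentially a sharpening of the convergence argument already given. The only thing to be careful about is keeping the constant $r(\omega_1\cdots\omega_r)^{1/r}$ from AM--GM intact (the earlier proof of Proposition \ref{convergence for I} absorbs it into a $\ll$-bound), and separating the factor $r^{-x}(\omega_1\cdots\omega_r)^{-x/r}$ cleanly from the factor $(t_1\cdots t_r)^{-x/r}$ before integrating.
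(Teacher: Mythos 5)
Your proof is correct and is essentially identical to the paper's own argument: both apply the AM--GM bound \eqref{eq:AGM}, pull out the constant $r^{-x}(\omega_1\cdots\omega_r)^{-x/r}$, and evaluate the resulting product of one-dimensional integrals as $(r/x)^r$. No further comment is needed.
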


\begin{proof}
By using \eqref{eq:AGM}, we have
\[
I_r(x;\bm{\omega},a)
\leq \dfrac{1}{r^x (\omega_1\cdots \omega_r)^{\frac{x}{r}}}
\int_1^\infty \cdots \int_1^\infty
\dfrac{1}{t_1\cdots t_r (t_1\cdots t_r)^{\frac{x}{r}}}dt_1\cdots dt_r.
\]
The multiple integral in the right-hand side is equal to
\[
\left(\int_1^\infty \dfrac{1}{t^{1+\frac{x}{r}}} dt\right)^r
=\left(\dfrac{r}{x}\right)^r.
\]
Hence we obtain Lemma \ref{estimate for I}.
\end{proof}

\begin{proposition}
\label{relation between M and I}
The multiple harmonic series \eqref{function M} can be expressed in  terms of its integral analogues in the form
\[
M_r(x;\bm{\omega},a)
=\sum_{k=0}^r 
\gamma^k
\sum_{\substack{J\subset R \\ |J|=r-k}}
I_{r-k}(x;\bm{\omega}_J, a+|\bm{\omega}_{R\setminus J}|)
+O(x).
\]
\end{proposition}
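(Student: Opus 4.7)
My plan is to combine Abel's representation (Lemma~\ref{Abel}) with the classical asymptotic
\[
\sum_{n\le t}\frac{1}{n}=\log t+\gamma+E(t),\qquad E(t)=O(1/t),
\]
inserted into each of the $r$ factors appearing in \eqref{eq:Abel}. Expanding $\prod_{i=1}^{r}(\log t_i+\gamma+E(t_i))$ splits the multiple integral into a \emph{main part}, in which every factor contributes either $\log t_i$ or $\gamma$, and an \emph{error part}, in which $E(t_i)$ is chosen for at least one index.

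For the main part, the identity $\prod_{i=1}^{r}(\log t_i+\gamma)=\sum_{J\subseteq R}\gamma^{|R\setminus J|}\prod_{j\in J}\log t_j$ together with a termwise application of Lemma~\ref{integration by parts} gives
\[
\sum_{J\subseteq R}\gamma^{|R\setminus J|}\,I_{|J|}(x;\bm{\omega}_J,\,a+|\bm{\omega}_{R\setminus J}|),
\]
and reindexing by $k=|R\setminus J|$ recovers exactly the main term on the right-hand side of the proposition. This step is a direct consequence of Lemma~\ref{integration by parts} once the expansion has been written out.

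The main obstacle is to show that each error contribution is $O(x)$. For a nonempty $S\subseteq R$ (the indices where $E(t_i)$ is selected) and a subset $T\subseteq R\setminus S$ (the indices where $\log t_i$ is selected), my plan is to first apply the integration-by-parts identity from the proof of Lemma~\ref{integration by parts} to each $i\in T$, which converts $\log t_i$ into $1/t_i$, lowers the denominator exponent from $x+r$ to $x+r-|T|$, and replaces the prefactor $\omega_1\cdots\omega_r(x)_r$ by $(\prod_{i\in R\setminus T}\omega_i)(x)_{r-|T|}$. Since $|S|\ge 1$ forces $r-|T|\ge 1$, this new prefactor is still $O(x)$. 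After then estimating $|E(t_i)|\le C/t_i$, it remains to show that
\[
\int_1^\infty\!\!\cdots\int_1^\infty\frac{\prod_{i\in S\cup T}(1/t_i)}{(\omega_1 t_1+\cdots+\omega_r t_r+a)^{x+r-|T|}}\,dt_1\cdots dt_r
\]
stays bounded as $x\to 0^+$. I would obtain this uniform bound by first performing the $|R\setminus(S\cup T)|$ elementary integrations over the variables outside $S\cup T$ (which drops the exponent to $x+|S|$), and then applying the substitution $u_i=\log t_i$ for $i\in S\cup T$ together with the elementary inequality $\sum_i\omega_i e^{u_i}\ge(\min_i\omega_i)e^{\max_i u_i}$; this reduces the remaining integral to $\int_0^\infty u^{|S\cup T|-1}e^{-u(x+|S|)}\,du=\Gamma(|S\cup T|)/(x+|S|)^{|S\cup T|}$, which is uniformly bounded near $x=0$ precisely because $|S|\ge 1$. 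Combined with the $O(x)$ prefactor and summed over all nonempty $S$ and all $T\subseteq R\setminus S$, this yields the desired $O(x)$ error bound and completes the proof.
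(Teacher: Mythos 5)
Your proposal is correct and follows essentially the same route as the paper: Abel's representation (Lemma \ref{Abel}), the expansion $\sum_{n\le t}1/n=\log t+\gamma+O(1/t)$, Lemma \ref{integration by parts} for the main terms, and integration by parts on the surviving $\log t_i$ factors so that the Pochhammer prefactor $(x)_{r-|T|}$ with $r-|T|\ge|S|\ge1$ supplies the $O(x)$. The only (immaterial) difference is in bounding the residual error integral uniformly near $x=0$: the paper identifies it with $\omega_i x\, I_{|J|+1}(x+1;\bm{\omega}_{J\sqcup\{i\}},a+|\bm{\omega}_{R\setminus(J\sqcup\{i\})}|)$ and invokes Lemma \ref{estimate for I}, whereas you estimate it directly via the substitution $u_i=\log t_i$ and the bound $\sum_i\omega_ie^{u_i}\ge(\min_i\omega_i)e^{\max_iu_i}$ (where, incidentally, $\int_{[0,\infty)^p}e^{-c\max_iu_i}\,du=\Gamma(p+1)/c^{p}$ rather than $\Gamma(p)/c^{p}$ --- a harmless constant).
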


\begin{proof}
We begin with the expression \eqref{eq:Abel} of $M_r(x;\bm{\omega},a)$ given by 
Lemma \ref{Abel}.   On the right-hand side of \eqref{eq:Abel},
there are factors
$$
\left(\sum_{n_1\leq t_1}\frac{1}{n_1}\right)	\cdots \left(\sum_{n_r\leq t_r}
	\frac{1}{n_r}\right).
$$
We apply $\sum_{n\le t} n^{-1}=\log t +\gamma+O(t^{-1})$ to each factor of the above.
Then we encounter the
product 
$$ \left(\log t_1+\gamma\right) \left(\log t_2+\gamma\right)\cdots  \left(\log t_r+\gamma\right), $$ 
which can be expanded in the form
\[
\sum_{k=0}^r \gamma^k  \sum_{\substack{J\subset R\\ |J|=r-k}} \prod_{j\in J} \log t_j.
\]
Therefore from Lemma \ref{integration by parts}, it follows that
\begin{align*}
M_r(x;\bm{\omega},a)
&=\sum_{k=0}^r \gamma^k \sum_{\substack{J\subset R\\ |J|=r-k}}
I_{r-k}(x;\bm{\omega}_J,a+|\bm{\omega}_{R\setminus J}|)\\
&\quad
+\omega_1\cdots \omega_r(x)_r
\int_{1}^{\infty} \cdots \int_{1}^{\infty}\frac{g(t_1,\ldots,t_r)}{(\omega_1t_1+\cdots +\omega_rt_r+a)^{x+r}}dt_1\cdots dt_r\,,\notag
\end{align*}
where $g(t_1,\ldots,t_r)$ is a quantity which satisfies the estimate
\begin{equation}
\label{g_est}
g(t_1,\ldots,t_r)
=O\left(\sum_{J\subsetneq R}
\sum_{i \in R\setminus J} \dfrac{1}{t_i} \prod_{j\in J} \log t_j
\right).
\end{equation}
Now, in order
to complete the proof of Proposition \ref{relation between M and I}, the only remaining task is 
to show that
\begin{align}
	\label{theorem1,exp1}
\omega_1\cdots \omega_r (x)_r\int_{1}^{\infty} \cdots \int_{1}^{\infty}\frac{g(t_1,\ldots,t_r)}{(\omega_1t_1+\cdots +\omega_rt_r+a)^{x+r}}dt_1\cdots dt_r=O(x)\,.
\end{align}
To justify \eqref{theorem1,exp1}, it is enough to verify that
\begin{equation}
	\label{theorem1,exp3}
\omega_1\cdots \omega_r(x)_r
\int_{1}^{\infty} \cdots \int_{1}^{\infty}
\frac{\prod_{j\in J} \log t_j}{t_i(\omega_1t_1+\cdots +\omega_r t_r+a)^{x+r}}dt_1\cdots dt_r=O(x)
\end{equation}
for $J\subsetneq R$ and $i\in R\setminus J$.
In the same way as the proof of Lemma \ref{integration by parts},
we find that the left-hand side of \eqref{theorem1,exp3} is equal to
 $\omega_i x I_{|J|+1}(x+1;\bm{\omega}_{J\sqcup \{i\}},a+|\bm{\omega}_{R\setminus(J\sqcup\{i\})}|)$.
Hence, by Lemma \ref{estimate for I}, we see
\[
\omega_i x I_{|J|+1}(x+1;\bm{\omega}_{J\sqcup \{i\}},a+|\bm{\omega}_{R\setminus(J\sqcup\{i\})}|)
\le
\dfrac{(|J|+1)^{|J|-x}\omega_i^{\frac{|J|-x}{|J|+1}} x}{(\prod_{j\in J} \omega_j)^{\frac{x+1}{|J|+1}}  (x+1)^{|J|+1}}
=O(x).
\]
This completes the proof of \eqref{theorem1,exp3}, and hence 
the proof of Proposition \ref{relation between M and I}.
\end{proof}

We now show Theorem \ref{asymptotic for M}.
We first remark that Theorem \ref{asymptotic for I} is true for $r=0$
if we interpret $I_0(x;\emptyset, a)$ as $a^{-x}$ (as in the statement of
Lemma \ref{integration by parts}).
Applying Proposition \ref{relation between M and I} and Theorem \ref{asymptotic for I} with this remark, we obtain
\begin{align*}
M_r(x;\bm{\omega},a)
&=
\sum_{k=0}^r \gamma^k \sum_{\substack{J\subset R\\ |J|=r-k}}
\dfrac{e^{-\gamma x}}{\Gamma(x+1)}
\sum_{l=0}^{r-k} (-1)^l \Lambda_l(\bm{\omega}_J)
\dfrac{(r-k-l)!}{x^{r-k-l}}+O(x)\\
&=\dfrac{e^{-\gamma x}}{\Gamma(x+1)}
\sum_{l=0}^r \sum_{k=0}^{r-l}
\dfrac{(-1)^l (r-k-l)! \gamma^k}{x^{r-k-l}}
\sum_{\substack{J\subset R\\ |J|=r-k}}
\Lambda_l(\bm{\omega}_J)+O(x).
\end{align*}
Since
\begin{align*}
\sum_{\substack{J\subset R\\ |J|=r-k}}
\Lambda_l(\bm{\omega}_J)
=\sum_{\substack{J\subset R\\ |J|=r-k}}
\sum_{\substack{I\subset J\\ |I|=l}}
\prod_{i\in I} \log \omega_i
=\sum_{\substack{I\subset R\\ |I|=l}}
\prod_{i\in I}\log \omega_i
\sum_{\substack{I\subset J\subset R\\ |J|=r-k}} 1
=\binom{r-l}{r-k-l} \Lambda_l(\bm{\omega}),
\end{align*}
we have
\[
M_r(x;\bm{\omega},a)
=\dfrac{e^{-\gamma x}}{\Gamma(x+1)}
\sum_{l=0}^r  (-1)^l \Lambda_l(\bm{\omega})
\dfrac{(r-l)!}{x^{r-l}}
\sum_{k=0}^{r-l} \dfrac{(\gamma x)^k}{k!}
+O(x).
\]
Note that
\[
\sum_{k=0}^{r-l} \dfrac{(\gamma x)^k}{k!}=e^{\gamma x}+O(x^{r-l+1}).
\]
This completes the proof of Theorem \ref{asymptotic for M}.

We next present the second proof of Theorem \ref{asymptotic for M},
which is similar to that of Theorem \ref{asymptotic for I}
and can be regarded as a generalization of the proof of Theorem 1.1 in \cite{DSS24}.
Using  the integral expression of $\Gamma(x)$, we have
\begin{align*}
M_r(x;\bm{\omega},a)
&=
\sum_{n_1,\cdots,n_r\ge 1} \dfrac{1}{n_1\cdots n_r}\cdot
\dfrac{1}{\Gamma(x)}
\int_0^\infty e^{-(\omega_1n_1+\cdots+\omega_rn_r+a)t} t^{x-1}\, dt\\[5pt]
&=\dfrac{1}{\Gamma(x)}
\int_0^\infty \left(\sum_{n_1=1}^\infty \dfrac{e^{-\omega_1n_1t}}{n_1}\right)
\cdots \left(\sum_{n_r=1}^\infty \dfrac{e^{-\omega_rn_rt}}{n_r}\right)
e^{-at} t^{x-1}\, dt\\[5pt]
&=\dfrac{(-1)^r}{\Gamma(x)}
\int_0^\infty \prod_{i=1}^r \log (1-e^{-\omega_it})
\cdot e^{-at}t^{x-1}\, dt.
\end{align*}
Since $\log (1-e^{-\omega_it})\ll e^{-\omega_it}$ as $t\to\infty$,
we see
\[
M_r(x;\bm{\omega},a)
=\dfrac{(-1)^r}{\Gamma(x)}
\int_0^1 \prod_{i=1}^r \log (1-e^{-\omega_it})
\cdot e^{-at}t^{x-1}\, dt+O(x).
\]
Moreover, we apply
$\log(1-e^{-\omega_i t})=\log \omega_i t+O(t)$
and $e^{-at}=1+O(t)$ as $t\to0^+$.
As a result, we obtain
\begin{align*}
M_r(x;\bm{\omega},a)
&=\dfrac{(-1)^r}{\Gamma(x)}
\int_0^1 \prod_{i=1}^r (\log \omega_i +\log t)\cdot t^{x-1}\, dt+O(x)\\
&=\dfrac{(-1)^r}{\Gamma(x)}
\int_0^1 \biggl(\sum_{k=0}^r \Lambda_k(\bm{\omega})
(\log t)^{r-k}\biggr) t^{x-1}\, dt+O(x)\\
&=\dfrac{(-1)^r}{\Gamma(x)}
\sum_{k=0}^r \Lambda_k(\bm{\omega})
\cdot (-1)^{r-k}\dfrac{ (r-k)!}{x^{r-k+1}}+O(x),
\end{align*}
which gives Theorem \ref{asymptotic for M}.

The second proof appears simpler than the first.
However, in the second proof, the involvement of the Euler constant
in the behavior of $M_r(x;\bm{\omega},a)$ around $x=0$
is cleverly obscured.
In contrast, the first proof reveals the contribution of the Euler constant  through Proposition \ref{relation between M and I}.
From this perspective, the first proof is also intriguing.
 
At the end of this section,
we prove the asymptotic formula \eqref{eq:asymptotic for MZF}
for the Euler-Zagier multiple zeta function
as an application of Theorem \ref{asymptotic for M}.
\begin{proposition}
\label{M-MZF}
For a positive real number $x$, we have
\begin{equation}
\label{eq:M-MZF}
M_r(x;(\underbrace{1,\ldots,1}_{r}),0)=
r!\zeta_{EZ,r}(\underbrace{1,\ldots,1}_{r-1},x+1)
\end{equation}
\end{proposition}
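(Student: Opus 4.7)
The plan is to symmetrize $M_r(x;(1,\ldots,1),0)$ using a classical partial-fraction identity, and then to convert the resulting sum into one over strictly increasing tuples via the change of variables $m_i=n_1+\cdots+n_i$.

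The key tool I would invoke is the identity
$$\frac{1}{n_1 n_2 \cdots n_r}=\sum_{\sigma \in \mathfrak{S}_r}\prod_{i=1}^{r}\frac{1}{n_{\sigma(1)}+n_{\sigma(2)}+\cdots+n_{\sigma(i)}},$$
valid for any positive reals $n_1,\ldots,n_r$; it is proved by induction on $r$ by factoring out $1/(n_1+\cdots+n_r)$ (the term with $i=r$), fixing the value of $\sigma(r)$, and applying the inductive hypothesis to the remaining $r-1$ variables. Substituting this identity into the definition \eqref{function M} of $M_r(x;(1,\ldots,1),0)$, the factor corresponding to $i=r$ equals $1/(n_1+\cdots+n_r)$ and merges with the existing $(n_1+\cdots+n_r)^{-x}$ to produce $(n_1+\cdots+n_r)^{-(x+1)}$.

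Next, for each fixed $\sigma\in\mathfrak{S}_r$, the change of summation indices $n_i'=n_{\sigma(i)}$ shows that all $r!$ contributions are identical, each equal to
$$S:=\sum_{n_1,\ldots,n_r\geq 1}\frac{1}{n_1(n_1+n_2)\cdots(n_1+\cdots+n_{r-1})(n_1+\cdots+n_r)^{x+1}}.$$
Finally, the assignment $m_i=n_1+\cdots+n_i$ sets up a bijection between tuples of positive integers $(n_1,\ldots,n_r)$ and strictly increasing tuples $1\leq m_1<m_2<\cdots<m_r$ of positive integers, under which $S$ becomes exactly $\zeta_{EZ,r}(1,\ldots,1,x+1)$. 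Combining these steps yields \eqref{eq:M-MZF}.

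The only delicate point will be justifying the interchange of the finite sum over $\mathfrak{S}_r$ with the infinite summation after the partial-fraction expansion; since every summand produced is a sum of positive quantities and the original series converges absolutely for $x>0$ by Proposition \ref{convergence}, Tonelli's theorem makes all rearrangements legitimate. The main conceptual step is the star-shaped partial-fraction identity itself; everything else is bookkeeping.
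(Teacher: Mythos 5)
Your proposal is correct and is essentially the same argument as the paper's: the paper carries out your symmetrization iteratively, at each stage inserting a factor $(n_1+\cdots+n_k)/(n_1+\cdots+n_k)$ and using the symmetry of the summation in the first $k$ indices to collapse the expanded numerator into a factor of $k$, rather than invoking the one-shot partial-fraction identity $\frac{1}{n_1\cdots n_r}=\sum_{\sigma\in\mathfrak{S}_r}\prod_{i=1}^{r}\bigl(n_{\sigma(1)}+\cdots+n_{\sigma(i)}\bigr)^{-1}$. Both routes arrive at the same nested sum $S$ and conclude with the substitution $m_i=n_1+\cdots+n_i$, and since all summands are positive the rearrangements are justified exactly as you indicate.
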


The relation \eqref{eq:M-MZF} for $x\in\Zb_{\ge 1}$
has been shown by Hoffman \cite[Corollary 4.2]{Hoff92}.

\begin{proof}[Proof of Proposition \ref{M-MZF}]
It follows from symmetry that
\begin{align*}
M_r(x;(\underbrace{1,\ldots,1}_{r}),0)
&=\sum_{n_1,\ldots,n_r\ge 1}
\dfrac{n_1+\cdots +n_r}{n_1\cdots n_r(n_1+\cdots +n_r)^{x+1}}\\
&=r \sum_{n_1,\ldots,n_r\ge 1}
\dfrac{1}{n_1\cdots n_{r-1}(n_1+\cdots +n_r)^{x+1}}.
\end{align*}
Similarly, we have
\begin{align*}
&M_r(x;(\underbrace{1,\ldots,1}_{r}),0)\\
&=r \sum_{n_1,\ldots,n_r\ge 1}
\dfrac{n_1+\cdots +n_{r-1}}
{n_1\cdots n_{r-1}(n_1+\cdots+n_{r-1})(n_1+\cdots +n_r)^{x+1}}\\
&=r(r-1) \sum_{n_1,\ldots,n_r\ge 1}
\dfrac{1}
{n_1\cdots n_{r-2}(n_1+\cdots+n_{r-1})(n_1+\cdots +n_r)^{x+1}}.
\end{align*}
Repeating this procedure, we obtain
\begin{align*}
&M_r(x;(\underbrace{1,\ldots,1}_{r}),0)\\
&=r!
\sum_{n_1,n_2,\ldots,n_r\ge 1}
\dfrac{1}{n_1(n_1+n_2) \cdots (n_1+\cdots +n_{r-1})
(n_1+\cdots +n_r)^{x+1}}.
\end{align*}
Finally, the right-hand side can be expressed as 
\[
r!
\sum_{1\le m_1<m_2<\cdots <m_r}
\dfrac{1}{m_1m_2\cdots m_{r-1}m_r^{x+1}}
=r! \zeta_{EZ,r}(\underbrace{1,\ldots,1}_{r-1},x+1).
\]
This confirms \eqref{eq:M-MZF}.
\end{proof}

Combining Proposition \ref{M-MZF} and Theorem \ref{asymptotic for M},
we immediately obtain \eqref{eq:asymptotic for MZF}.

\section{Complete asymptotic expansion via the refined method}
\label{sec:refinement}

Now we proceed to more elaborate studies on $I_r(x;\bm{\omega}, a)$. Our main aim in this section is to prove Theorem \ref{v9 first complete asymptotic expansion of integral}. For that purpose, we have to consider the following series. Let $(x)_r$ denote the Pochhammer symbol given by $(x)_r=x(x+1)\cdots (x+r-1)$
for $r\in\Zb_{\ge 1}$, and then define
\begin{equation}
	\label{def of S}
	S_r(x,\bm{\omega})
	=\sum_{k_1,\ldots,k_r\ge 1}
	\dfrac{(x)_{k_1+\cdots+k_r} \omega_1^{k_1}\cdots \omega_r^{k_r}}
	{k_1\cdots k_r \cdot k_1!\cdots k_r!}
\end{equation}
for $r\in\Zb_{\ge 1}$, $x>0$,
and $\bm{\omega}=(\omega_1,\ldots,\omega_r) \in \Cb^r$
with $|\bm{\omega}|<1$.
We first show

\begin{proposition}
The series $S_r(x,\bm{\omega})$ defined by \eqref{def of S} converges absolutely.
\end{proposition}

\begin{proof}
Using the multinomial theorem, we have
\begin{align*}
\sum_{k_1,\ldots,k_r\ge 1}
\dfrac{(x)_{k_1+\cdots+k_r} |\omega_1|^{k_1}\cdots |\omega_r|^{k_r}}
{k_1\cdots k_r \cdot k_1!\cdots k_r!}
&\le \sum_{m=r}^\infty
\dfrac{(x)_m}{m!}
\sum_{\substack{k_1,\ldots,k_r\ge 1\\ k_1+\cdots+k_r=m}}
\dfrac{m!}{k_1!\cdots k_r!} |\omega_1|^{k_1}\cdots |\omega_r|^{k_r}\\
&\le
\sum_{m=r}^\infty \dfrac{(x)_m |\bm{\omega}|^m}{m!}.
\end{align*}
Here we observe that
\begin{align}
\label{Pochhammer}
\dfrac{(x)_m}{m!}
&=\dfrac{x}{m} \left(1+\dfrac{x}{1}\right)
\left(1+\dfrac{x}{2}\right)\cdots \left(1+\dfrac{x}{m-1}\right)\\
&=\dfrac{x}{m} \exp\left(\sum_{h=1}^{m-1} \dfrac{x}{h}\right)
\prod_{h=1}^{m-1} \left(1+\dfrac{x}{h}\right) e^{-\frac{x}{h}}
\notag 
\end{align}
When $m\to\infty$, the product part is $O(1)$, and 
$$
\exp\left(\sum_{h=1}^{m-1} \dfrac{x}{h}\right)
=\left(\exp(\log m+O(1))\right)^x \ll m^x,
$$
hence
\begin{align}\label{Pochhammer-2}
\dfrac{(x)_m}{m!}\ll m^{x-1}
\end{align}
as $m\to\infty$.
Therefore the above positive series is
$\displaystyle \ll \sum_{m=r}^\infty m^{x-1}|\bm{\omega}|^m<\infty$ when $|\bm{\omega}|<1$.
\end{proof}

%
%
Now the integral $I_r(x;\bm{\omega},a)$ can be expressed
as a finite sum involving series of the type \eqref{def of S} as follows.
\begin{theorem}
	\label{expression by S}
	Assume that $\bm{\omega}$ is an $r$-tuple of positive real numbers
	with $|\bm{\omega}|<a$. Then
	\begin{equation}
		\label{eq:expression by S}
		I_r(x;\bm{\omega},a)
		=
		(-1)^r \dfrac{e^{-\gamma x}}{\Gamma(x)}
		\sum_{A,B,C}
		\biggl(\prod_{i\in A} \log \omega_i\biggr)
		\biggl(\dfrac{\partial}{\partial x}\biggr)^{|B|}
		\dfrac{\Gamma(x)}{a^x e^{-\gamma x}}
		S_{|C|}\biggl(x,-\dfrac{\bm{\omega}_C}a\biggr),
	\end{equation}
	where $A$, $B$, and $C$ are disjoint subsets of $R$ satisfying $A\sqcup B\sqcup C=R$
	and $S_0(x,\emptyset)=1$.
\end{theorem}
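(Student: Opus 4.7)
The plan is to start from the integral representation
\[
I_r(x;\bm{\omega},a)=\dfrac{1}{\Gamma(x)}\int_0^\infty \prod_{i=1}^r\Gamma(0,\omega_i u)\cdot e^{-au}\,u^{x-1}\,du
\]
already derived at \eqref{expression by incomplete Gamma}, and to decompose each $\Gamma(0,\omega_i u)$ via \eqref{incomplete gamma} into the three-term splitting
\[
\Gamma(0,\omega_i u)=(-\log\omega_i)+(-\log u-\gamma)+\tilde f_i(u),\qquad \tilde f_i(u)=-\sum_{n=1}^\infty\dfrac{(-\omega_i u)^n}{n\cdot n!}.
\]
Expanding the product $\prod_{i=1}^r\Gamma(0,\omega_i u)$ against this splitting distributes every index $i\in R$ into exactly one of three classes, and so produces a finite sum indexed by ordered triples of disjoint subsets $(A,B,C)$ with $A\sqcup B\sqcup C=R$: the factor associated with $A$ contributes $\prod_{i\in A}(-\log\omega_i)$, the factor associated with $B$ contributes $(-1)^{|B|}(\log u+\gamma)^{|B|}$, and the factor associated with $C$ contributes $(-1)^{|C|}\sum_{\vec k\ge 1}\prod_{i\in C}(-\omega_i u)^{k_i}/(k_i\cdot k_i!)$.

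Next I would interchange these sums with the $u$-integration. Since $\prod_{i\in C}|\tilde f_i(u)|\le e^{|\bm{\omega}_C|u}$, the hypothesis $|\bm{\omega}|<a$ supplies exponential decay of the integrand at infinity, while $u^{x-1}$ is integrable at $0$, and Fubini's theorem applies. For each fixed $\vec k=(k_i)_{i\in C}$ with $K=\sum_{i\in C}k_i$, the remaining one-variable integral is evaluated by the same differentiation trick used in the proof of Theorem \ref{asymptotic for I}; since $K$ is constant in $x$, the factor $e^{\pm\gamma K}$ pulls through the derivative and cancels, giving
\[
\int_0^\infty(\log u+\gamma)^{|B|}u^{x+K-1}e^{-au}\,du=e^{-\gamma x}\biggl(\dfrac{\partial}{\partial x}\biggr)^{|B|}\dfrac{e^{\gamma x}\Gamma(x+K)}{a^{x+K}}.
\]

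Pulling $e^{-\gamma x}(\partial/\partial x)^{|B|}e^{\gamma x}$ back outside the $\vec k$-sum and using $\Gamma(x+K)=\Gamma(x)(x)_K$ together with the defining series \eqref{def of S}, the inner sum consolidates as
\[
\sum_{\vec k\ge 1}\dfrac{(-1)^K\prod_{i\in C}\omega_i^{k_i}}{\prod_{i\in C}k_i\cdot k_i!}\cdot\dfrac{\Gamma(x+K)}{a^{x+K}}=\dfrac{\Gamma(x)}{a^x}\,S_{|C|}\!\left(x,-\dfrac{\bm{\omega}_C}{a}\right),
\]
where $|\bm{\omega}|<a$ guarantees $|-\bm{\omega}_C/a|<1$ so that $S_{|C|}$ converges. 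Collecting the overall sign $(-1)^{|A|+|B|+|C|}=(-1)^r$ and rewriting $e^{\gamma x}\Gamma(x)/a^x=\Gamma(x)/(a^xe^{-\gamma x})$ yields \eqref{eq:expression by S}. The main obstacle is purely organisational: the interchange of sum and integral needs the strict inequality $|\bm{\omega}|<a$ to provide absolute convergence in \emph{both} the $u$-integral and the limiting series $S_{|C|}$, and one must track carefully the three independent sources of signs (the minus signs in each summand of the three-term splitting, the $(-1)^K$ generated by the power series of $\tilde f_i$, and their absorption into $(-\omega_i/a)^{k_i}$) along with the constant-in-$x$ factors $e^{\pm\gamma K}$ and $a^{-K}$ as they are commuted past the differentiation with respect to $x$.
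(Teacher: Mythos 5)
Your proposal follows essentially the same route as the paper: the same starting point \eqref{expression by incomplete Gamma}, the same three-term splitting of each $\Gamma(0,\omega_i u)$ via \eqref{incomplete gamma} indexed by ordered triples $(A,B,C)$, the same justification of the sum--integral interchange from $a>|\bm{\omega}|\ge|\bm{\omega}_C|$, and the same evaluation of the $u$-integral by the $e^{-\gamma x}(\partial/\partial x)^{|B|}e^{\gamma x}$ device. The sign bookkeeping and the consolidation of the $\vec k$-sum into $\frac{\Gamma(x)}{a^x}S_{|C|}(x,-\bm{\omega}_C/a)$ via $\Gamma(x+K)=\Gamma(x)(x)_K$ are all correct.

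The one genuine gap is the sentence ``Pulling $e^{-\gamma x}(\partial/\partial x)^{|B|}e^{\gamma x}$ back outside the $\vec k$-sum'': this is an interchange of differentiation with an \emph{infinite} summation, distinct from the Fubini step you do justify, and you assert it without argument. After the termwise integral evaluation you have a series $\sum_{\vec k}(\partial/\partial x)^{|B|}[\,\cdots\,]$ and you need it to equal $(\partial/\partial x)^{|B|}\sum_{\vec k}[\,\cdots\,]$; the paper treats this as the main remaining analytic task and proves it by the Leibniz rule together with the bound
\[
\biggl(\dfrac{\partial}{\partial x}\biggr)^{b}(x)_{n}\le n(n-1)\cdots(n-b+1)\,(x+b)_{n-b}\ll n!\,(n-b)^{\alpha+b-1},
\]
which, combined with the multinomial estimate and $|\bm{\omega}_C|<a$, gives absolute and locally uniform convergence of the differentiated series in $x$. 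Your proposal would be complete once this estimate (or an equivalent uniform-convergence argument for termwise differentiation of $S_{|C|}(x,-\bm{\omega}_C/a)\,\Gamma(x)a^{-x}e^{\gamma x}$) is supplied; as written, that step is missing.
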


Note that
this Theorem \ref{expression by S} gives a refinement of Theorem \ref{asymptotic for I}.
In fact, since
\[
\biggl(\dfrac{\partial}{\partial x}\biggr)^{|B|}
\dfrac{\Gamma(x)}{a^x e^{-\gamma x}}
S_{|C|}\biggl(x,-\dfrac{\bm{\omega}_C}a\biggr)
=
\begin{cases}
	\dfrac{(-1)^{|B|}|B|!}{x^{|B|+1}}+O(1) & \mbox{if $C=\emptyset$},\\[8pt]
	O(1) & \mbox{if $C\neq\emptyset$},
\end{cases}
\]
we see that Theorem \ref{expression by S} implies
Theorem \ref{asymptotic for I} when $|\bm{\omega}|<a$.

\begin{proof}[Proof of Theorem \ref{expression by S}]
Applying \eqref{incomplete gamma} to the incomplete gamma functions
appearing on the right-hand side of \eqref{expression by incomplete Gamma},
we have
\begin{align*}
\prod_{i=1}^r \Gamma(0,\omega_iu)
&=(-1)^r \prod_{i=1}^r
\left(\log \omega_i+\gamma+\log u +
\sum_{k=1}^\infty \dfrac{(-\omega_i u)^k}{k\cdot k!}\right)\\
&=(-1)^r \sum_{A,B,C}
\biggl(\prod_{i\in A} \log \omega_i \biggr)
(\gamma+\log u)^{|B|}
\prod_{j\in C} \sum_{k_j=1}^\infty
\dfrac{(-\omega_ju)^{k_j}}{k_j\cdot k_j!}.
\end{align*}
Substituting this into \eqref{expression by incomplete Gamma},
we have
\begin{align*}
&I_r(x;\bm{\omega},a)\\
&=\dfrac{(-1)^r}{\Gamma(x)}
\int_0^\infty
\biggl[\sum_{A,B,C} \biggl(\prod_{i\in A} \log \omega_i \biggr)
(\gamma+\log u)^{|B|}
\biggl(
\prod_{j\in C} \sum_{k_j=1}^\infty
\dfrac{(-\omega_ju)^{k_j}}{k_j\cdot k_j!}
\biggr)\biggr]
e^{-au} u^{x-1}du.
\end{align*}
We can show that it is possible to interchange the order of integration and summation
here.    In fact, since
\[
\sum_{k_j=1}^\infty
\left|\dfrac{(-\omega_ju)^{k_j}}{k_j\cdot k_j!}\right|
\le \sum_{k_j=1}^\infty \dfrac{(\omega_ju)^{k_j}}{k_j!}
\le e^{\omega_j u},
\]
we can evaluate the integral
\begin{align*}
&\int_0^\infty
|\gamma+\log u|^{|B|}
\left(
\prod_{j\in C} \sum_{k_j=1}^\infty
\left|\dfrac{(-\omega_ju)^{k_j}}{k_j\cdot k_j!}\right|
\right)
e^{-au} u^{x-1}du\\
&\le
\int_0^\infty (\gamma+|\log u|)^{|B|} e^{-(a-|\bm{\omega}_C|)u} u^{x-1}du\\
&<\infty
\end{align*}
because  $a>|\bm{\omega}|\ge |\bm{\omega}_C|$.
This confirms the validity of interchanging integration and summation.

As a result, we encounter the integral
\[
\int_0^\infty (\gamma+\log u)^{|B|} e^{-au} u^{x+\sum_{j\in C} k_j-1} du,
\]
which is equal to
\begin{align*}
&e^{-\gamma x} \int_0^\infty
\left(\dfrac{\partial}{\partial x}\right)^{|B|}
e^{-au} e^{\gamma x} u^{x+\sum_{j\in C} k_j-1}du\\
&=e^{-\gamma x} 
\left(\dfrac{\partial}{\partial x}\right)^{|B|}
e^{\gamma x} \int_0^\infty e^{-au} u^{x+\sum_{j\in C} k_j-1}du\\
&=e^{-\gamma x} 
\left(\dfrac{\partial}{\partial x}\right)^{|B|}
e^{\gamma x}
\dfrac{\Gamma(x+\sum_{j\in C} k_j)}{a^{x+\sum_{j\in C} k_j}}.
\end{align*}
Hence we obtain
\begin{align*}
I_r(x;\bm{\omega},a)
&=(-1)^r \dfrac{e^{-\gamma x}}{\Gamma(x)}
\sum_{A,B,C} \left(\prod_{i\in A} \log \omega_i\right)\\
&\qquad \times
\sum_{k_{j_1},\ldots,k_{j_s}=1}^\infty
\left(\dfrac{\partial}{\partial x}\right)^{|B|}
\dfrac{\Gamma(x)}{a^x e^{-\gamma x}}
\dfrac{(x)_{k_{j_1}+\cdots +k_{j_s}} (-\omega_{j_1}/a)^{k_{j_1}} \cdots (-\omega_{j_s}/a)^{k_{j_s}}}
{k_{j_1}\cdots k_{j_s} \cdot k_{j_1}!\cdots k_{j_s}!},
\end{align*}
where $s=|C|$ and $C=\{j_1,\ldots,j_s\}$.
Therefore, to prove Theorem \ref{expression by S},
it is sufficient to demonstrate that differentiation and summation can be interchanged.
Moreover, applying the Leibniz rule,
the remaining task is to show that
the series
\begin{equation}
\label{eq:interchange}
\sum_{n= |B|+1}^\infty
\left(\dfrac{\partial}{\partial x}\right)^{b}(x)_{n}
\sum_{\substack{k_{j_1},\ldots,k_{j_s}\ge1\\ k_{j_1}+\cdots+k_{j_s}=n}}
\dfrac{(-\omega_{j_1}/a)^{k_{j_1}} \cdots (-\omega_{j_s}/a)^{k_{j_s}}}
{k_{j_1}\cdots k_{j_s} \cdot k_{j_1}!\cdots k_{j_s}!},
\end{equation}
where $b=1,2,\ldots,|B|$, converges and that
it is possible to interchange differentiation and summation for
\eqref{eq:interchange}.

Let $\alpha$ be any positive real number and suppose $0<x<\alpha$.
For $n\ge b+1$, we have
\begin{align*}
\left(\dfrac{\partial}{\partial x}\right)^b
(x)_{n}
&\le
n(n-1)\cdots (n-b+1) \cdot (x+b)_{n-b}\\
&\ll n! (n-b)^{\alpha+b-1},
\end{align*}
because the first inequality can be shown by induction on $b$, while the second inequality
follows from \eqref{Pochhammer-2}.
Therefore
\begin{align*}
&\sum_{n= |B|+1}^\infty
\biggl|
\left(\dfrac{\partial}{\partial x}\right)^{b}(x)_{n}
\sum_{\substack{k_{j_1},\ldots,k_{j_s}\ge1\\ k_{j_1}+\cdots+k_{j_s}=n}}
\dfrac{(-\omega_{j_1}/a)^{k_{j_1}} \cdots (-\omega_{j_s}/a)^{k_{j_s}}}
{k_{j_1}\cdots k_{j_s} \cdot k_{j_1}!\cdots k_{j_s}!}\biggr|\\
&\ll
\sum_{n=|B|+1}^\infty (n-b)^{\alpha+b-1}
\sum_{\substack{k_{j_1},\ldots,k_{j_s}\ge 1\\k_{j_1}+\cdots+k_{j_s}=n}}
\dfrac{n!}{k_{j_1}\cdots k_{j_s} \cdot k_{j_1}!\cdots k_{j_s}!}
\left(\dfrac{\omega_{j_1}}{a}\right)^{k_{j_1}}
\cdots
\left(\dfrac{\omega_{j_s}}{a}\right)^{k_{j_s}}\\
&\le
\sum_{n=|B|+1}^\infty (n-b)^{\alpha+b-1}
\left(\dfrac{|\bm{\omega}_C|}{a}\right)^n\\
&<\infty.
\end{align*}
This estimate verifies that \eqref{eq:interchange} converges,
and allows differentiation and summation to be performed in either order.
This completes the proof of Theorem \ref{expression by S}.
\end{proof}

To deduce the complete asymptotic expansion for $I_r(x;\bm{\omega},a)$
from Theorem \ref{expression by S},
we have to investigate the power series of $S_r(x,\bm{\omega})$ and its coefficients.

\begin{proposition}
\label{power series of S}
$S_r(x,\bm{\omega})$ can be expressed as a power series in $x$ of the form
\begin{equation}
\label{eq:power series for S}
S_r(x,\bm{\omega})=\sum_{l=1}^\infty T_{r,l}(\bm{\omega}) x^l
\end{equation}
for $r\in\Zb_{\ge 1}$, $x>0$, and $\bm{\omega}\in\Cb^r$ with $|\bm{\omega}|<1$,
where
\[
T_{r,l}(\bm{\omega})
=\sum_{m=l}^\infty
\begin{bmatrix}
m\\
l
\end{bmatrix}
\sum_{\substack{k_1,\ldots,k_r\ge 1 \\ k_1+\cdots+k_r=m}}
\dfrac{\omega_1^{k_1}\cdots \omega_r^{k_r}}{k_1\cdots k_r \cdot k_1!\cdots k_r!}
\]
and
$\displaystyle
\begin{bmatrix}
m\\
l
\end{bmatrix}$
is the Stirling number of the first kind.
\end{proposition}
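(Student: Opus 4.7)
The plan is to substitute the classical expansion
$$(x)_m = \sum_{l=0}^m \begin{bmatrix} m \\ l \end{bmatrix} x^l$$
of the rising factorial in terms of unsigned Stirling numbers of the first kind into the definition \eqref{def of S} of $S_r(x,\bm{\omega})$, and then interchange the order of the summations in $l$ and in $k_1,\ldots,k_r$. Since $\begin{bmatrix}m\\0\end{bmatrix}=0$ for $m\ge 1$, the resulting power series in $x$ automatically starts at $l=1$, and collecting the coefficient of $x^l$ produces precisely $T_{r,l}(\bm{\omega})$.

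First I would plug the Stirling expansion of $(x)_{k_1+\cdots+k_r}$ into the summand of $S_r(x,\bm{\omega})$. Second, to justify the interchange of summations by Fubini/Tonelli, I pass to absolute values: the resulting non-negative double series equals $S_r(|x|,(|\omega_1|,\ldots,|\omega_r|))$, whose convergence is guaranteed by the preceding proposition because $|\bm{\omega}|<1$. Third, to guarantee that the coefficient $T_{r,l}(\bm{\omega})$ is itself a well-defined complex number for each fixed $l$, I would use the elementary bound $\begin{bmatrix}m\\l\end{bmatrix}\le (\alpha)_m/\alpha^l$ (which follows from $(\alpha)_m=\sum_j\begin{bmatrix}m\\j\end{bmatrix}\alpha^j$ with $\alpha>0$) together with the multinomial estimate
$$\sum_{\substack{k_1,\ldots,k_r\ge1\\k_1+\cdots+k_r=m}}\dfrac{|\omega_1|^{k_1}\cdots|\omega_r|^{k_r}}{k_1\cdots k_r\cdot k_1!\cdots k_r!}\le \dfrac{|\bm{\omega}|^m}{m!}.$$
Combining these with the bound $(\alpha)_m/m!\ll m^{\alpha-1}$ already established in \eqref{Pochhammer-2}, I obtain $|T_{r,l}(\bm{\omega})|\ll \alpha^{-l}\sum_{m\ge l}m^{\alpha-1}|\bm{\omega}|^m<\infty$, completing the verification.

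The main obstacle, and essentially the only step requiring care, is the Fubini justification; after that, the derivation is purely formal substitution. Conveniently, the absolute convergence needed for Fubini is exactly what was already established in the preceding proposition, so no new analytical input is needed beyond the Stirling-number identity for the Pochhammer symbol and the elementary bounds above.
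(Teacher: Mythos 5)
Your proposal is correct and follows essentially the same route as the paper: substitute the Stirling-number expansion $(x)_m=\sum_{l}\left[\begin{smallmatrix}m\\ l\end{smallmatrix}\right]x^l$ into the definition of $S_r(x,\bm{\omega})$ and interchange the order of summation. The only difference is that you spell out the Fubini/Tonelli justification (correctly, by comparison with $S_r(x,(|\omega_1|,\ldots,|\omega_r|))$), whereas the paper leaves this implicit after its absolute-convergence proposition.
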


\begin{proof}
Using the relation
\begin{equation}
\label{eq:Pochhammer-Stirling}
(x)_m=\sum_{l=1}^m
\begin{bmatrix}
m\\
l
\end{bmatrix}
x^l
\end{equation}
for $m\ge 1$,
we have
\begin{align*}
S_r(x,\bm{\omega})
&=\sum_{m=r}^\infty
\sum_{\substack{k_1,\ldots,k_r\ge 1\\ k_1+\cdots +k_r=m}}
\left(\sum_{l=1}^m
\begin{bmatrix}
m\\
l
\end{bmatrix}
x^l\right)
\dfrac{\omega_1^{k_1}\cdots \omega_r^{k_r}}
{k_1\cdots k_r\cdot k_1!\cdots k_r!}\\
&=
\sum_{l=1}^\infty x^l
\sum_{m=\max\{r,l\}}^\infty
\begin{bmatrix}
m\\
l
\end{bmatrix}
\sum_{\substack{k_1,\ldots,k_r\ge 1\\ k_1+\cdots+k_r=m}}
\dfrac{\omega_1^{k_1}\cdots \omega_r^{k_r}}
{k_1\cdots k_r\cdot k_1!\cdots k_r!}.
\end{align*}
If $m<r$, then the innermost sum is empty and can be regarded as $0$.
Hence we obtain \eqref{eq:power series for S}
\end{proof}

\begin{proposition}
\label{expression of T}
We have
$T_{1,l}(\omega)=\Li _{\underbrace{\mbox{\scriptsize $1,\ldots,1$}}_{l-1},2} (\omega)$
and 
\begin{equation}
\label{eq:T-MPL}
T_{r,l}(\bm{\omega})
=\int_0^{\omega_1}\cdots\int_0^{\omega_r}
\sum_{J\subset R} (-1)^{r-|J|} \Li_{\underbrace{\mbox{\scriptsize $1,\ldots,1$}}_{l}}\biggl(\sum_{j\in J} t_j\biggr)
\dfrac{dt_1\cdots dt_r}{t_1\cdots t_r}
\end{equation}
for $r\ge 1$, where $\Li_{k_1,k_2,\ldots,k_l}(z)$ is the one-variable multiple polylogarithm defined by \eqref{eq:def of one-variable MPL}.
\end{proposition}

Remark that the integrand of \eqref{eq:T-MPL} can be expressed
in terms of the classical logarithm by using the relation
\[
\Li_{\underbrace{\mbox{\scriptsize $1,\ldots,1$}}_{l}}(z)
=\dfrac{1}{l!} (-\log (1-z))^l
\]
(see \cite[Lemma 1(ii)]{AK99}).

\begin{proof}[Proof of Proposition \ref{expression of T}]
It follows from \eqref{eq:Pochhammer-Stirling} and the first line in \eqref{Pochhammer} that
\begin{equation}
\label{eq:Stirling-Harmonic}
\dfrac{1}{m!}
\begin{bmatrix}
m\\
 l
\end{bmatrix}
=\sum_{1\le m_1<\cdots<m_{l-1}<m} \dfrac{1}{m_1\cdots m_{l-1}m}.
\end{equation}
Hence we obtain
\[
T_{1,l}(\omega)
=\sum_{m=l}^\infty
\begin{bmatrix}
m\\
l
\end{bmatrix}
\dfrac{\omega^m}{m\cdot m!}
=\sum_{m=l}^\infty \sum_{1\le m_1<\cdots<m_{l-1}<m}
\dfrac{\omega^m}{m_1\cdots m_{l-1}m^2}
=\Li _{\underbrace{\mbox{\scriptsize $1,\ldots,1$}}_{l-1},2} (\omega).
\]
In general, using the multinomial theorem we have
\begin{align*}
T_{r,l}(\bm{\omega})
&=
\int_0^{\omega_1}\cdots \int_0^{\omega_r}
\sum_{m=l}^\infty
\begin{bmatrix}
m\\
l
\end{bmatrix}
\sum_{\substack{k_1,\ldots,k_r\ge1\\ k_1+\cdots +k_r=m}}
\dfrac{t_1^{k_1-1}\cdots t_r^{k_r-1}}{k_1!\cdots k_r!} dt_1\cdots dt_r\\
&=
\int_0^{\omega_1}\cdots \int_0^{\omega_r}
\sum_{m=l}^\infty
\dfrac{1}{m!}
\begin{bmatrix}
m\\
l
\end{bmatrix}
\sum_{J\subset R} (-1)^{r-|J|}\biggl(\sum_{j\in J} t_j\biggr)^m
\dfrac{dt_1\cdots dt_r}{t_1\cdots t_r}.
\end{align*}
Using \eqref{eq:Stirling-Harmonic} again,
we get \eqref{eq:T-MPL}.
\end{proof}

We are now ready to present the proof of Theorem \ref{v9 first complete asymptotic expansion of integral}, in which Theorem \ref{expression by S} and Proposition \ref{power series of S} play a central role.
\begin{proof}[Proof of Theorem \ref{v9 first complete asymptotic expansion of integral}]
Let $f(x)=\Gamma(x+1)/e^{-\gamma x}=x\Gamma(x)/e^{-\gamma x}$. Then from Theorem \ref{expression by S}, we have
		\begin{align*}
			a^x I_r(x;\bm{\omega},a)
			&=I_r(x;\bm{\omega}/a,1)\\
			&=(-1)^r \dfrac{e^{-\gamma x}}{\Gamma(x)}
			\sum_{A,B,C}
			\biggl(\prod_{i\in A} \log \dfrac{\omega_i}{a}\biggr)
			\biggl(\dfrac{\partial}{\partial x}\biggr)^{|B|}
			\dfrac{\Gamma(x)}{e^{-\gamma x}}
			S_{|C|}\biggl(x,-\dfrac{\bm{\omega}_C}a\biggr)\\
			&=(-1)^r \dfrac{x}{f(x)} \sum_{\substack{A,B,C \\ C=\emptyset}}
			\biggl(\prod_{i\in A} \log \dfrac{\omega_i}{a}\biggr)
			\biggl(\dfrac{d}{d x}\biggr)^{|B|}
			\dfrac{f(x)}{x}\\
			&\quad
			+(-1)^r \dfrac{x}{f(x)} \sum_{\substack{A,B,C \\ C\ne\emptyset}}
			\biggl(\prod_{i\in A} \log \dfrac{\omega_i}{a}\biggr)
			\biggl(\dfrac{\partial}{\partial x}\biggr)^{|B|}
			\dfrac{f(x)}{x} S_{|C|}\biggl(x,-\dfrac{\bm{\omega}_C}a\biggr)\\
			&=:S_1+S_2\,,
		\end{align*}
		where $S_1$ and $S_2$ denote the sums over $C=\emptyset$ and $C\ne\emptyset$, respectively.

		We first consider $S_1$. By using the result \eqref{v9 a(k,m) sequence},  we compute
		\begin{align*}
			\dfrac{x}{f(x)}
			\biggl(\dfrac{d}{d x}\biggr)^{|B|} \dfrac{f(x)}{x}
			&=
			\dfrac{x}{f(x)} \sum_{k=0}^{|B|} \binom{|B|}{k}
			\biggl(\dfrac{d}{d x}\biggr)^{k} f(x) \cdot
			\biggl(\dfrac{d}{dx}\biggr)^{|B|-k} \dfrac1x\\
			&=
			x \sum_{k=0}^{|B|} \binom{|B|}{k}
			\biggl(\sum_{l=0}^\infty a_{k,l}x^l\biggr)
			\cdot (-1)^{|B|-k} \dfrac{(|B|-k)!}{x^{|B|-k+1}}\\
			&=
			|B|! \sum_{k=0}^{|B|} \sum_{l=0}^\infty
			(-1)^{|B|-k}  \frac{a_{k,l}}{k!} x^{l-|B|+k}\\
			&=\sum_{n=-|B|}^\infty
			\biggl(|B|!
			\sum_{k=0}^{\min\{|B|,|B|+n\}}
			(-1)^{|B|-k} \dfrac{a_{k,|B|+n-k}}{k!}\biggr)
			x^n.
		\end{align*}
		Hence we get (with putting $|A|=j$ and $|B|=r-j$)
		\begin{align*}
			S_1&=(-1)^r \sum_{j=0}^r \Lambda_j(\bm{\omega}/a)
			\sum_{n=-r+j}^\infty
			\biggl(
			(r-j)! \sum_{k=0}^{\min\{r-j,r-j+n\}}
			(-1)^{r-j-k} \dfrac{a_{k,r-j+n-k}}{k!}
			\biggr)x^n\\
			&=(-1)^r \sum_{j=0}^r \Lambda_j(\bm{\omega}/a)
			\sum_{m=j}^\infty
			\biggl(
			(r-j)! \sum_{k=0}^{\min\{r-j,m-j\}}
			(-1)^{r-j-k} \dfrac{a_{k,m-j-k}}{k!}
			\biggr)x^{m-r}\\
			&=
			\sum_{m=0}^\infty \biggl(
			\sum_{j=0}^{\min\{r,m\}}
			(r-j)!
			\Lambda_j(\bm{\omega}/a)
			\sum_{k=0}^{\min\{r-j,m-j\}}
			(-1)^{j+k} \dfrac{a_{k,m-j-k}}{k!}
			\biggr)x^{m-r}.
		\end{align*}

		We next turn to $S_2$.
		We know from Proposition \ref{power series of S} that
		\[
		\dfrac{1}{x}S_{|C|}\biggl(x,-\dfrac{\bm{\omega}_C}{a}\biggr)
		=\sum_{l=1}^\infty T_{|C|,l}\biggl(-\dfrac{\bm{\omega}_C}{a}\biggr) x^{l-1}
		\]
		for  $C\ne\emptyset$, and then it implies
		\begin{align*}
			\biggl(\dfrac{\partial}{\partial x}\biggr)^{k}
			\dfrac{1}{x}S_{|C|}\biggl(x,-\dfrac{\bm{\omega}_C}{a}\biggr)
			&=\sum_{l=1}^\infty T_{|C|,l}\biggl(-\dfrac{\bm{\omega}_C}{a}\biggr)
			(l-1)(l-2)\cdots (l-k) x^{l-k-1}\\
			&=\sum_{n=0}^\infty T_{|C|,n+k+1}\biggl(-\dfrac{\bm{\omega}_C}{a}\biggr)
			(n+k)(n+k-1)\cdots (n+1) x^n\\
			&=\sum_{n=0}^\infty (n+1)_k T_{|C|,n+k+1}\biggl(-\dfrac{\bm{\omega}_C}{a}\biggr)
			x^n.
		\end{align*}
		Now again using the result \eqref{v9 a(k,m) sequence}, we calculate 
		\begin{align*}
			\lefteqn{\dfrac{x}{f(x)}
				\biggl(\dfrac{\partial}{\partial x}\biggr)^{|B|}
				\dfrac{f(x)}{x} S_{|C|}\biggl(x,-\dfrac{\bm{\omega}_C}a\biggr)}
			\quad & \\
			&= \dfrac{x}{f(x)} \sum_{k=0}^{|B|} \binom{|B|}{k}
			\biggl(\dfrac{d}{dx}\biggr)^{|B|-k} f(x) \cdot
			\biggl(\dfrac{d}{dx}\biggr)^{k}
			\dfrac{1}{x}S_{|C|}\biggl(x,-\dfrac{\bm{\omega}_C}{a}\biggr)\\
			&=x \sum_{k=0}^{|B|} \binom{|B|}{k}
			\biggl(\sum_{l=0}^\infty a_{|B|-k,l}x^l\biggr)
			\biggl(\sum_{n=0}^\infty (n+1)_{k} T_{|C|,n+k+1}\biggl(-\dfrac{\bm{\omega}_C}{a}\biggr)
			x^n \biggr)\\
			&=\sum_{m'=1}^\infty
			\biggl(\sum_{k=0}^{|B|} \binom{|B|}{k}
			\sum_{n=0}^{m'-1} a_{|B|-k,m'-n-1} (n+1)_{k} T_{|C|,n+k+1}\biggl(-\dfrac{\bm{\omega}_C}{a}\biggr)\biggr)
			x^{m'}\\
			&=\sum_{m=r+1}^\infty
			\biggl(\sum_{k=0}^{|B|} \binom{|B|}{k}
			\sum_{n=0}^{m-r-1} a_{|B|-k,m-r-n-1} (n+1)_{k} T_{|C|,n+k+1}\biggl(-\dfrac{\bm{\omega}_C}{a}\biggr)\biggr)
			x^{m-r}.
\end{align*}
Reindexing the innermost sum by setting $j=n+k+1$, we get
\begin{align*}
			\lefteqn{\dfrac{x}{f(x)}
				\biggl(\dfrac{\partial}{\partial x}\biggr)^{|B|}
				\dfrac{f(x)}{x} S_{|C|}\biggl(x,-\dfrac{\bm{\omega}_C}a\biggr)}
			& \\
			&=\sum_{m=r+1}^\infty
			\biggl(\sum_{k=0}^{|B|} \binom{|B|}{k}
			\sum_{j=k+1}^{m-r+k} a_{|B|-k,m-r-j+k} (j-k)_{k} T_{|C|,j}\biggl(-\dfrac{\bm{\omega}_C}{a}\biggr)
			\biggr)
			x^{m-r}\\
			&=\sum_{m=r+1}^\infty
			\biggl(\sum_{j=1}^{m-r+|B|}T_{|C|,j}\biggl(-\dfrac{\bm{\omega}_C}{a} \biggr)
			\sum_{k=\max\{0,j-m+r\}}^{\min\{|B|,j-1\}} \binom{|B|}{k}
			a_{|B|-k,m-r-j+k} (j-k)_{k}
			\biggr)
			x^{m-r}.
		\end{align*}
		Thus we obtain
		\begin{align*}
			S_2&=
			\sum_{m=r+1}^\infty \biggl(
			(-1)^r \sum_{\substack{A,B,C \\ C\ne\emptyset}}
			\biggl(\prod_{i\in A} \log \dfrac{\omega_i}{a}\biggr)
			\sum_{j=1}^{m-r+|B|}T_{|C|,j}\biggl(-\dfrac{\bm{\omega}_C}{a} \biggr)\\
			&\quad \times
			\sum_{k=\max\{0,j-m+r\}}^{\min\{|B|,j-1\}} \binom{|B|}{k}
			a_{|B|-k,m-r-j+k} (j-k)_{k}
			\biggr)
			x^{m-r}.
		\end{align*}
		Therefore the desired expressions for $d_{r,m}(\bm{\omega},a)$ follow from those for $S_1$ and $S_2$.
	\end{proof}
	Next, we analyze the numbers  $ \left(a_{k,m}\right)_{k,m\geq 0} $ in detail using the Fa\`a di Bruno formula.
We first define the $k$-th complete exponential Bell polynomial
$B_k(x_1,\ldots,x_k)$ by the recurrent relation
\begin{align}
	\label{recurrence relation for Bell polynomials}
	B_{k+1}(x_1,\ldots,x_{k+1})
=\sum_{s=0}^k \binom{k}{s} x_{s+1}B_{k-s}(x_1,\ldots,x_{k-s})\,\,\,\,(k=0,1,2,\ldots)
\end{align}
with $ B_0=1 $. For instance,
\[
B_1(x_1)=x_1,
\quad
B_2(x_1,x_2)=x_1^2+x_2,
\quad
B_3(x_1,x_2,x_3)=x_1^3+3x_1x_2+x_3.
\]
In \cite[\S4]{Bell34}, Bell gave its generating function
\begin{equation}
\label{eq:generating function}
\exp\biggl(\sum_{m=1}^\infty x_m \dfrac{t^m}{m!}\biggr)
=\sum_{k=0}^\infty B_k(x_1,\ldots,x_k)\dfrac{t^k}{k!}
\end{equation}
and its explicit expression
\[
B_k(x_1,\ldots,x_n)
=k! \sum_{\substack{s_1,s_2,\ldots,s_k\ge 0 \\ s_1+2s_2+\cdots +ks_k=k}}
\dfrac{x_1^{s_1} x_2^{s_2}\cdots x_k^{s_k}}{(1!)^{s_1}(2!)^{s_2}\cdots (k!)^{s_k} \cdot s_1! s_2! \cdots s_k!}
\]
for $k\ge 1$.

One special case of Fa\`a di Bruno formula for higher derivatives is the following:
	\begin{proposition}
		\label{v9 Faa di Bruno formula}
		For any function $h$ belonging to $C^{\infty}$-class,  
		\begin{align*}
			\left(\dfrac{d}{d x}\right)^{k}e^{h(x)}=e^{h(x)}B_k\left(h^{\prime}(x),h^{\prime \prime}(x), \ldots,h^{(k)}(x) \right).
		\end{align*}
	\end{proposition}
	It is known that, if $0<x<1$, then
	\begin{equation}
\label{eq:power series of log gamma}
		\dfrac{e^{-\gamma x}}{\Gamma(x+1)}=\exp\left(\sum_{m=1}^\infty\frac{(-1)^{m-1}\zeta(m)}{m}x^m\right).
	\end{equation}
	Note that throughout we interpret $ \zeta(1) $  as $ 0 $.
	Putting
	\begin{align*}
			h(x)=\sum_{m\geq 1}\frac{(-1)^{m}\zeta(m)}{m}x^m
		\end{align*}
    we have
    \begin{align*}
    \frac{\Gamma(x+1)}{e^{-\gamma x}}=e^{h(x)}.
    \end{align*}
	Hence from \eqref{v9 a(k,m) sequence} and Proposition \ref{v9 Faa di Bruno formula}, we have the following result:
	\begin{corollary}
		\label{v9 Bell polynomial and Coefficients}
		\begin{align*}
			\dfrac{e^{-\gamma x}}{\Gamma(x+1)} \biggl(\dfrac{d}{d x} \biggr)^{k}
			\dfrac{\Gamma(x+1)}{e^{-\gamma x}}=B_k\left(h^{\prime}(x),h^{\prime \prime}(x), \ldots,h^{(k)}(x) \right)
			=\sum_{m=0}^\infty a_{k,m} x^m.
		\end{align*}
	\end{corollary}
	\begin{example}
		\label{v9 example for a(k,m)}
	$ (1) $ 	For $ k=0 $, $ \displaystyle B_0=1=\sum_{m=0}^\infty a_{0,m} x^m $.
	It is clear that $a_{0,0}=1$ and $a_{0,m}=0\,\,(m\ge 1)$.\\[3pt]
	$ (2) $ For $ k=1 $,
	\begin{align*}
		B_1(h^\prime(x))=h^\prime(x)=\sum_{m=0}^\infty a_{1,m} x^m,\,\,\text{where}\,\,	h^\prime(x)=\sum_{m\geq 1}(-1)^{m+1}\zeta(m+1)x^m.
	\end{align*}
	Thus we obtain $ a_{1,0}=0\,,\,a_{1,m}=(-1)^{m+1} \zeta(m+1)\,\,(m=1,2,\ldots).$\\
	$ (3) $ For $ k=2$, $ B_2(h^{\prime}(x),h^{\prime \prime}(x) )=\left(h^\prime(x)\right)^2+h^{\prime \prime}(x)=\displaystyle \sum_{m=0}^\infty a_{2,m} x^m,$
	and 
	\begin{align*}
		&\left(h^\prime(x)\right)^2+h^{\prime \prime}(x)\\&\hspace{4mm}=\left(\sum_{m\geq 1}(-1)^{m+1}\zeta(m+1)x^m\right)^2+\sum_{m\geq 0}(-1)^{m+2}\zeta(m+2)(m+1)x^m\\
		&\hspace{4mm}=\sum_{m\geq 2}(-1)^m\left(\sum_{n_1+n_2=m}\zeta(n_1+1)\zeta(n_2+1)\right)x^m+\sum_{m\geq 0}(-1)^{m}\zeta(m+2)(m+1)x^m\\
		&\hspace{4mm}=\zeta(2)+\sum_{m\geq 1}(-1)^m\left((m+1)\zeta(m+2)+\sum_{n_1+n_2=m}\zeta(n_1+1)\zeta(n_2+1)\right)x^m.
	\end{align*}
	Hence we have
	\begin{align*}
		a_{2,0}=\zeta(2),\,\,a_{2,m}=(-1)^m\left((m+1)\zeta(m+2)+\sum_{n_1+n_2=m}\zeta(n_1+1)\zeta(n_2+1)\right)
	\end{align*}
for $m\ge 1$.
\end{example}
	\par
	These numbers $ \left(a_{k,m}\right)_{k,m\geq 0} $ adhere to an elegant recurrence relation, which is the following.
	\begin{theorem} 
		\label{v9 recrrence relation for a(k,m)}
		For any non-negative integers $ k,m $, we have
		\begin{align*}
			a_{k+1,m}=\sum_{r=0}^{k}{k \choose r}(-1)^{r+1}\sum_{\substack{n_1+n_2=m\\ n_1,n_2\geq 0}}(-1)^{n_1}(n_1+1)_r \zeta(n_1+r+1)a_{k-r,n_2},
		\end{align*}
		where $ (n_1+1)_r $ is the Pochhammer symbol, and the numbers $ \left(a_{0,m}\right)_{m\geq 0} $ are given in Example \ref{v9 example for a(k,m)} $ (1) $.
	\end{theorem}
	\begin{proof}
		From the  recurrence relation of Bell polynomial (see Formula \eqref{recurrence relation for Bell polynomials}), we have
		\begin{align*}
		B_{k+1}\left(h^{\prime}(x), \ldots,h^{(k+1)}(x) \right)=\sum_{r=0}^{k}{k \choose r}h^{(r+1)}(x)B_{k-r}\left(h^{\prime}(x), \ldots,h^{(k-r)}(x) \right).
		\end{align*} 
		Then from Corollary \ref{v9 Bell polynomial and Coefficients}, it follows that
		\begin{align*}
			\hspace{2mm}\sum_{m=0}^\infty & a_{k+1,m} x^m\\
			&=\sum_{r=0}^{k}{k \choose r}\left(\frac{d}{dx}\right)^{r+1}\left(\sum_{m\geq 1}\frac{(-1)^{m}\zeta(m)}{m}x^m\right)\left(\sum_{m=0}^\infty a_{k-r,m} x^m\right)\\
			&=\sum_{r=0}^{k}{k \choose r}\left(\sum_{m\geq r+1}\frac{(-1)^{m}\zeta(m)}{m}m(m-1)\cdots (m-r)x^{m-r-1}\right)\left(\sum_{m=0}^\infty a_{k-r,m} x^m\right)\\
			&=\sum_{r=0}^{k}{k \choose r}\left(\sum_{m\geq 0}\frac{(-1)^{m+r+1}\zeta(m+r+1)}{m+r+1}(m+1)_{r+1}x^m\right)\left(\sum_{m=0}^\infty a_{k-r,m} x^m\right)\\
			&=\sum_{r=0}^{k}{k \choose r}(-1)^{r+1}\left(\sum_{m\geq 0}(-1)^{m}(m+1)_{r}\zeta(m+r+1)x^m\right)\left(\sum_{m=0}^\infty a_{k-r,m} x^m\right).
		\end{align*}
		Now applying the rule for the product of two power series, and then interchanging the summation, we have
		\begin{align*}
			\sum_{m=0}^\infty & a_{k+1,m} x^m\\
			&=\sum_{m=0}^{\infty}\left(\sum_{r=0}^{k}{k \choose r}(-1)^{r+1}\sum_{\substack{n_1+n_2=m\\ n_1,n_2\geq 0}}(-1)^{n_1}(n_1+1)_r \zeta(n_1+r+1)a_{k-r,n_2}\right)x^m.
		\end{align*}
		Then the theorem is followed by comparing the coefficients of the power series.
	\end{proof}
\par
Now using the values of $ a_{k,m} $ (from Example \ref{v9 example for a(k,m)})
and  $ T_{r,l}(\bm{\omega}) $ (from Proposition \ref{expression of T}) in Theorem \ref{v9 first complete asymptotic expansion of integral}, we compute the coefficients $ d_{r,m}(\bm{\omega},a) $ for specific cases.
\begin{example}
	\label{v9 example for coefficients d}
	$(1)$ Let $d_{1,m}:=d_{1,m}(\omega,a)$. Then
	\[
	d_{1,0}=1,
	\quad
	d_{1,1}=-\log \dfrac{\omega}{a},
	\quad
	d_{1,m}=(-1)^{m+1}\zeta(m)
	-\Li _{\underbrace{\mbox{\scriptsize $1,\ldots,1$}}_{m-2},2} \Bigl(-\dfrac{\omega}{a}\Bigr)
	\quad (m\ge 2).
	\]
	As a result, we obtain \eqref{eq:I1-2}.\\
	$(2)$ Let $d_{2,m}:=d_{2,m}((\omega_1,\omega_2),a)$.
	We have
	\begin{align*}
		d_{2,0}&=2,
		\quad
		d_{2,1}=-\log \dfrac{\omega_1\omega_2}{a^2},
		\quad
		d_{2,2}=-\dfrac{\pi^2}6+\log \dfrac{\omega_1}{a} \log \dfrac{\omega_2}{a} 
	\end{align*}	
		because $a_{1,1}= a_{2,0}=\dfrac{\pi^2}{6}$, and
	\begin{align*}
		d_{2,m}&=(-1)^{m+1}2\zeta(m)+a_{2,m-2}
		+(-1)^{m-1}\zeta(m-1) \log \dfrac{\omega_1\omega_2}{a^2}\\
		&\quad 
		+T_{2,m-2}\biggl(\!\!\Bigl(-\dfrac{\omega_1}{a},-\dfrac{\omega_2}{a}\Bigr)\!\!\biggr)
		+\log \dfrac{\omega_1}{a}
		\Li _{\underbrace{\mbox{\scriptsize $1,\ldots,1$}}_{m-3},2} \Bigl(-\dfrac{\omega_2}{a}\Bigr)
		+\log \dfrac{\omega_2}{a}
		\Li _{\underbrace{\mbox{\scriptsize $1,\ldots,1$}}_{m-3},2} \Bigl(-\dfrac{\omega_1}{a}\Bigr)\\
		&\quad
		+(-1)^{m-2}\zeta(m-2)
		\biggl(
		\Li_2\Bigl(-\dfrac{\omega_1}{a}\Bigr)+\Li_2\Bigl(-\dfrac{\omega_2}{a}\Bigr)
		\biggr)\\
		&\quad
		+(m-2)
		\biggl(
		\Li _{\underbrace{\mbox{\scriptsize $1,\ldots,1$}}_{m-2},2} \Bigl(-\dfrac{\omega_1}{a}\Bigr)
		+
		\Li _{\underbrace{\mbox{\scriptsize $1,\ldots,1$}}_{m-2},2} \Bigl(-\dfrac{\omega_2}{a}\Bigr)
		\biggr)\\
		&\quad
		+\sum_{j=2}^{m-3}
		(-1)^{m-j-1}\zeta(m-j-1)
		\biggl(
		\Li _{\underbrace{\mbox{\scriptsize $1,\ldots,1$}}_{j-1},2} \Bigl(-\dfrac{\omega_1}{a}\Bigr)
		+
		\Li _{\underbrace{\mbox{\scriptsize $1,\ldots,1$}}_{j-1},2} \Bigl(-\dfrac{\omega_1}{a}\Bigr)
		\biggr)
	\end{align*}
	for $m\ge 3$, and where $ a_{2,m-2} $ is calculated in Example \ref{v9 example for a(k,m)} $ (3) $.
\end{example}

\section{Complete asymptotic expansion via an alternative method}
\label{sec:power series for I}
In this section we describe our second method, which leads to Theorem 
\ref{power series for I}.

In the proof of Theorem \ref{power series for I},
it is crucial to express the integral $I_r(x;\bm{\omega},a)$ as a finite sum of multiple polylogarithms of Hurwitz type,
as detailed in Propositions \ref{relation between I and Li0} and \ref{I-Li1} below.
To establish these results,
we first show the following related lemmas.

\begin{lemma}
For $r\in\Zb_{\ge 1}$, we have
\begin{equation}
\label{eq:relation 1}
I_r(x;\bm{\omega},a)
=\dfrac1x \sum_{i=1}^r I_{r-1}(x;(\omega_1,\ldots,\hat{\omega_i},\ldots,\omega_r),a+\omega_i)
+aI_r(x+1;\bm{\omega},a),
\end{equation}
where the hat notation means that the element with hat is omitted
and $I_0(x;\emptyset,a)=a^{-x}$.
\end{lemma}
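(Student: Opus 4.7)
The plan is to obtain \eqref{eq:relation 1} directly from the definition \eqref{def of I} by combining one algebraic identity with one elementary inner integration. The starting observation is the pointwise splitting
\[
\frac{1}{(\omega_1 t_1 + \cdots + \omega_r t_r + a)^{x}}
= \frac{a}{(\omega_1 t_1 + \cdots + \omega_r t_r + a)^{x+1}}
+ \sum_{i=1}^{r}\frac{\omega_i t_i}{(\omega_1 t_1 + \cdots + \omega_r t_r + a)^{x+1}},
\]
which follows from writing $\omega_1 t_1 + \cdots + \omega_r t_r = (\omega_1 t_1 + \cdots + \omega_r t_r + a) - a$ and dividing both sides by $(\omega_1 t_1 + \cdots + \omega_r t_r + a)^{x+1}$. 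This identity already anticipates the structure of \eqref{eq:relation 1}: the $a$-term will produce $aI_r(x+1;\bm{\omega},a)$, and the $r$ terms under the summation sign, one for each index $i$, will produce the $r$ terms in the sum on the right of \eqref{eq:relation 1}.

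Next I would divide through by $t_1 \cdots t_r$ and integrate over $[1,\infty)^{r}$. The left-hand side reproduces $I_r(x;\bm{\omega},a)$, and the first term on the right immediately gives $aI_r(x+1;\bm{\omega},a)$. For the $i$-th summand the factor $\omega_i t_i$ cancels the $t_i$ in the denominator, and Fubini allows us to perform the inner integration with respect to $t_i$ first. Using the antiderivative $-\tfrac{1}{x}(\omega_1 t_1+\cdots+\omega_r t_r+a)^{-x}$, this inner integral evaluates to
\[
\int_1^\infty \frac{\omega_i\, dt_i}{(\omega_1 t_1 + \cdots + \omega_r t_r + a)^{x+1}}
= \frac{1}{x\bigl(\sum_{j\neq i} \omega_j t_j + \omega_i + a\bigr)^{x}}.
\]
Integrating the remaining $r-1$ variables against $\prod_{j\neq i} t_j^{-1}$ and comparing with \eqref{def of I} identifies the result as $\tfrac{1}{x} I_{r-1}\bigl(x;(\omega_1,\ldots,\hat{\omega_i},\ldots,\omega_r), a+\omega_i\bigr)$, so summing over $i$ produces the remaining term of \eqref{eq:relation 1}.

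I expect no serious obstacle: after the split, every integrand is non-negative on $[1,\infty)^{r}$, and the absolute convergence for $x>0$ guaranteed by Proposition \ref{convergence for I} (applied to each of the $r+1$ pieces, each of which again has the shape of an integral analogue of $I_\bullet$ at exponent $x$ or $x+1$) fully justifies the use of Fubini and the exchange of the finite sum over $i$ with the integral. The only point worth remarking is the degenerate case $r=1$, where the product over $j\neq i$ is empty; here the convention $I_0(x;\emptyset,a)=a^{-x}$ makes the boundary term $\tfrac{1}{x(a+\omega)^{x}}$ emerge automatically, and one checks directly that \eqref{eq:relation 1} reduces to the correct identity $I_1(x;\omega,a)=\tfrac{1}{x(a+\omega)^{x}}+aI_1(x+1;\omega,a)$.
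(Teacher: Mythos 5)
Your proposal is correct and follows essentially the same route as the paper: the paper likewise splits $\omega_1t_1+\cdots+\omega_rt_r+a$ in the numerator after raising the exponent to $x+1$, cancels $\omega_i t_i$ against the $t_i$ in the denominator, and evaluates the inner $t_i$-integral via the same antiderivative to produce $\tfrac1x I_{r-1}(x;\bm{\omega}_{R\setminus\{i\}},a+\omega_i)$. The only difference is cosmetic (you also spell out the Tonelli justification and the $r=1$ boundary case, which the paper leaves implicit).
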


\begin{proof}
From the definition of $I_r(x;\bm{\omega},a)$, we have
\begin{align*}
I_r(x;\bm{\omega},a)
&=\int_1^\infty \cdots \int_1^\infty
\dfrac{\omega_1t_1+\cdots +\omega_rt_r+a}{t_1\cdots t_r(\omega_1t_1+\cdots +\omega_rt_r+a)^{x+1}} \,
dt_1\cdots dt_r\\
&=
\sum_{i=1}^r
\int_1^\infty \cdots \int_1^\infty
\dfrac{\omega_i}{t_1\cdots\hat{t_i}\cdots t_r(\omega_1t_1+\cdots +\omega_rt_r+a)^{x+1}} \, 
dt_1 \cdots dt_r\\
&\quad
+a I_r(x+1;\bm{\omega},a).
\end{align*}
Since
\[
\int_1^\infty \dfrac{\omega_i dt_i}{(\omega_1t_1+\cdots+\omega_rt_r+a)^{x+1}}
=\dfrac{1}{x} \cdot \dfrac{1}{(\omega_1t_1+\cdots +\widehat{\omega_it_i}+\cdots+\omega_rt_r+a+\omega_i)^x},
\]
we obtain \eqref{eq:relation 1}.
\end{proof}

\begin{lemma}
For $r\in\Zb_{\ge 0}$, we have
\begin{equation}
\label{eq:relation 2}
I_r(x;\bm{\omega},a)
=\sum_{j=0}^r \dfrac{j!}{x^j}
\sum_{\substack{J\subset R \\ |J|=j}}
(a+|\bm{\omega}_J|) I_{r-j} (x+1;\bm{\omega}_{R\setminus J},a+|\bm{\omega}_J|).
\end{equation}
\end{lemma}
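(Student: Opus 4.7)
The plan is to prove the identity by induction on $r$, using the previous recursion \eqref{eq:relation 1} together with a change of summation index that folds the ``distinguished'' element $i$ into the subset $J$.

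First I would verify the base case $r=0$. Here $R=\emptyset$ and the only subset is $J=\emptyset$, so the right-hand side reduces to $(a+0)\cdot I_0(x+1;\emptyset,a)=a\cdot a^{-(x+1)}=a^{-x}$, which matches $I_0(x;\emptyset,a)$.

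For the inductive step, I would apply \eqref{eq:relation 1} to rewrite
\[
I_r(x;\bm{\omega},a)=\frac{1}{x}\sum_{i=1}^r I_{r-1}\bigl(x;\bm{\omega}_{R\setminus\{i\}},a+\omega_i\bigr)+aI_r(x+1;\bm{\omega},a),
\]
and then invoke the induction hypothesis on each summand $I_{r-1}(x;\bm{\omega}_{R\setminus\{i\}},a+\omega_i)$. This produces an inner sum over subsets $J'\subset R\setminus\{i\}$ with $|J'|=j$, weighted by $(a+\omega_i+|\bm{\omega}_{J'}|)$ and containing $I_{r-1-j}(x+1;\bm{\omega}_{(R\setminus\{i\})\setminus J'},a+\omega_i+|\bm{\omega}_{J'}|)$.

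The key combinatorial step is then to set $J=J'\sqcup\{i\}$, so that $|J|=j+1$, $a+\omega_i+|\bm{\omega}_{J'}|=a+|\bm{\omega}_J|$, and $(R\setminus\{i\})\setminus J'=R\setminus J$. Under this substitution the summand no longer depends on which element $i\in J$ was distinguished, so
\[
\sum_{i=1}^r\sum_{\substack{J'\subset R\setminus\{i\}\\ |J'|=j}}(\cdots)=\sum_{\substack{J\subset R\\ |J|=j+1}}(j+1)\,(\cdots),
\]
and the factor $(j+1)/x$ combines with $j!/x^j$ to give $(j+1)!/x^{j+1}$. Reindexing with $k=j+1$ accounts for the terms $1\le k\le r$; the isolated term $aI_r(x+1;\bm{\omega},a)$ supplies the missing $k=0$ contribution (with $J=\emptyset$), completing the identity.

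I expect the only subtle point to be the combinatorial bookkeeping in the reindexing, in particular verifying that the multiplicity $j+1$ arising from the choice of $i\in J$ correctly upgrades $j!/x^{j+1}$ to $(j+1)!/x^{j+1}$; everything else is a direct consequence of \eqref{eq:relation 1} and the induction hypothesis.
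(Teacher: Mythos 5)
Your proof is correct. The base case checks out, the application of \eqref{eq:relation 1} followed by the induction hypothesis is legitimate (the hypothesis applies to the $(r-1)$-tuple $\bm{\omega}_{R\setminus\{i\}}$ with shifted parameter $a+\omega_i>0$), and the reindexing $J=J'\sqcup\{i\}$ with multiplicity $j+1$ correctly turns $j!/x^{j+1}$ into $(j+1)!/x^{j+1}$, with $aI_r(x+1;\bm{\omega},a)$ supplying the $j=0$ term. The route differs from the paper's in how the induction is organized: the paper fixes $r$ and proves, by induction on an auxiliary depth $k=1,\ldots,r$, the intermediate identity \eqref{eq:ind hyp 1}, which expresses $I_r(x;\bm{\omega},a)$ as a single ``unresolved'' block $\frac{k!}{x^k}\sum_{|J|=k}I_{r-k}(x;\bm{\omega}_{R\setminus J},a+|\bm{\omega}_J|)$ plus the accumulated $x+1$ terms for $j<k$, and then sets $k=r$; you instead induct on $r$ itself, invoking the full statement of the lemma at level $r-1$. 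Both arguments are driven by \eqref{eq:relation 1} and by exactly the same multiplicity count (each $(j+1)$-element set $J$ arises from $j+1$ choices of the distinguished index $i\in J$), so they are close in substance. Your version is somewhat shorter and avoids introducing the intermediate mixed identity; the paper's version has the side benefit that its unfolding-by-depth structure directly prefigures the iterated expansion used later in Lemma \ref{I-N}.
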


\begin{proof}
Since \eqref{eq:relation 2} obviously holds for $r=0$,
we assume $r\ge 1$ below.
We will prove, by induction on $k$, that
the relation
\begin{align}
\label{eq:ind hyp 1}
I_r(x;\bm{\omega},a)
&=\dfrac{k!}{x^k}\sum_{\substack{J\subset R\\ |J|=k}}
I_{r-k}(x;\bm{\omega}_{R\setminus J},a+|\bm{\omega}_J|)\\
&\quad
+\sum_{j=0}^{k-1} \dfrac{j!}{x^j}
\sum_{\substack{J\subset R\\ |J|=j}} (a+|\bm{\omega}_J|)
I_{r-j}(x+1;\bm{\omega}_{R\setminus J}, a+|\bm{\omega}_J|) \notag
\end{align}
holds for $k=1,2,\ldots,r$.
First, (\ref{eq:ind hyp 1}) holds for $k=1$, because it coincides with (\ref{eq:relation 1}).
Next we assume that (\ref{eq:ind hyp 1}) is true for some fixed $k\, \le r-1$.
By applying (\ref{eq:relation 1}) to (\ref{eq:ind hyp 1}),
we have
\begin{align*}
I_r(x;\bm{\omega},a)
&=
\dfrac{k!}{x^k}
\sum_{\substack{J\subset R\\ |J|=k}}
\biggl\{
\dfrac{1}{x}\sum_{i\in R\setminus J}
I_{r-k-1}(x;\bm{\omega}_{R\setminus (J\sqcup \{i\})},a+|\bm{\omega}_{J\sqcup\{i\}}|)\\
&\hskip20mm
+(a+|\bm{\omega}_J|)I_{r-k}(x+1;\bm{\omega}_{R\setminus J},a+|\bm{\omega}_J|)
\biggr\}\\
&\quad
+\sum_{j=0}^{k-1} \dfrac{j!}{x^j}
\sum_{\substack{J\subset R\\ |J|=j}} (a+|\bm{\omega}_J|)
I_{r-j}(x+1;\bm{\omega}_{R\setminus J}, a+|\bm{\omega}_J|)\\
&=
\dfrac{k!}{x^{k+1}}
\sum_{\substack{J\subset R\\ |J|=k}}
\sum_{i\in R\setminus J}
I_{r-k-1}(x;\bm{\omega}_{R\setminus (J\sqcup \{i\})},a+|\bm{\omega}_{J\sqcup\{i\}}|)\\
&\quad
+\sum_{j=0}^{k} \dfrac{j!}{x^j}
\sum_{\substack{J\subset R\\ |J|=j}} (a+|\bm{\omega}_J|)
I_{r-j}(x+1;\bm{\omega}_{R\setminus J}, a+|\bm{\omega}_J|).
\end{align*}
The first term on the right-hand side is
\begin{align*}
&\dfrac{k!}{x^{k+1}}
\sum_{\substack{J'\subset R\\ |J'|=k+1}}
(k+1)
I_{r-k-1}(x;\bm{\omega}_{R\setminus J'},a+|\bm{\omega}_{J'}|)\\
&=\dfrac{(k+1)!}{x^{k+1}}
\sum_{\substack{J'\subset R\\ |J'|=k+1}}
I_{r-k-1}(x;\bm{\omega}_{R\setminus J'},a+|\bm{\omega}_{J'}|).
\end{align*}
Hence \eqref{eq:ind hyp 1} holds for $k+1$,
and consequently \eqref{eq:ind hyp 1} holds for $k=1,2,\ldots,r$.

By substituting $k=r$ into (\ref{eq:ind hyp 1}),
we obtain
\begin{align*}
I_r(x;\bm{\omega},a)
&=\dfrac{r!}{x^r} I_0(x;\omega_\emptyset, a+|\bm{\omega}|)
+\sum_{j=0}^{r-1} \dfrac{j!}{x^j}
\sum_{\substack{J\subset R\\ |J|=j}}
(a+|\bm{\omega}_J|) I_{r-j}(x+1;\bm{\omega}_{R\setminus J},a+|\bm{\omega}_J|).
\end{align*}
The first term on the right-hand side can be rewritten as
\[
\dfrac{r!}{x^r}I_0(x;\emptyset,a+|\bm{\omega}|)
=\dfrac{r!}{x^r}(a+|\bm{\omega}|)^{-x}
=\dfrac{r!}{x^r}(a+|\bm{\omega}|)I_0(x+1;\emptyset,a+|\bm{\omega}|).
\]
Thus \eqref{eq:relation 2} follows.
\end{proof}

\begin{lemma}
\label{I-N}
For positive integers $r$ and $N$, we have
\begin{align}
\label{eq:relation 3}
I_r(x;\bm{\omega},a)
&=\sum_{\substack{j_1,\ldots,j_N\ge 0 \\ j_1+\cdots+j_N\le r}}
\dfrac{j_1!\cdots j_N!}{x^{j_1}(x+1)^{j_2}\cdots (x+N-1)^{j_N}}\\
&\quad
\times \sum_{J_1,\ldots,J_N} (a+|\bm{\omega}_{J_1}|) (a+|\bm{\omega}_{J_1\sqcup J_2}|)
\cdots (a+|\bm{\omega}_{J_1\sqcup \cdots \sqcup J_N}|) \notag\\
&\quad 
\times
I_{r-j_1-\cdots -j_N}(x+N; \bm{\omega}_{R\setminus (J_1\sqcup \cdots \sqcup J_N)},
a+|\bm{\omega}_{J_1\sqcup \cdots \sqcup J_N}|), \notag
\end{align}
where $J_1,\ldots,J_N$ are disjoint subsets of $R$ satisfying $|J_i|=j_i$
for $i=1,2,\ldots,N$.
\end{lemma}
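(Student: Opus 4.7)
My plan is to prove \eqref{eq:relation 3} by induction on $N$, using \eqref{eq:relation 2} both as the base case and as the engine for the inductive step.

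For the base case $N=1$, the statement \eqref{eq:relation 3} reads
\[
I_r(x;\bm{\omega},a)=\sum_{j_1=0}^{r}\dfrac{j_1!}{x^{j_1}}\sum_{\substack{J_1\subset R\\ |J_1|=j_1}}(a+|\bm{\omega}_{J_1}|)\,I_{r-j_1}(x+1;\bm{\omega}_{R\setminus J_1},a+|\bm{\omega}_{J_1}|),
\]
which is exactly \eqref{eq:relation 2}, so nothing needs to be proved here.

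Now assume \eqref{eq:relation 3} holds for some $N\ge 1$. The idea is to apply \eqref{eq:relation 2} to each innermost factor $I_{r-j_1-\cdots -j_N}(x+N;\bm{\omega}_{R\setminus (J_1\sqcup\cdots\sqcup J_N)},a+|\bm{\omega}_{J_1\sqcup\cdots\sqcup J_N}|)$ appearing in the induction hypothesis. Setting $R'=R\setminus(J_1\sqcup\cdots\sqcup J_N)$, $a'=a+|\bm{\omega}_{J_1\sqcup\cdots\sqcup J_N}|$, and $r'=r-j_1-\cdots -j_N$, \eqref{eq:relation 2} yields
\[
I_{r'}(x+N;\bm{\omega}_{R'},a')=\sum_{j_{N+1}=0}^{r'}\dfrac{j_{N+1}!}{(x+N)^{j_{N+1}}}\sum_{\substack{J_{N+1}\subset R'\\ |J_{N+1}|=j_{N+1}}}(a'+|\bm{\omega}_{J_{N+1}}|)\,I_{r'-j_{N+1}}(x+N+1;\bm{\omega}_{R'\setminus J_{N+1}},a'+|\bm{\omega}_{J_{N+1}}|).
\]
Since the $J_i$ are pairwise disjoint, a set $J_{N+1}\subset R'$ is precisely a subset of $R$ disjoint from $J_1\sqcup\cdots\sqcup J_N$, and $a'+|\bm{\omega}_{J_{N+1}}|=a+|\bm{\omega}_{J_1\sqcup\cdots\sqcup J_{N+1}}|$. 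Substituting this expansion into the induction hypothesis and keeping track of the telescoping data gives the asserted formula with $N+1$ in place of $N$; the constraint $j_{N+1}\le r'=r-j_1-\cdots -j_N$ combines with $j_1+\cdots +j_N\le r$ to produce the desired constraint $j_1+\cdots +j_{N+1}\le r$.

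I expect no serious obstacle: the argument is purely a bookkeeping induction. The one place to be careful is the identification of the indexing sets, namely verifying that letting $J_{N+1}$ range over subsets of $R'$ of size $j_{N+1}$ is the same as letting $J_{N+1}$ range over subsets of $R$ of size $j_{N+1}$ disjoint from $J_1\sqcup\cdots\sqcup J_N$, so that the combined inner sum on $J_1,\ldots,J_{N+1}$ has precisely the form stated in \eqref{eq:relation 3}. Once this matching of indices is checked, and the factor $(a'+|\bm{\omega}_{J_{N+1}}|)=a+|\bm{\omega}_{J_1\sqcup\cdots\sqcup J_{N+1}}|$ is absorbed into the product of $(a+|\bm{\omega}_{J_1\sqcup\cdots\sqcup J_i}|)$ for $i=1,\ldots,N+1$, the induction closes.
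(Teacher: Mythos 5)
Your proposal is correct and matches the paper's own proof essentially verbatim: both proceed by induction on $N$, taking \eqref{eq:relation 2} as the base case and applying \eqref{eq:relation 2} to the innermost integral $I_{r-j_1-\cdots-j_N}(x+N;\cdot,\cdot)$ to pass from $N$ to $N+1$. The bookkeeping points you flag (identifying $J_{N+1}\subset R'$ with subsets of $R$ disjoint from $J_1\sqcup\cdots\sqcup J_N$, and absorbing $a'+|\bm{\omega}_{J_{N+1}}|=a+|\bm{\omega}_{J_1\sqcup\cdots\sqcup J_{N+1}}|$ into the product) are exactly the steps the paper carries out.
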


\begin{proof}
We use induction on $N$.
For $N=1$, \eqref{eq:relation 3} is true, because it equals (\ref{eq:relation 2}).
Assume that \eqref{eq:relation 3} holds for some fixed $N$.
Then, using (\ref{eq:relation 2}), we have
\begin{align*}
&I_{r-j_1-\cdots -j_N}(x+N;\bm{\omega}_{R\setminus(J_1\sqcup\cdots\sqcup J_N)},a+|\bm{\omega}_{J_1\sqcup\cdots\sqcup J_N}|)\\
&=\sum_{j_{N+1}=0}^{r-j_1-\cdots-j_N}
\dfrac{j_{N+1}!}{(x+N)^{j_{N+1}}}
\sum_{\substack{J_{N+1}\subset R\setminus(J_1\sqcup\cdots \sqcup J_N)\\|J_{N+1}|=j_{N+1}}}
(a+|\bm{\omega}_{J_1\sqcup\cdots\sqcup J_{N+1}}|)\\
&\quad \times
I_{r-j_1-\cdots-j_{N+1}}(x+N+1;\bm{\omega}_{R\setminus(J_1\sqcup\cdots\sqcup J_{N+1})},a+|\bm{\omega}_{J_1\sqcup\cdots\sqcup J_{N+1}}|).
\end{align*}
Substituting this into \eqref{eq:relation 3} for $N$,
we obtain \eqref{eq:relation 3} for $N+1$.
Hence, Lemma \ref{I-N} is true.
\end{proof}

For $s=0,1,\ldots,r$,
let $T_N(s)$ denote the sum of the terms where $j_1+\cdots +j_N=s$ in \eqref{eq:relation 3}.
Namely,
\begin{align}
\label{def of T}
T_N(s)
&=\sum_{\substack{j_1,\ldots,j_N\ge 0 \\ j_1+\cdots+j_N=s}}
\dfrac{j_1!\cdots j_N!}{x^{j_1}(x+1)^{j_2}\cdots (x+N-1)^{j_N}}\\
&\quad \times
\sum_{J_1,\ldots,J_N} (a+|\bm{\omega}_{J_1}|) (a+|\bm{\omega}_{J_1\sqcup J_2}|)
\cdots (a+|\bm{\omega}_{J_1\sqcup \cdots \sqcup J_N}|) \notag\\
&\quad \times
I_{r-s}(x+N; \bm{\omega}_{R\setminus (J_1\sqcup \cdots \sqcup J_N)},
a+|\bm{\omega}_{J_1\sqcup \cdots \sqcup J_N}|), \notag
\end{align}
where $J_1,\ldots,J_N$ are disjoint subsets of $R$ satisfying $|J_i|=j_i$
for $i=1,2,\ldots,N$.

\begin{lemma}
\label{lim_N}
If $0\le s\le r-1$, then $\displaystyle \lim_{N\rightarrow \infty} T_N(s)=0$.
\end{lemma}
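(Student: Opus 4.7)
The plan is to combine a uniform upper bound for the integral $I_{r-s}(x+N;\bm{\omega}',a')$ appearing in \eqref{def of T} with a decomposition of the product $\prod_{l=1}^N(a+|\bm{\omega}_{J_1\sqcup\cdots\sqcup J_l}|)$ according to the first index where $J_l\ne\emptyset$. The key estimate, which I would prove first, is that for $r-s\ge 1$ and $N$ sufficiently large,
\[
I_{r-s}(x+N;\bm{\omega}',a')\le\frac{C}{N^{r-s}(a+|\bm{\omega}|)^{x+N-(r-s)}},
\]
uniformly over every choice of disjoint subsets $J_1,\ldots,J_N\subset R$, where $\bm{\omega}'=\bm{\omega}_{R\setminus(J_1\sqcup\cdots\sqcup J_N)}$ and $a'=a+|\bm{\omega}_{J_1\sqcup\cdots\sqcup J_N}|$. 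To obtain it, I would substitute $u_i=t_i-1$ (so the denominator becomes $(a+|\bm{\omega}|+\bm{\omega}'\cdot\bm{u})^{x+N}$, using $a'+|\bm{\omega}'|=a+|\bm{\omega}|$), factor out $(a+|\bm{\omega}|)^{x+N}$, and apply the concavity inequality $\log(1+(r-s)^{-1}\sum_i z_i)\ge(r-s)^{-1}\sum_i\log(1+z_i)$ with $z_i=(r-s)\omega'_iu_i/(a+|\bm{\omega}|)$ to separate the variables. The resulting one-dimensional integrals $\int_0^\infty(1+\alpha_iu)^{-(x+N)/(r-s)}du$ each contribute a factor of size $O(1/N)$, and the uniform lower bound $\prod_i\omega'_i\ge\omega_{\min}^{r-s}$ with $\omega_{\min}=\min_i\omega_i$ keeps $C$ independent of the $J_l$'s.

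Next I would control the product in \eqref{def of T}. Setting $i_1=\min\{l:J_l\ne\emptyset\}$ (with $i_1=N+1$ if all $J_l$ are empty), the first $i_1-1$ factors equal $a$ and the remaining ones are at most $a+|\bm{\omega}|$, so
\[
\prod_{l=1}^N(a+|\bm{\omega}_{J_1\sqcup\cdots\sqcup J_l}|)\le a^{i_1-1}(a+|\bm{\omega}|)^{N-i_1+1}.
\]
Combining with the integral bound and writing $\rho=a/(a+|\bm{\omega}|)$, the product times the integral is at most $C_1\rho^{i_1-1}/N^{r-s}$ uniformly. The case $s=0$ is then direct: $T_N(0)=a^N I_r(x+N;\bm{\omega},a)=O(\rho^N/N^r)$, which is $0$ when $a=0$ and tends to $0$ geometrically when $0<a<a+|\bm{\omega}|$.

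For $s\ge 1$, after absorbing the factor $\prod_l j_{i_l}!$ from \eqref{def of T} against the $r!/(j_{i_1}!\cdots j_{i_k}!(r-s)!)$ choices of the $J_{i_l}$, one is reduced to showing
\[
\frac{1}{N^{r-s}}\sum_{k=1}^{s}\sum_{1\le i_1<\cdots<i_k\le N}\sum_{\substack{j_{i_1},\ldots,j_{i_k}\ge 1\\ j_{i_1}+\cdots+j_{i_k}=s}}\frac{\rho^{i_1-1}}{\prod_{l=1}^k(x+i_l-1)^{j_{i_l}}}\longrightarrow 0.
\]
Bounding $(x+i_l-1)^{-j_{i_l}}\le x^{-(j_{i_l}-1)}(x+i_l-1)^{-1}$ renders the sum over compositions uniformly bounded, after which the position sum factors as $\bigl(\sum_{i_1\ge 1}\rho^{i_1-1}/(x+i_1-1)\bigr)\bigl(\sum_{i=1}^N(x+i-1)^{-1}\bigr)^{k-1}/(k-1)!$; the first factor is finite since $\rho<1$, and the second is $O((\log N)^{k-1})$. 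The overall bound is $T_N(s)=O((\log N)^{s-1}/N^{r-s})$, which tends to $0$ because $r-s\ge 1$. (When $a=0$, only $i_1=1$ contributes and the same bound follows trivially.)

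The main obstacle is the uniform integral estimate in the first paragraph. A direct application of Lemma \ref{estimate for I} only bounds $I_{r-s}(x+N;\bm{\omega}',a')$ in terms of $(\prod_i\omega'_i)^{-(x+N)/(r-s)}$ rather than $(a+|\bm{\omega}|)^{-(x+N)}$, and this is generally too weak to absorb the factor $(a+|\bm{\omega}|)^{N-i_1+1}$ coming from $\prod_l p_l$. The concavity separation is what matches the base of the estimate to $(a+|\bm{\omega}|)^{x+N}$ while still extracting the polynomial $N^{-(r-s)}$ decay needed in the last step.
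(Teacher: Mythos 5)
Your argument is correct, but it parts ways with the paper's at the decisive step, so the comparison is worth recording. Both proofs rest on the same uniform estimate $I_{r-s}(x+N;\cdot,\cdot)\ll N^{-(r-s)}(a+|\bm{\omega}|)^{-x-N+(r-s)}$; the paper obtains it in one line by bounding $1/(t_1\cdots t_r)\le 1$ and computing the iterated integral $\int_1^\infty\cdots\int_1^\infty(\omega_1t_1+\cdots+\omega_rt_r+a)^{-x-N}\,dt_1\cdots dt_r$ exactly, so your substitution-plus-concavity separation, while valid, is heavier machinery than needed (though you correctly diagnose that Lemma \ref{estimate for I} alone is too weak here). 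The genuine divergence is in how the sum over $(j_1,\ldots,j_N)$ is tamed. The paper observes that since $|J_1\sqcup\cdots\sqcup J_N|=s<r$, every partial union misses some index, so each factor of the product is at most $a+|\bm{\omega}|-\min_i\omega_i$; this produces a geometric factor $\bigl((a+|\bm{\omega}|-\min_i\omega_i)/(a+|\bm{\omega}|)\bigr)^N$ that crushes the crude count $\ll N^{s}$ of terms, the denominators $x^{j_1}(x+1)^{j_2}\cdots(x+N-1)^{j_N}$ being discarded after the trivial bound $\ge x^s$. You keep only the $\rho^{\,i_1-1}$ decay coming from the leading empty blocks and instead extract the saving from those denominators, cutting the position sum down to $O((\log N)^{s-1})$ via the harmonic-sum bound. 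Both succeed: the paper's route yields the sharper rate $N^{2s-r}\theta^N$ with $\theta<1$, whereas yours gives $(\log N)^{s-1}N^{-(r-s)}$, which still suffices precisely because $r-s\ge 1$; in exchange, your version avoids the ``missing index'' observation entirely and makes visible exactly which factors supply the decay.
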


\begin{proof}
If $x>r\ge 1$, then
\begin{align*}
I_r(x;\bm{\omega},a)
&\le 
\int_1^\infty \cdots \int_1^\infty 
\dfrac{1}{(\omega_1t_1+\cdots +\omega_rt_r+a)^x} \, dt_1\cdots dt_r\\
&=
\dfrac{(a+|\bm{\omega}|)^{-x+r}}{\omega_1\cdots \omega_r (x-1)(x-2)\cdots (x-r)}.
\end{align*}
Hence  we obtain
\[
I_{r-s}(x+N; \bm{\omega}_{R\setminus (J_1\sqcup \cdots \sqcup J_N)},
a+|\bm{\omega}_{J_1\sqcup \cdots \sqcup J_N}|)
\ll
\dfrac{(a+|\bm{\omega}|)^{-N}}{N^{r-s}}
\]
as $N\to \infty$.
Moreover, we see that 
$x^{j_1}(x+1)^{j_2}\cdots (x+N-1)^{j_N}\ge x^s\gg 1$ and
\begin{align}\label{koremotuika}
(a+|\bm{\omega}_{J_1}|) (a+|\bm{\omega}_{J_1\sqcup J_2}|)
\cdots (a+|\bm{\omega}_{J_1\sqcup \cdots \sqcup J_N}|)
\le (a+|\bm{\omega}'|)^N,
\end{align}
where $\bm{\omega}'=(\omega_1,\ldots,\hat{\omega}_{i_0},\ldots,\omega_r)$ with
$\omega_{i_0}=\min_i \omega_i$, hence
$|\bm{\omega}'|=|\bm{\omega}|-\omega_{i_0}$.
The inequality \eqref{koremotuika} holds because $|J_1\sqcup\cdots \sqcup J_N|=s<r=|R|$.
Combining the above estimates, we obtain
\begin{equation}
T_N(s)
\ll \dfrac{1}{N^{r-s}}
\left(\dfrac{a+|\bm{\omega}'|}{a+|\bm{\omega}|}\right)^N
\sum_{\substack{j_1,\ldots,j_N\ge 0 \\ j_1+\cdots+j_N=s}}
j_1!\cdots j_N!
\sum_{J_1,\ldots,J_N}1.
\end{equation}
Also, since
\[
\sum_{\substack{j_1,\ldots,j_N\ge 0 \\ j_1+\cdots+j_N=s}}
j_1!\cdots j_N!
\sum_{J_1,\ldots,J_N}1
=\frac{r!}{(r-s)!} \binom{s+N-1}{s}
\ll N^s,
\]
we have
\[
T_N(s)\ll N^{2s-r} \left(\dfrac{a+|\bm{\omega}'|}{a+|\bm{\omega}|}\right)^N
\to 0
\]
as $N\to \infty$, because $|\bm{\omega}'|<|\bm{\omega}|$.
This completes the proof of Lemma \ref{lim_N}.
\end{proof}

We introduce two types of multiple polylogarithms of Hurwitz type as follows:
\begin{align}
\Li_{\bm{k}}^0(x;\bm{z})
&=\sum_{0\le n_1<n_2<\cdots <n_r}
\dfrac{z_1^{n_1}z_2^{n_2}\cdots z_r^{n_r}}
{(n_1+x)^{k_1}(n_2+x)^{k_2}\cdots (n_r+x)^{k_r}},
\label{eq:Li0}\\
\Li_{\bm{k}}^1(x;\bm{z})
&=\sum_{1\le n_1<n_2<\cdots <n_r}
\dfrac{z_1^{n_1}z_2^{n_2}\cdots z_r^{n_r}}
{(n_1+x)^{k_1}(n_2+x)^{k_2}\cdots (n_r+x)^{k_r}},
\label{eq:Li1}
\end{align}
where $r\in\Zb_{\ge 1}$, $x>0$, $\bm{k}=(k_1,\ldots,k_r)\in(\Zb_{\ge 1})^r$,
and $\bm{z}=(z_1,\ldots,z_r)\in\Cb^r$ with $|z_i|<1$.
Here, $z_1^{n_1}$ is interpreted as $1$ when $z_1=0$ and $n_1=0$.
We now express $(a+|\bm{\omega}|)^x I_r(x;\bm{\omega},a)$
as a finite sum of functions of the form \eqref{eq:Li0}.

\begin{proposition}
\label{relation between I and Li0}
For $r\in\Zb_{\ge 1}$, we have
\begin{align}
\label{eq:relation between I and Li0}
&(a+|\bm{\omega}|)^x I_r(x;\bm{\omega},a)\\
&=
\sum_{s=1}^r \sum_{\substack{\bm{k}\in (\Zb_{\ge 1})^s \\ k_1+\cdots +k_s=r}}
k_1!\cdots k_s! \notag\\
&\quad \times
\sum_{K_1,\ldots,K_s}
\Li_{\bm{k}}^0
\biggl(x;
\dfrac{a}{a+|\bm{\omega}_{K_1}|},
\dfrac{a+|\bm{\omega}_{K_1}|}{a+|\bm{\omega}_{K_1\sqcup K_2}|},
\ldots,
\dfrac{a+|\bm{\omega}_{K_1\sqcup \cdots \sqcup K_{s-1}}|}
{a+|\bm{\omega}|}
\biggr), \notag
\end{align}
where $K_1,\ldots,K_s$ are disjoint subsets of $R$ satisfying $|K_i|=k_i$
for $i=1,2,\ldots,s$.
\end{proposition}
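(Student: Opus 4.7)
The plan is to apply Lemma \ref{I-N} with $N\to\infty$. Writing $I_r(x;\bm{\omega},a)=\sum_{s=0}^r T_N(s)$ as in \eqref{def of T}, Lemma \ref{lim_N} shows that $T_N(s)\to 0$ for every $s<r$, so only the top-degree term survives, yielding $I_r(x;\bm{\omega},a)=\lim_{N\to\infty}T_N(r)$. The strategy is then to reorganize $T_N(r)$ according to the positions at which the composition $(j_1,\ldots,j_N)$ of $r$ is supported, and to recognize the limit as the claimed finite sum of Hurwitz-type polylogarithms $\Li_{\bm{k}}^0$.

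In $T_N(r)$ the constraint $j_1+\cdots+j_N=r$ forces at most $r$ entries of $(j_1,\ldots,j_N)$ to be nonzero. I would group the terms by the number $s\in\{1,\ldots,r\}$ of nonzero entries and by the positions $1\le i_1<\cdots<i_s\le N$ at which they occur. Setting $k_\alpha=j_{i_\alpha}$, $K_\alpha=J_{i_\alpha}$, and $m_\alpha=i_\alpha-1$, the tuple $(k_1,\ldots,k_s)$ becomes a composition of $r$ with $k_1+\cdots+k_s=r$, the $K_\alpha$'s become disjoint subsets of $R$ with $|K_\alpha|=k_\alpha$ whose disjoint union is all of $R$ (because the total size equals $r=|R|$), the factorial $j_1!\cdots j_N!$ collapses to $k_1!\cdots k_s!$, and the denominator $x^{j_1}(x+1)^{j_2}\cdots(x+N-1)^{j_N}$ becomes $\prod_{\alpha=1}^s (x+m_\alpha)^{k_\alpha}$. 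Moreover, the residual factor simplifies to $I_0(x+N;\emptyset,a+|\bm{\omega}|)=(a+|\bm{\omega}|)^{-(x+N)}$.

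The heart of the proof is a telescoping identity. Writing $b_\alpha=a+|\bm{\omega}_{K_1\sqcup\cdots\sqcup K_\alpha}|$ (so $b_0=a$ and $b_s=a+|\bm{\omega}|$), the product $\prod_{i=1}^N(a+|\bm{\omega}_{J_1\sqcup\cdots\sqcup J_i}|)$ is piecewise constant in $i$, equal to $b_\alpha$ on each block $[i_\alpha,i_{\alpha+1}-1]$ (with the convention $i_0=1$ and $i_{s+1}=N+1$); counting multiplicities gives the product $b_0^{m_1}b_1^{m_2-m_1}\cdots b_{s-1}^{m_s-m_{s-1}}b_s^{N-m_s}$. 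Multiplying through by $(a+|\bm{\omega}|)^x=b_s^x$ and combining with the factor $b_s^{-(x+N)}$ above, the $N$-dependent piece cancels and the remaining product collapses telescopically to $\prod_{\alpha=1}^s(b_{\alpha-1}/b_\alpha)^{m_\alpha}$. Summing over $0\le m_1<\cdots<m_s$ as $N\to\infty$ then produces exactly $\Li_{\bm{k}}^0(x;b_0/b_1,\ldots,b_{s-1}/b_s)$, which matches the right-hand side of \eqref{eq:relation between I and Li0}.

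The main technical point to verify is the legitimacy of passing the limit through the nested sums. The outer sums over $s$, $\bm{k}$, and $(K_1,\ldots,K_s)$ are finite and cause no issue. For the innermost sum one needs absolute convergence of the resulting $\Li_{\bm{k}}^0$ series; this is immediate because each ratio $z_\alpha=b_{\alpha-1}/b_\alpha$ lies strictly in $(0,1)$ (since $|\bm{\omega}_{K_\alpha}|>0$), so the $\bm{z}^{\bm{m}}$ factor decays geometrically and dominates the truncated partial sums uniformly in $N$. Hence the limit and the finite outer sums commute, giving the stated identity.
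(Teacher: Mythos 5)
Your proposal is correct and follows essentially the same route as the paper: it invokes Lemma \ref{I-N} together with Lemma \ref{lim_N} to reduce to $\lim_{N\to\infty}T_N(r)$, reindexes each composition $(j_1,\ldots,j_N)$ by the positions of its nonzero entries (the paper phrases this as an explicit bijection onto tuples $(k_1,\ldots,k_s;n_1,\ldots,n_s)$), and uses the same telescoping of the product $\prod_i(a+|\bm{\omega}_{J_1\sqcup\cdots\sqcup J_i}|)$ against $(a+|\bm{\omega}|)^{-x-N}$ to produce the ratios $b_{\alpha-1}/b_\alpha$ and hence $\Li^0_{\bm{k}}$. Your explicit remark on absolute convergence of the limiting series (each ratio lying in $(0,1)$) is a point the paper passes over silently, but otherwise the two arguments coincide.
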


\begin{proof}
It is evident from Lemmas \ref{I-N} and \ref{lim_N} that
$\displaystyle I_r(x;\bm{\omega},a)=\lim_{N\rightarrow \infty} T_N(r)$. Thus, in what follows, we evaluate $T_N(r)$.
Note that
\begin{align*}
T_N(r)
&=\sum_{\substack{j_1,j_2,\ldots,j_N\ge 0 \\ j_1+j_2+\cdots+j_N=r}}
\dfrac{j_1!j_2!\cdots j_N!}
{x^{j_1}(x+1)^{j_2}\cdots (x+N-1)^{j_N}}\\
&\quad
\times \sum_{J_1,\ldots,J_N}
(a+|\bm{\omega}_{J_1}|)\cdots (a+|\bm{\omega}_{J_1\sqcup\cdots\sqcup J_N}|)
\cdot (a+|\bm{\omega}|)^{-x-N},
\end{align*}
where $J_1,\ldots,J_N$ are disjoint subsets of $R$ satisfying $|J_i|=j_i$ for $i=1,2,\ldots,N$.
For each $(j_1,j_2,\ldots,j_N)$,
we set 
$\{n_1,n_2,\ldots,n_s\}=\{n\in \{0,1,\ldots,N-1\} \mid j_{n+1}\ne 0\}$
where $0\le n_1<n_2<\cdots <n_s<N$.
It is clear that $1\le s\le r$.
We put $k_i=j_{n_i+1}$ and $K_i=J_{n_i+1}$
for $i=1,2,\ldots,s$.
Then we see that $k_1,\ldots,k_s\in \Zb_{\ge 1}$ with $k_1+\cdots+k_s=r$
and that $K_1,\ldots,K_s$ are disjoint subsets of $R$ satisfying $|K_i|=k_i$
for $i=1,2,\ldots,s$.
Moreover, we have
\begin{align*}
&\dfrac{j_1!j_2!\cdots j_N!}
{x^{j_1}(x+1)^{j_2}\cdots (x+N-1)^{j_N}}\\
&\quad \times \sum_{J_1,\ldots,j_N}
(a+|\bm{\omega}_{J_1}|)\cdots (a+|\bm{\omega}_{J_1\sqcup\cdots\sqcup J_N}|)
\cdot (a+|\bm{\omega}|)^{-x-N}\\[5pt]
&=
\dfrac{k_1!k_2!\cdots k_s!}{(x+n_1)^{k_1} (x+n_2)^{k_2}\cdots (x+n_s)^{k_s}}
\sum_{K_1,\ldots,K_s}
a^{n_1}(a+|\bm{\omega}_{K_1}|)^{n_2-n_1}\\
&\qquad \times
\cdots \times (a+|\bm{\omega}_{K_1\sqcup\cdots\sqcup K_{s-1}}|)^{n_s-n_{s-1}}
(a+|\bm{\omega}|)^{N-n_s}
\cdot (a+|\bm{\omega}|)^{-x-N}\\[3pt]
&=
k_1!\cdots k_s!
\sum_{K_1,\ldots,K_s}
\dfrac{\bigl(\frac{a}{a+|\bm{\omega}_{K_1}|}\bigr)^{n_1}
\bigl(\frac{a+|\bm{\omega}_{K_1}|}{a+|\bm{\omega}_{K_1\sqcup K_2}|}\bigr)^{n_2}
\cdots \bigl(\frac{a+|\bm{\omega}_{K_1\sqcup \cdots \sqcup K_{s-1}}|}{a+|\bm{\omega}|}\bigr)^{n_s}}
{(x+n_1)^{k_1} (x+n_2)^{k_2}\cdots (x+n_s)^{k_s}}
\cdot (a+|\bm{\omega}|)^{-x}.
\end{align*}
For each $s\in \{1,\ldots,r\}$,
we consider the following map defined by the above correspondence:
\begin{align*}
&\{(j_1,\ldots,j_N) \in (\Zb_{\ge 0})^N \mid
j_1+\cdots +j_N=r,  \ \#\{n \mid 0\le n\le N-1, \ j_{n+1}\ne 0\}=s\}\\
&\ni (j_1,j_2,\ldots,j_N)\\
&\mapsto
(k_1,k_2,\ldots,k_s;n_1,n_2,\ldots,n_s)\\
&\quad \in 
\left\{
(k_1,\ldots,k_s;n_1,\ldots,n_s)
\in (\Zb_{\ge 1})^s\times (\Zb_{\ge 0})^s
\, \middle| \,
\begin{array}{l}
k_1+\cdots+k_s=r\\
0\le n_1<n_2<\cdots <n_s< N
\end{array}
\right\}.
\end{align*}
Since this map is a bijection, we see
\begin{align*}
&(a+|\bm{\omega}|)^xT_N(r)\\
&=\sum_{s=1}^r
\sum_{\substack{\bm{k}\in (\Zb_{\ge 1})^s\\ k_1+\cdots+k_s=r}}
k_1!\cdots k_s!\\
&\quad \times 
\sum_{K_1,\ldots,K_s}
\sum_{0\le n_1<n_2<\cdots <n_s< N}
\dfrac{\bigl(\frac{a}{a+|\bm{\omega}_{K_1}|}\bigr)^{n_1}
\bigl(\frac{a+|\bm{\omega}_{K_1}|}{a+|\bm{\omega}_{K_1\sqcup K_2}|}\bigr)^{n_2}
\cdots \bigl(\frac{a+|\bm{\omega}_{K_1\sqcup \cdots \sqcup K_{s-1}}|}{a+|\bm{\omega}|}\bigr)^{n_s}}
{(x+n_1)^{k_1} (x+n_2)^{k_2}\cdots (x+n_s)^{k_s}}.
\end{align*}
Taking the limit as $N\rightarrow \infty$,
we obtain \eqref{eq:relation between I and Li0}.
\end{proof}

To prove Theorem \ref{power series for I},
it is more convenient to express $(a+|\bm{\omega}|)^xI_r(x;\bm{\omega},a)$ as a finite sum of functions of the form \eqref{eq:Li1}, as follows.
\begin{proposition}
\label{I-Li1}
\begin{align}
\label{eq:relation between I and Li1}
&(a+|\bm{\omega}|)^x
I_r(x;\bm{\omega},a)\\
&=
r!x^{-r}+\sum_{t=1}^r\sum_{s=1}^t
\sum_{\substack{\bm{k}\in (\Zb_{\ge 1})^s \\ k_1+\cdots+k_s=t}}
 (r-t)!k_1!\dots k_s! x^{t-r} \notag\\
&\quad \times
\sum_{K_1,\ldots,K_s}
\Li_{\bm{k}}^1
\biggl(x;
\dfrac{a+|\bm{\omega}_{K_0}|}{a+|\bm{\omega}_{K_0\sqcup K_1}|},
\dfrac{a+|\bm{\omega}_{K_0\sqcup K_1}|}{a+|\bm{\omega}_{K_0\sqcup K_1\sqcup K_2}|},
\ldots,
\dfrac{a+|\bm{\omega}_{K_0\sqcup \cdots \sqcup K_{s-1}}|}
{a+|\bm{\omega}|}
\biggr), \notag
\end{align}
where $K_1,\ldots,K_s$ are disjoint subsets of $R$
satisfying $|K_i|=k_i$ for $i=1,2,\ldots,s$
and  $K_0=R\setminus \bigsqcup_{i=1}^s K_i$
\end{proposition}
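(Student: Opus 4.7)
The plan is to derive \eqref{eq:relation between I and Li1} from Proposition \ref{relation between I and Li0} by converting each $\Li_{\bm{k}}^0$ appearing there into $\Li_{\bm{k}}^1$. The key ingredient is the elementary identity obtained by splitting the outer sum in \eqref{eq:Li0} according to whether $n_1 = 0$ or $n_1 \geq 1$:
\[
\Li_{\bm{k}}^0(x;\bm{z}) = \Li_{\bm{k}}^1(x;\bm{z}) + \frac{1}{x^{k_1}}\,\Li_{(k_2,\ldots,k_s)}^1\bigl(x;(z_2,\ldots,z_s)\bigr),
\]
where the convention $z_1^0 = 1$ accounts for the $n_1 = 0$ contribution and, by convention, the empty case equals $1$, so that when $s = 1$ the identity reduces to $\Li_{(k_1)}^0(x;z_1) = \Li_{(k_1)}^1(x;z_1) + x^{-k_1}$.

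Substituting this identity into each summand of \eqref{eq:relation between I and Li0} splits the right-hand side into two pieces. The $\Li_{\bm{k}}^1(x;\bm{z})$ piece retains the prefactor $k_1!\cdots k_s!$, and its first argument $a/(a+|\bm{\omega}_{K_1}|)$ coincides with $(a+|\bm{\omega}_{K_0}|)/(a+|\bm{\omega}_{K_0\sqcup K_1}|)$ for the choice $K_0 = \emptyset$; the remaining arguments likewise match those displayed in \eqref{eq:relation between I and Li1} for that choice. Because $k_1+\cdots+k_s = r$ in the original summation, this piece accounts exactly for the $t = r$ terms on the right-hand side of \eqref{eq:relation between I and Li1}, where $(r-t)! = 1$ and $x^{t-r} = 1$.

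For the remaining piece $x^{-k_1}\,\Li_{(k_2,\ldots,k_s)}^1(x;(z_2,\ldots,z_s))$, I would reindex by promoting the original $K_1$ to play the role of the new $K_0$ and relabeling $(K_2,\ldots,K_s)$ as the new $(K_1,\ldots,K_{s-1})$. With $t := k_2+\cdots+k_s = r - k_1$, the arguments $(z_2,\ldots,z_s)$ take exactly the form displayed in \eqref{eq:relation between I and Li1}, and the prefactor $k_1!\cdots k_s!/x^{k_1}$ becomes $(r-t)!\,k_2!\cdots k_s!\,x^{t-r}$, again matching. The subcase $s = 1$ is degenerate: the tail polylogarithm is the empty one, so the total contribution is $r!/x^r$, which is the leading term on the right-hand side of \eqref{eq:relation between I and Li1}. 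The subcases $s \geq 2$ produce the terms with $t \in \{1,\ldots,r-1\}$ and reindexed number of parts in $\{1,\ldots,t\}$.

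The substance of the argument is concentrated in Proposition \ref{relation between I and Li0} together with the splitting identity; the main obstacle is purely combinatorial bookkeeping, namely verifying that the map $(s,\bm{k},K_1,K_2\ldots,K_s) \mapsto (s-1,(k_2,\ldots,k_s),K_1,K_2,\ldots,K_s)$, in which the displayed $K_1$ becomes the new $K_0$, is a bijection onto the indexing set of the second group of terms in \eqref{eq:relation between I and Li1}, and confirming that the prefactors, arguments, and summation ranges all align as claimed.
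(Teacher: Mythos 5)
Your proposal is correct and follows essentially the same route as the paper: the authors likewise split $\Li_{\bm{k}}^0$ into $\Li_{\bm{k}}^1$ plus $x^{-k_1}\Li_{(k_2,\ldots,k_s)}^1$ according to whether $n_1=0$, identify the $\Li_{\bm{k}}^1$ piece with the $t=r$ terms (where $K_0=\emptyset$), and reindex the remaining piece by setting $t=r-k_1$ and promoting $K_1$ to the new $K_0$, with the $s=1$ degenerate case yielding $r!x^{-r}$. The bookkeeping you describe is exactly the bijection the paper verifies.
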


\begin{proof}
It easily follows from \eqref{eq:Li0} and \eqref{eq:Li1} that
\begin{equation}
\Li_{\bm{k}}^0(x;\bm{z})
=
\begin{cases}
x^{-k_1} +\Li_{\bm{k}}^1(x;\bm{z})
& \mbox{if $r=1$}\\
x^{-k_1} \Li_{(k_2,\ldots,k_r)}^1 (x;z_2,\ldots,z_r)
+\Li_{\bm{k}}^1(x;\bm{z})
& \mbox{if $r\ge 2$}.
\end{cases}
\end{equation}
Hence, we can rewrite \eqref{eq:relation between I and Li0} as
\begin{align}
\label{eq:I-Li1}
&(a+|\bm{\omega}|)^xI_r(x;\bm{\omega},a)\\
&=
r! x^{-r}
+\sum_{s=2}^r \sum_{\substack{\bm{k} \in (\Zb_{\ge 1})^s\\ k_1+\cdots+k_s=r}}
k_1!\cdots k_s! \notag\\
&\qquad \times
\sum_{K_1,\ldots,K_s}
x^{-k_1}
\Li_{(k_2,\ldots,k_s)}^1\biggl(
x;\dfrac{a+|\bm{\omega}_{K_1}|}{a+|\bm{\omega}_{K_1\sqcup K_2}|},\ldots,
\dfrac{a+|\bm{\omega}_{K_1\sqcup\cdots \sqcup K_{s-1}}|}{a+|\bm{\omega}|}\biggr) \notag\\
&\quad +
\sum_{s=1}^r \sum_{\substack{\bm{k}\in (\Zb_{\ge 1})^s\\ k_1+\cdots+k_s=r}}
k_1!\cdots k_s!
\sum_{K_1,\cdots,K_s}
\Li_{\bm{k}}^1\biggl(
x;\dfrac{a}{a+|\bm{\omega}_{K_1}|},\ldots,
\dfrac{a+|\bm{\omega}_{K_1\sqcup\cdots\sqcup K_{s-1}}|}{a+|\bm{\omega}|}\biggr). \notag
\end{align}
Note that the third term on the right-hand side of \eqref{eq:I-Li1}
are equal to the second term when $t=r$ on the right-hand side of \eqref{eq:relation between I and Li1} (because in this case $K_0=\emptyset$).
Therefore, it suffices to show that the second term, denoted as $S$, on the right-hand side of \eqref{eq:I-Li1}
is the same as the second term restricted by $t\le r-1$ on the right-hand side of \eqref{eq:relation between I and Li1}.
For $S$,
set $t=k_1$, $\bm{l}=(l_1,\ldots,l_{s-1})=(k_2,\ldots,k_s)$,
and $L_i=K_{i+1}$ for $i=0,1,\ldots,s-1$.
Then clearly $L_0=R\setminus \bigsqcup_{i=1}^{s-1} L_i$ and
\begin{align*}
S&=
\sum_{s=2}^r \sum_{t=1}^{r-s+1}
\sum_{\substack{\bm{l} \in (\Zb_{\ge 1})^{s-1}\\l_1+\cdots+l_{s-1}=r-t}}
t! l_1!\cdots l_{s-1}! x^{-t}\\
&\quad \times
\sum_{L_1,\ldots,L_{s-1}}
\Li_{\bm{l}}^1\biggl(
x;\dfrac{a+|\bm{\omega}_{L_0}|}{a+|\bm{\omega}_{L_0\sqcup L_1}|},
\ldots,
\dfrac{a+|\bm{\omega}_{L_0\sqcup\cdots\sqcup L_{s-2}}|}{a+|\bm{\omega}|}
\biggr),
\end{align*}
where $L_1,\ldots,L_{s-1}$ are disjoint subsets of $R$
satisfying $|L_i|=l_i$ for $i=1,\ldots,s-1$.
Moreover, by putting $u=s-1$ and $v=r-t$,
we have
\begin{align*}
S&=
\sum_{u=1}^{r-1} \sum_{v=u}^{r-1}
\sum_{\substack{\bm{l} \in (\Zb_{\ge 1})^{u}\\l_1+\cdots+l_u=v}}
(r-v)! l_1!\cdots l_u! x^{v-r}\\
&\quad \times
\sum_{L_1,\ldots,L_u}
\Li_{\bm{l}}^1\biggl(
x;\dfrac{a+|\bm{\omega}_{L_0}|}{a+|\bm{\omega}_{L_0\sqcup L_1}|},
\ldots,
\dfrac{a+|\bm{\omega}_{L_0\sqcup\cdots\sqcup L_{u-1}}|}{a+|\bm{\omega}|}
\biggr).
\end{align*}
By interchanging the sums over $u$ and $v$,
we see that $S$ coincides with the second term restricted by $t\le r-1$ on the right-hand side of \eqref{eq:relation between I and Li1}.
This completes the proof of \eqref{eq:relation between I and Li1}.
\end{proof}

To derive  \eqref{eq:power series for I} from \eqref{eq:relation between I and Li1},
we need the following
\begin{lemma}
\label{Li1-Li}
Assume $\bm{k}\in (\Zb_{\ge 1})^r$.
If $0<x<1$, then
\begin{equation}
\label{eq:Li1-Li}
\Li_{\bm{k}}^1(x;\bm{z})
=\sum_{l=0}^\infty (-x)^l
\sum_{\substack{\bm{l}\in(\Zb_{\ge 0})^r\\ l_1+\cdots+l_r=l}}
\prod_{i=1}^r \binom{k_i+l_i-1}{l_i}
\cdot \Li_{\bm{k}+\bm{l}}(\bm{z}).
\end{equation}
\end{lemma}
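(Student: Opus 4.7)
My plan is to start from the definition \eqref{eq:Li1} and expand each factor $(n_i+x)^{-k_i}$ as a binomial series in the small parameter $x/n_i$. Since $0<x<1$ and $n_i\ge 1$ (because the summation in \eqref{eq:Li1} is over $n_1\ge 1$), we have $|x/n_i|\le x<1$, so the generalized binomial expansion
\[
\frac{1}{(n_i+x)^{k_i}}
=\frac{1}{n_i^{k_i}}\Bigl(1+\frac{x}{n_i}\Bigr)^{-k_i}
=\sum_{l_i=0}^\infty (-1)^{l_i}\binom{k_i+l_i-1}{l_i}\frac{x^{l_i}}{n_i^{k_i+l_i}}
\]
is valid. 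Substituting this into every factor in the defining series of $\Li_{\bm{k}}^1(x;\bm{z})$ produces an iterated sum over $n_1<\cdots<n_r$ and over $l_1,\ldots,l_r\ge 0$.

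Next I would interchange the order of summation so that the inner sum becomes $\Li_{\bm{k}+\bm{l}}(\bm{z})$, and then collect the resulting terms according to the total degree $l=l_1+\cdots+l_r$. This is exactly the right-hand side of \eqref{eq:Li1-Li}, with the sign $(-1)^{l_1+\cdots+l_r}=(-1)^l$ absorbed into the factor $(-x)^l$.

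The only real obstacle is justifying the interchange of the infinite sums, which requires showing absolute convergence of the double series. For this I would bound
\[
\sum_{l_i=0}^\infty \binom{k_i+l_i-1}{l_i}\frac{x^{l_i}}{n_i^{k_i+l_i}}
=\frac{1}{(n_i-x)^{k_i}},
\]
which is finite for $n_i\ge 1$ and $0<x<1$, in particular bounded by a constant multiple of $n_i^{-k_i}$ uniformly in the other variables. Combining these bounds with the decay provided by $|z_i|<1$ (which guarantees the absolute convergence of $\sum_{n_1<\cdots<n_r}|z_1|^{n_1}\cdots|z_r|^{n_r}\prod n_i^{-k_i}$), Fubini's theorem for series applies, and the rearrangement is legitimate. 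Once this is in place, the identity \eqref{eq:Li1-Li} follows by the bookkeeping described above.
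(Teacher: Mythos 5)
Your proposal is correct and follows essentially the same route as the paper: expand each factor $(n_i+x)^{-k_i}$ via the generalized binomial series $\sum_{l_i\ge 0}\binom{k_i+l_i-1}{l_i}(-x)^{l_i}n_i^{-k_i-l_i}$, substitute into the defining series, and regroup by the total degree $l=l_1+\cdots+l_r$. The only difference is that you explicitly justify the rearrangement by the absolute-convergence bound $\sum_{l_i}\binom{k_i+l_i-1}{l_i}x^{l_i}n_i^{-k_i-l_i}=(n_i-x)^{-k_i}\le (1-x)^{-k_i}n_i^{-k_i}$, a step the paper leaves implicit.
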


\begin{proof}
Since
\[
\dfrac{1}{(n+x)^k}
=\dfrac{1}{n^k}\cdot \dfrac{1}{(1+x/n)^k}
=\dfrac{1}{n^k} \sum_{l=0}^\infty \binom{-k}{l} \left(\dfrac{x}{n}\right)^l
=\sum_{l=0}^\infty
\binom{k+l-1}{l} \dfrac{(-x)^l}{n^{k+l}}
\]
for $n\ge 1$, we have
\[
\Li_{\bm{k}}^1 (x;\bm{z})
=\sum_{1\le n_1<\cdots <n_r}
\sum_{l_1,\ldots,l_r=0}^\infty
\prod_{i=1}^r \binom{k_i+l_i-1}{l_i}
\cdot
\dfrac{(-x)^{l_1+\cdots +l_r}z_1^{n_1}\cdots z_r^{n_r}}
{n_1^{k_1+l_1}\cdots n_r^{k_r+l_r}}.
\]
This shows \eqref{eq:Li1-Li}.
\end{proof}

Finally, we prove Theorem \ref{power series for I}.
By Proposition  \ref{I-Li1}
and Lemma  \ref{Li1-Li},
we have
\begin{align*}
&(a+|\bm{\omega}|)^x I_r(x;\bm{\omega},a)\\
&=r!x^{-r}+
\sum_{l=0}^\infty
\sum_{t=1}^r
(-1)^l x^{t+l-r}\\
&\quad \times
\sum_{s=1}^t
\sum_{\substack{\bm{k}\in (\Zb_{\ge1})^s\\ k_1+\cdots+k_s=t}}
\sum_{\substack{\bm{l}\in (\Zb_{\ge 0})^s\\ l_1+\cdots+l_s=l}}
(r-t)! k_1!\cdots k_s!
\prod_{i=1}^s \binom{k_i+l_i-1}{l_i}\\
&\quad
\times
\sum_{K_1,\ldots,K_s}
\Li_{\bm{k}+\bm{l}}\biggl(
\dfrac{a+|\bm{\omega}_{K_0}|}{a+|\bm{\omega}_{K_0\sqcup K_1}|},
\ldots,
\dfrac{a+|\bm{\omega}_{K_0\sqcup\cdots\sqcup K_{s-1}}|}{a+|\bm{\omega}|}
\biggr).
\end{align*}
By setting $m=t+l$ and rewriting the sum over $l$
in terms of a sum over $m$,
we obtain Theorem \ref{power series for I}.

\section{Application to multiple polylogarithms}
\label{sec:application to MPL}
In this section, we establish Theorems \ref{relation among MPL} and \ref{v9 relation between coefficients c and d}, which give some nontrivial relations among multiple polylogarithms.

We begin to see that the first factors of the main terms in \eqref{eq:asymptotic for I} and \eqref{eq:asymptotic for M}
have the following Taylor expansions at $x=0$, whose coefficients can be expressed in terms of the complete exponential Bell polynomials $B_k(x_1,\ldots,x_k)$
defined by \eqref{recurrence relation for Bell polynomials}.

\begin{proposition}
For $x\in\Cb$, we have
\begin{equation}
\label{power series for Gamma 1}
\dfrac{e^{-\gamma x}}{\Gamma(x+1)}
=\sum_{k=0}^\infty
B_k(0,-1!\zeta(2),2!\zeta(3),\ldots,(-1)^{k-1}(k-1)!\zeta(k))
\dfrac{x^k}{k!}
\end{equation}
and
\begin{equation}
\label{power series for Gamma 2}
\dfrac{1}{\Gamma(x+1)}
=\sum_{k=0}^\infty
B_k(\gamma,-1!\zeta(2),2!\zeta(3),\ldots,(-1)^{k-1}(k-1)!\zeta(k))
\dfrac{x^k}{k!}.
\end{equation}
\end{proposition}

\begin{proof}
By \eqref{eq:generating function} and \eqref{eq:power series of log gamma}, we see that 
\[
\dfrac{e^{-\gamma x}}{\Gamma(1+x)}
=
\sum_{n=0}^\infty 
B_n(0,-1!\zeta(2),2!\zeta(3),\ldots,(-1)^{n-1}(n-1)! \zeta(n))
\dfrac{x^n}{n!}
\]
when $0<x<1$.
Since $e^{-\gamma x}/\Gamma(x+1)$ is an entire function,
\eqref{power series for Gamma 1} holds for $x\in\Cb$.
Lastly, we multiply the both sides of \eqref{eq:power series of log gamma}
by $e^{\gamma x}$ and argue similarly to obtain
\eqref{power series for Gamma 2}. 
\end{proof}



We now prove Theorem \ref{relation among MPL}. 
\begin{proof}[Proof of Theorem \ref{relation among MPL}]
The value
$c_{r,m}(\bm{\omega},a)$ is the coefficient of $x^{m-r}$ in the power series of
$(a+|\bm{\omega}|)^x I_r(x;\bm{\omega},a)$.
To evaluate it from \eqref{eq:asymptotic for I},
we first note the relation
\[
(a+|\bm{\omega}|)^x I_r(x;\bm{\omega},a)
=I_r\biggl(x;\dfrac{\bm{\omega}}{a+|\bm{\omega}|},\dfrac{a}{a+|\bm{\omega}|}\biggr),
\]
which immediately follows from the definition of $I_r(x;\bm{\omega},a)$.
Now, applying \eqref{eq:asymptotic for I} to the integral on the right-hand side of the above, we obtain
\[
(a+|\bm{\omega}|)^x I_r(x;\bm{\omega},a)
=\dfrac{e^{-\gamma x}}{\Gamma(x+1)}
\sum_{l=0}^r (-1)^l (r-l)! \Lambda_l\left(\dfrac{\bm{\omega}}{a+|\bm{\omega}|}\right)x^{l-r}
+O(x).
\]
Combining this asymptotic expansion and \eqref{power series for Gamma 1},
we have
\begin{align*}
&(a+|\bm{\omega}|)^x I_r(x;\bm{\omega},a)\\
&=\sum_{k=0}^{\infty}B_k(0,-1!\zeta(2),2!\zeta(3),\ldots,(-1)^{k-1}(k-1)!\zeta(k))
\frac{x^k}{k!}\\
&\qquad\times\sum_{l=0}^r (-1)^l (r-l)! \Lambda_l\left(\dfrac{\bm{\omega}}{a+|\bm{\omega}|}\right)x^{l-r}+O(x)\\
&=\sum_{m=0}^{\infty} x^{m-r}
\sum_{k=0}^m (-1)^{m-k}\dfrac{(r-m+k)!}{k!}
\Lambda_{m-k}\left(\dfrac{\bm{\omega}}{a+|\bm{\omega}|}\right)\\
&\qquad \times
B_k(0,-1!\zeta(2),2!\zeta(3),\ldots,(-1)^{k-1}(k-1)!\zeta(k))\\
&\quad +O(x)\\
&=\sum_{m=0}^r x^{m-r}
\sum_{k=0}^m (-1)^{m-k}\dfrac{(r-m+k)!}{k!}
\Lambda_{m-k}\left(\dfrac{\bm{\omega}}{a+|\bm{\omega}|}\right)\\
&\qquad \times
B_k(0,-1!\zeta(2),2!\zeta(3),\ldots,(-1)^{k-1}(k-1)!\zeta(k))\\
&\quad +O(x).
\end{align*}
This establishes Theorem \ref{relation among MPL}.
\end{proof}

We notice that \eqref{eq:relation among MPL} apparently produces
infinitely many relations among multiple polylogarithms of fixed weight $m$.
However, the case $r=m$ in \eqref{eq:relation among MPL} is essential.
In fact, the other relations follow from this case, as is easily seen from the following

\begin{proposition}
Let $c'_{r,m}(\bm{\omega},a)$ denote the right-hand side of \eqref{eq:relation among MPL}.
Then, without using \eqref{eq:relation among MPL}, we can independently prove that
\begin{equation}
\label{eq:recurrent relation for c}
c_{r,m}(\bm{\omega},a)=\sum_{i=1}^r c_{r-1,m}(\bm{\omega}_{R\setminus \{i\}},a+\omega_i)
\end{equation}
and
\begin{equation}
\label{eq:recurrent relation for c'}
c'_{r,m}(\bm{\omega},a)=\sum_{i=1}^r c'_{r-1,m}(\bm{\omega}_{R\setminus \{i\}},a+\omega_i)
\end{equation}
when $r\ge m+1$.
\end{proposition}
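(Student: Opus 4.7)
The plan is to establish \eqref{eq:recurrent relation for c} and \eqref{eq:recurrent relation for c'} by two entirely separate arguments: the first via the functional identity \eqref{eq:relation 1} already established for $I_r$, and the second by direct algebraic manipulation of the defining formula on the right-hand side of \eqref{eq:relation among MPL}.

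For \eqref{eq:recurrent relation for c}, I would multiply \eqref{eq:relation 1} through by $(a+|\bm{\omega}|)^x$. Since $(a+\omega_i)+|\bm{\omega}_{R\setminus\{i\}}|=a+|\bm{\omega}|$, each summand on the right is precisely the normalized quantity appearing on the left-hand side of Theorem \ref{power series for I}, so the resulting identity
\[
(a+|\bm{\omega}|)^x I_r(x;\bm{\omega},a)=\frac{1}{x}\sum_{i=1}^r (a+|\bm{\omega}|)^x I_{r-1}(x;\bm{\omega}_{R\setminus\{i\}},a+\omega_i)+a(a+|\bm{\omega}|)^x I_r(x+1;\bm{\omega},a)
\]
admits term-by-term comparison via \eqref{eq:power series for I}. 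The $x^{-r}$ coefficients cancel automatically, as both sides contribute $r!$. The last summand $a(a+|\bm{\omega}|)^x I_r(x+1;\bm{\omega},a)$ is analytic in $x$ at $x=0$, because the integral $I_r(x+1;\bm{\omega},a)$ converges by Proposition \ref{convergence for I}; hence its Taylor expansion carries only non-negative powers of $x$. Consequently, for each $m$ in the range $1\leq m\leq r-1$ (equivalently $r\geq m+1$), the coefficient of $x^{m-r}$ reduces to exactly \eqref{eq:recurrent relation for c}.

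For \eqref{eq:recurrent relation for c'}, I would proceed purely algebraically with the explicit formula defining $c'_{r,m}$. Using again $(a+\omega_i)+|\bm{\omega}_{R\setminus\{i\}}|=a+|\bm{\omega}|$, the argument inside $\Lambda_{m-k}$ for $c'_{r-1,m}(\bm{\omega}_{R\setminus\{i\}},a+\omega_i)$ simplifies to $\bm{\omega}_{R\setminus\{i\}}/(a+|\bm{\omega}|)$, and an elementary double-counting gives
\[
\sum_{i=1}^r \Lambda_{m-k}\biggl(\frac{\bm{\omega}_{R\setminus\{i\}}}{a+|\bm{\omega}|}\biggr)=(r-m+k)\,\Lambda_{m-k}\biggl(\frac{\bm{\omega}}{a+|\bm{\omega}|}\biggr),
\]
since every $(m-k)$-subset $I\subset R$ is contained in $R\setminus\{i\}$ for exactly $r-(m-k)$ choices of $i$. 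Combined with the identity $(r-1-m+k)!\cdot(r-m+k)=(r-m+k)!$, this collapses $\sum_{i=1}^r c'_{r-1,m}(\bm{\omega}_{R\setminus\{i\}},a+\omega_i)$ directly to $c'_{r,m}(\bm{\omega},a)$. The hypothesis $r\geq m+1$ is used only to guarantee that $(r-1-m+k)!$ is defined for every $0\leq k\leq m$.

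I do not foresee any significant obstacle: the argument for $c'_{r,m}$ is pure combinatorial bookkeeping, while the one for $c_{r,m}$ rests on the mild analytic observation that $(a+|\bm{\omega}|)^x I_r(x+1;\bm{\omega},a)$ is holomorphic at $x=0$. The only delicate point is to make this analyticity rigorous, which can be done by differentiating under the integral sign and invoking dominated convergence, ensuring that no negative power of $x$ contaminates the Taylor expansion of the last summand.
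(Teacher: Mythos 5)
Your proposal is correct, and for \eqref{eq:recurrent relation for c} it takes a genuinely different route from the paper. The paper proves \eqref{eq:recurrent relation for c} by direct combinatorial manipulation of the defining expression \eqref{eq:def of c}: summing over $i$ and over disjoint subsets $K_1^i,\ldots,K_s^i$ of $R\setminus\{i\}$, absorbing $\{i\}$ into $K_0$, and counting that each configuration $(K_1,\ldots,K_s)$ in $R$ arises exactly $r-t$ times, which converts the factor $(r-1-t)!$ into $(r-t)!$. You instead deduce the recursion for the coefficients from the functional equation \eqref{eq:relation 1} for the integrals, normalized by $(a+|\bm{\omega}|)^x$ and expanded via Theorem \ref{power series for I}, using that the term $a(a+|\bm{\omega}|)^x I_r(x+1;\bm{\omega},a)$ contributes no negative powers of $x$. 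This is shorter and more conceptual (the coefficient recursion is inherited from the recursion of the generating functions), and it is legitimately independent of \eqref{eq:relation among MPL}, which is all the proposition requires; the small price is that it leans on Theorem \ref{power series for I} and on the regularity of $I_r(x+1;\bm{\omega},a)$ at $x=0$ (for which boundedness as $x\to 0^+$ already suffices, since the finitely many negative-power coefficients of the difference must then vanish), whereas the paper's argument is a self-contained identity between the explicit linear combinations of multiple polylogarithms. Your proof of \eqref{eq:recurrent relation for c'} via the double-counting identity $\sum_{i=1}^r \Lambda_{m-k}\bigl(\bm{\omega}_{R\setminus\{i\}}/(a+|\bm{\omega}|)\bigr)=(r-m+k)\Lambda_{m-k}\bigl(\bm{\omega}/(a+|\bm{\omega}|)\bigr)$ and the factorial identity $(r-1-m+k)!\,(r-m+k)=(r-m+k)!$ is exactly the paper's argument, including the observation that $r\ge m+1$ is needed so that $(r-1-m+k)!$ makes sense at $k=0$.
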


\begin{proof}
To prove \eqref{eq:recurrent relation for c},
we evaluate its right-hand side by using \eqref{eq:def of c}.
Then we encounter the sum
\[
\sum_{i=1}^r \sum_{K_1^i,\ldots,K_s^i}
\Li_{\bm{k}+\bm{l}}
\biggl(
\dfrac{a+|\bm{\omega}_{K_0^i\sqcup \{i\}}|}{a+|\bm{\omega}_{(K_0^i\sqcup\{i\})\sqcup K_1^i}|},
\ldots,\dfrac{a+|\bm{\omega}_{(K_0^i\sqcup \{i\})\sqcup\cdots \sqcup K_{s-1}^i}|}{a+|\bm{\omega}|}
\biggr)
\]
for $\bm{k}\in (\Zb_{\ge 1})^s$ with $k_1+\cdots +k_s=t$
and $\bm{l}\in (\Zb_{\ge 0})^s$ with $l_1+\cdots +l_s=m-t$,
where $K_1^i,\ldots,K_s^i$ are disjoint subsets of $R\setminus \{i\}$
with $|K_j^i|=k_j$
and $K_0^i=R\setminus (\{i\}\sqcup \bigsqcup_{j=1}^s K_j^i)$ for each $i$.
This sum can be transformed as
\[
\sum_{K_1,\ldots,K_s}
\Li_{\bm{k}+\bm{l}}
\biggl(
\dfrac{a+|\bm{\omega}_{K_0}|}{a+|\bm{\omega}_{K_0\sqcup K_1}|},
\ldots,\dfrac{a+|\bm{\omega}_{K_0 \sqcup\cdots \sqcup K_{s-1}}|}{a+|\bm{\omega}|}
\biggr)
\sum_{\substack{1\le i\le r\\i\not\in K_1\sqcup \cdots \sqcup K_s}} 1,
\]
where $K_1,\ldots,K_s$ are disjoint subsets of $R$
with $|K_j|=k_j$
and $K_0=R\setminus \bigsqcup_{j=1}^s K_j$.
Since $|K_1\sqcup \cdots \sqcup K_s|=k_1+\cdots +k_s=t$,
the inner sum coincides with $r-t$.
We can easily check that \eqref{eq:recurrent relation for c} follows from this transformation.

In a similar fashion,
\eqref{eq:recurrent relation for c'} can be seen
by the definition of $c_{r,m}'(\bm{\omega},a)$ and the following transformation:
\begin{align*}
\sum_{i=1}^r \Lambda_{m-k}\biggl(\dfrac{\bm{\omega}_{R\setminus\{i\}}}{a+|\bm{\omega}|}\biggr)
&=
\sum_{i=1}^r \sum_{\substack{J\subset R\setminus\{i\} \\ |J|=m-k}}
\prod_{j\in J}\log \dfrac{\omega_j}{a+|\bm{\omega}|}\\
&=
\sum_{\substack{J\subset R \\ |J|=m-k}}
\biggl(\prod_{j\in J}\log \dfrac{\omega_j}{a+|\bm{\omega}|}\biggr)
\sum_{\substack{1\le i\le r\\ i\not\in J}} 1\\
&=
(r-m+k)
\Lambda_{m-k}\biggl(\dfrac{\bm{\omega}}{a+|\bm{\omega}|}\biggr).
\qedhere
\end{align*}
\end{proof}
\par
Next, we present the proof of Theorem \ref{v9 relation between coefficients c and d}.
\begin{proof}[Proof of Theorem \ref{v9 relation between coefficients c and d}]
	Using Theorem \ref{power series for I}, we have
	\begin{align*}
		\lefteqn{a^x I_r(x;\bm{\omega},a)}
		\quad & \\
		&=\biggl(\dfrac{a}{a+|\bm{\omega}|}\biggr)^x
		(a+|\bm{\omega}|)^x I_r(x;\bm{\omega},a)\\
		&=
		\biggl(\sum_{m=0}^\infty
		\biggl(\log \dfrac{a}{a+|\bm{\omega}|}\biggr)^m
		\dfrac{x^m}{m!}\biggr)
		\biggl(r!x^{-r}+\sum_{k=1}^\infty c_{r,k}(\bm{\omega},a)x^{k-r}\biggr)\\
		&=r!\sum_{m=0}^\infty
		\biggl(\log \dfrac{a}{a+|\bm{\omega}|}\biggr)^m
		\dfrac{x^{m-r}}{m!}
		+\sum_{m=1}^\infty
		\biggl(\sum_{k=1}^m \biggl(\log \dfrac{a}{a+|\bm{\omega}|}\biggr)^{m-k}
		\dfrac{c_{r,k}(\bm{\omega},a)}{(m-k)!}\biggr)
		x^{m-r}.
	\end{align*}
	Then identity \eqref{v9 d in terms c} follows from Theorem \ref{v9 first complete asymptotic expansion of integral}.
	\vspace{2mm}\\
	Similarly with the help of Theorem  \ref{v9 first complete asymptotic expansion of integral}, we express
	\begin{align*}
		\lefteqn{(a+|\bm{\omega}|)^x I_r(x;\bm{\omega},a)}
		\quad & \\
		&=\biggl(\dfrac{a+|\bm{\omega}|}{a}\biggr)^{x} a^x I_r(x;\bm{\omega},a)\\
		&=
		\biggl(\sum_{m=0}^\infty
		\biggl(\log \dfrac{a+|\bm{\omega}|}{a}\biggr)^m
		\dfrac{x^m}{m!}\biggr)
		\biggl(r!x^{-r}+\sum_{k=1}^\infty d_{r,k}(\bm{\omega},a)x^{k-r}\biggr)\\
		&=r!\sum_{m=0}^\infty
		\biggl(\log \dfrac{a+|\bm{\omega}|}{a}\biggr)^m
		\dfrac{x^{m-r}}{m!}
		+\sum_{m=1}^\infty
		\biggl(\sum_{k=1}^m \biggl(\log \dfrac{a+|\bm{\omega}|}{a}\biggr)^{m-k}
		\dfrac{d_{r,k}(\bm{\omega},a)}{(m-k)!}\biggr)
		x^{m-r},
	\end{align*}
and then apply Theorem \ref{power series for I} to get \eqref{v9 c in terms d}.
	\end{proof}

We display some examples of \eqref{eq:relation among MPL} for $r=m$.
\begin{example}
$(1)$ The case $r=m=1$:
\eqref{eq:relation among MPL} is the trivial relation
\[
\Li_1\biggl(\dfrac{a}{a+\omega}\biggr)
=-\log \dfrac{\omega}{a+\omega}.
\]

$(2)$ The case $r=m=2$:
\eqref{eq:relation among MPL} agrees with \eqref{eq:example for r=m=2}.

$(3)$ The case $r=m=3$:
\eqref{eq:relation among MPL} can be expressed as
\begin{align*}
&2\sum_{i=1,2,3}
\Li_3\biggl(\dfrac{a+|\bm{\omega}|-\omega_i}{a+|\bm{\omega}|}\biggr)
-4\sum_{i=1,2,3} \Li_3\biggl(\dfrac{a+\omega_i}{a+|\bm{\omega}|}\biggr)\\[5pt]
&\quad
-\sum_{\sigma\in \mathfrak{S}_3}
\Li_{1,2}\biggl(\dfrac{a+\omega_{\sigma(1)}}{a+\omega_{\sigma(1)}+\omega_{\sigma(2)}},
\dfrac{a+\omega_{\sigma(1)}+\omega_{\sigma(2)}}{a+|\bm{\omega}|}\biggr)\\[5pt]
&\quad
-\sum_{\sigma\in \mathfrak{S}_3}
\Li_{2,1}\biggl(\dfrac{a+\omega_{\sigma(1)}}{a+\omega_{\sigma(1)}+\omega_{\sigma(2)}},
\dfrac{a+\omega_{\sigma(1)}+\omega_{\sigma(2)}}{a+|\bm{\omega}|}\biggr)\\[5pt]
&\quad
+6\Li_3\biggl(\dfrac{a}{a+|\bm{\omega}|}\biggr)
+2\sum_{i=1,2,3}
\Li_{1,2}\biggl(\dfrac{a}{a+\omega_i},\dfrac{a+\omega_i}{a+|\bm{\omega}|}\biggr)\\
&\quad
+2\sum_{i=1,2,3}
\Li_{2,1}\biggl(\dfrac{a}{a+|\bm{\omega}|-\omega_i},\dfrac{a+|\bm{\omega}|-\omega_i}{a+|\bm{\omega}|}\biggr)\\[5pt]
&\quad
+\sum_{\sigma \in \mathfrak{S}_3}
\Li_{1,1,1}\biggl(
\dfrac{a}{a+\omega_{\sigma(1)}},
\dfrac{a+\omega_{\sigma(1)}}{a+\omega_{\sigma(1)}+\omega_{\sigma(2)}},
\dfrac{a+\omega_{\sigma(1)}+\omega_{\sigma(2)}}{a+|\bm{\omega}|}\biggr)\\
&=
-\log \dfrac{\omega_1}{a+|\bm{\omega}|}
\log \dfrac{\omega_2}{a+|\bm{\omega}|}
\log \dfrac{\omega_3}{a+|\bm{\omega}|}
+\dfrac{\pi^2}{6}
\log \dfrac{\omega_1\omega_2\omega_3}
{(a+|\bm{\omega}|)^3}
+2\zeta(3),
\end{align*}
where $\mathfrak{S}_3$ is the symmetric group of degree three.
\end{example}


Now we demonstrate the applications of  Theorem \ref{v9 relation between coefficients c and d}. 

\begin{example}
$(1)$ The case $r=1$:
the corresponding expressions agree with those in Corollary \ref{cor:1,...,1,2}.

$(2)$ The case $r=2$, $m=2$:
\eqref{v9 c in terms d} reads
\begin{align*}
&-\Li_2 \Bigl(\dfrac{a+\omega_1}{a+\omega_1+\omega_2}\Bigr)
-\Li_2 \Bigl(\dfrac{a+\omega_2}{a+\omega_1+\omega_2}\Bigr)
+2\Li_2 \Bigl(\dfrac{a}{a+\omega_1+\omega_2}\Bigr)\\
&
+\Li_{1,1}\Bigl(\dfrac{a}{a+\omega_1},\dfrac{a+\omega_1}{a+\omega_1+\omega_2}\Bigr)
+\Li_{1,1}\Bigl(\dfrac{a}{a+\omega_2},\dfrac{a+\omega_2}{a+\omega_1+\omega_2}\Bigr)\\
&=
\Bigl(\log \dfrac{a+\omega_1+\omega_2}{a}\Bigr)^2
-\Bigl(\log \dfrac{a+\omega_1+\omega_2}{a}\Bigr) \log \dfrac{\omega_1\omega_2}{a^2}
+\log \dfrac{\omega_1}{a} \log \dfrac{\omega_2}{a} -\dfrac{\pi^2}{6}.
\end{align*}
This right-hand side can be simplified to
\[
\log \dfrac{\omega_1}{a+\omega_1+\omega_2} \cdot \log \dfrac{\omega_2}{a+\omega_1+\omega_2} -\dfrac{\pi^2}{6}.
\]
Hence this relation coincides with \eqref{eq:example for r=m=2}.

$(3)$ The case $r=2$, $m=3$:
\eqref{v9 c in terms d} reads
\begin{align*}
&\Li_3\Bigl(\dfrac{a+\omega_1}{a+\omega_1+\omega_2}\Bigr)
+\Li_3\Bigl(\dfrac{a+\omega_2}{a+\omega_1+\omega_2}\Bigr)
-4\Li_3\Bigl(\dfrac{a}{a+\omega_1+\omega_2}\Bigr)\\
& - \Li_{1,2} \Bigl(\dfrac{a}{a+\omega_1},\dfrac{a+\omega_1}{a+\omega_1+\omega_2}\Bigr)
-\Li_{1,2} \Bigl(\dfrac{a}{a+\omega_2},\dfrac{a+\omega_2}{a+\omega_1+\omega_2}\Bigr)\\
& - \Li_{2,1} \Bigl(\dfrac{a}{a+\omega_1},\dfrac{a+\omega_1}{a+\omega_1+\omega_2}\Bigr)
-\Li_{2,1} \Bigl(\dfrac{a}{a+\omega_2},\dfrac{a+\omega_2}{a+\omega_1+\omega_2}\Bigr)\\
&=\dfrac{1}{3} \Bigl(\log \dfrac{a+\omega_1+\omega_2}{a}\Bigr)^3
-\dfrac{1}{2} \Bigl(\log \dfrac{a+\omega_1+\omega_2}{a}\Bigr)^2 \log \dfrac{\omega_1\omega_2}{a}\\
&\quad
+\Bigl(\log \dfrac{a+\omega_1+\omega_2}{a}\Bigr)
\Bigl(\log \dfrac{\omega_1}{a}\log \dfrac{\omega_2}{a}-\dfrac{\pi^2}{6}\Bigr)\\
&\quad
+\dfrac{\pi^2}{6} \log \dfrac{\omega_1\omega_2}{a^2}
+T_{2,1}\biggl(\Bigl(-\dfrac{\omega_1}{a},-\dfrac{\omega_2}{a}\Bigr)\biggr)\\
&\quad
+\log \dfrac{\omega_1}{a} \Li_2\Bigl(-\dfrac{\omega_2}{a}\Bigr)
+\log \dfrac{\omega_2}{a} \Li_2\Bigl(-\dfrac{\omega_1}{a}\Bigr)
+\Li_{1,2}\Bigl(-\dfrac{\omega_1}{a}\Bigr)
+\Li_{1,2}\Bigl(-\dfrac{\omega_2}{a}\Bigr).
\end{align*}
\end{example}

We obtain various formulas among polylogarithms in this section, but it is not clear
whether those formulas are really new, or essentially already known.    In references 
we cannot find
the general form of Theorem \ref{relation among MPL}, so probably it is new.   However, 
we can see that some special cases of Theorem \ref{relation among MPL} can indeed be deduced
from known formulas.   We conclude the present paper with giving a sketch of such deduction of 
\eqref{eq:example for r=m=2} and of \eqref{cor-1-1}.

First we deduce \eqref{eq:example for r=m=2} from the following three formulas.
We have
\begin{equation}\label{eq:A}
\Li_{1,1}(x,y)
=\Li_2\!\left(\frac{y(1-x)}{y-1}\right)-\Li_2\!\left(\frac{y}{y-1}\right)-\Li_2(xy)
\end{equation}
for $|x|,|y|<1$,
\begin{equation}\label{eq:B}
\Li_2\!\left(\frac{z}{z-1}\right)=-\Li_2(z)-\frac12\log^2(1-z)
\end{equation}
for $z<1$,
and 
\begin{equation}\label{eq:C}
\Li_2(-z)+\Li_2\!\left(-\frac1z\right)=-\frac{\pi^2}{6}-\frac12\log^2 z
\end{equation}
for $z>0$.
Here, the second formula is \cite[(1.12)]{Lewin}, and the third formula is \cite[(1.7)]{Lewin}
(with the fact $\Li_2(-1)=-\pi^2/12$).
To prove the first formula \eqref{eq:A}, define
\[
F(x)
:=
\Li_{1,1}(x,y)
-
\Li_2\!\left(\frac{y(1-x)}{y-1}\right)
+
\Li_2\!\left(\frac{y}{y-1}\right)
+
\Li_2(xy)
\]
and compute $\partial F(x)/\partial x$.
Since
\begin{align*}
\frac{\partial}{\partial x}\Li_{1,1}(x,y)
&=
\sum_{n=2}^\infty \frac{y^n}{n}\sum_{m=1}^{n-1}x^{m-1}
=
\sum_{n=2}^\infty \frac{y^n}{n}\frac{1-x^{n-1}}{1-x}\\
&=
\frac{1}{1-x}
\left(
\frac{\log(1-xy)}{x}
-
\log(1-y)
\right),
\end{align*}
\[
\frac{\partial}{\partial x}\Li_2(xy)
=
-\frac{\log(1-xy)}{x},
\]
and
\[
\frac{\partial}{\partial x}\Li_2\left( \frac{y(1-x)}{y-1}\right)
=
\frac{1}{1-x}\bigl(\log(1-xy)-\log(1-y)\bigr),
\]
we find that
$\partial F(x)/\partial x=0$
for $|x|<1$.
Thus $F(x)$ is a constant function with respect to $x$. 
Evaluating at $x=0$, we obtain
\[
F(0)
=
\Li_{1,1}(0,y)
-
\Li_2\!\left(\frac{y}{y-1}\right)
+
\Li_2\!\left(\frac{y}{y-1}\right)
+
\Li_2(0)
=
0
\]
for any $y$.
Hence $F(x)\equiv 0$ for $|x|<1$, which proves \eqref{eq:A}.

Set $S:=a+\omega_1+\omega_2$ and
\[
x_1:=\frac{a}{a+\omega_1},\quad y_1:=\frac{a+\omega_1}{S},\qquad
x_2:=\frac{a}{a+\omega_2},\quad y_2:=\frac{a+\omega_2}{S}.
\]
Since
\[
\frac{y_1(1-x_1)}{y_1-1}
=\frac{\frac{a+\omega_1}{S}\cdot\frac{\omega_1}{a+\omega_1}}{-\omega_2/S}
=-\frac{\omega_1}{\omega_2},
\qquad
\frac{y_1}{y_1-1}=-\frac{a+\omega_1}{\omega_2},
\]
and $x_1y_1=a/S$, \eqref{eq:A} gives
\[
\Li_{1,1}(x_1,y_1)
=\Li_2\!\left(-\frac{\omega_1}{\omega_2}\right)
-\Li_2\!\left(-\frac{a+\omega_1}{\omega_2}\right)
-\Li_2\!\left(\frac{a}{S}\right),
\]
and similarly,
\[
\Li_{1,1}(x_2,y_2)
=\Li_2\!\left(-\frac{\omega_2}{\omega_1}\right)
-\Li_2\!\left(-\frac{a+\omega_2}{\omega_1}\right)
-\Li_2\!\left(\frac{a}{S}\right).
\]
Therefore the left-hand side of \eqref{eq:example for r=m=2} is
\begin{align}\label{AAA}
=-\Li_2(y_1)-\Li_2(y_2)
+\Li_2\!\left(-\frac{\omega_1}{\omega_2}\right)+\Li_2\!\left(-\frac{\omega_2}{\omega_1}\right)
-\Li_2\!\left(-\frac{a+\omega_1}{\omega_2}\right)-\Li_2\!\left(-\frac{a+\omega_2}{\omega_1}\right),
\end{align}
because the $\Li_2(a/S)$ terms cancel.

Next, since
\[
\frac{y_1}{y_1-1}=-\frac{a+\omega_1}{\omega_2},\qquad
\frac{y_2}{y_2-1}=-\frac{a+\omega_2}{\omega_1},
\]
from \eqref{eq:B} with $z=y_1$ and $z=y_2$ we find
\[
-\Li_2\!\left(-\frac{a+\omega_1}{\omega_2}\right)=\Li_2(y_1)+\frac12\log^2(1-y_1),
\quad
-\Li_2\!\left(-\frac{a+\omega_2}{\omega_1}\right)=\Li_2(y_2)+\frac12\log^2(1-y_2),
\]
and we see that
\[
1-y_1=1-\frac{a+\omega_1}{S}=\frac{\omega_2}{S},\qquad
1-y_2=1-\frac{a+\omega_2}{S}=\frac{\omega_1}{S}.
\]
Substituting these formulas into \eqref{AAA}, we obtain that the left-hand side of
\eqref{eq:example for r=m=2} is
\begin{align}\label{BBB}
=\Li_2\!\left(-\frac{\omega_1}{\omega_2}\right)+\Li_2\!\left(-\frac{\omega_2}{\omega_1}\right)
+\frac12\log^2\!\left(\frac{\omega_2}{S}\right)+\frac12\log^2\!\left(\frac{\omega_1}{S}\right).
\end{align}
Further, \eqref{eq:C} with $z=\omega_1/\omega_2$ implies
\[
\Li_2\!\left(-\frac{\omega_1}{\omega_2}\right)+\Li_2\!\left(-\frac{\omega_2}{\omega_1}\right)
=-\frac{\pi^2}{6}-\frac12\log^2\!\left(\frac{\omega_1}{\omega_2}\right).
\]
and so, \eqref{BBB} is 
\begin{align*}
=-\frac{\pi^2}{6}
-\frac12\log^2\!\left(\frac{\omega_1}{\omega_2}\right)
+\frac12\log^2\!\left(\frac{\omega_1}{S}\right)+\frac12\log^2\!\left(\frac{\omega_2}{S}\right)
=\log(\omega_1/S)\log(\omega_2/S)-\pi^2/6
\end{align*}
which is exactly the right-hand side of \eqref{eq:example for r=m=2}.

Finally we show that \eqref{cor-1-1} can be easily deduced from known formulas, proved in
Kaneko and Tsumura \cite{KanekoTsumura}.

In fact, the formula with $k=2$ in \cite[Remark 3.7]{KanekoTsumura} is
\begin{align}
\Li_{\underbrace{\mbox{\scriptsize $1,\ldots,1$}}_{r-1},2}(1-z)
=
-\!\!\sum_{\substack{j_1+j_2=r+2\\ j_1,j_2\ge 1}}
\Li_{\underbrace{\mbox{\scriptsize $1,\ldots,1$}}_{j_2-1}}(1-z)\,\Li_{j_1}(z)
+\z(\onen{r-1},2)
\label{KT}
\end{align}
for $0<z<1$ and $r\in\mathbb{Z}_{\ge 1}$.
Noting
\[
\Li_{\underbrace{\mbox{\scriptsize $1,\ldots,1$}}_{j_2-1}}(w)=\frac{(-\log(1-w))^{j_2-1}}{(j_2-1)!}
\]
(\cite[Lemma 1 (ii)]{AK99}),
and $\z(\onen{r-1},2)=\zeta(r+1)$ (an instance of the duality), we can rewrite \eqref{KT} as
\begin{align}\label{CCC}
\Li_{\underbrace{\mbox{\scriptsize $1,\ldots,1$}}_{r-1},2}(1-z)
=
-\sum_{j=0}^{r}\frac{(-\log z)^{r-j}}{(r-j)!}\,\Li_{j+1}(z)
+\z(r+1).
\end{align}
Moreover, setting $z=1/(x+1)$ we see that \eqref{CCC} can be written as
\begin{align*}
\label{KT2}
&\Li_{\underbrace{\mbox{\scriptsize $1,\ldots,1$}}_{r-1},2}\biggl(\dfrac{x}{x+1}\biggr)\\
&=
(-1)^r \biggl(
\sum_{j=0}^{r}\frac{(-1)^{j+1}}{(r-j)!} \biggl(\log \dfrac{1}{x+1}\biggr)^{r-j}
\Li_{j+1}\biggl(\dfrac{1}{x+1}\biggr)
+(-1)^r \z(r+1)\biggr).
\end{align*}
Therefore \eqref{cor-1-1} can be deduced from \eqref{CCC} if the identity
\begin{equation}
\label{T}
\Li_{\underbrace{\mbox{\scriptsize $1,\ldots,1$}}_{r-1},2}\biggl(\dfrac{x}{1+x}\biggr)
=
(-1)^r\Li_{\underbrace{\mbox{\scriptsize $1,\ldots,1$}}_{r-1},2}(-x)
-\frac{(\log (1+x))^{r+1}}{(r+1)!}
\end{equation}
holds.
But this coincides with the formula in \cite[Lemma 3.1]{KanekoTsumura}
with $\mathbf{k}=(\underbrace{1,\ldots,1}_{r-1},2)$ and $z=-x$.
Therefore, \eqref{cor-1-1} follows from Lemma 3.1 and Remark 3.7 in \cite{KanekoTsumura}.


\section*{acknowledgement}
The authors would like to express their sincere gratitude to Professor A. Dixit for bringing the paper \cite{DSS24} to their attention, and to Professor Masanobu Kaneko for valuable comments.

\end{document}